\theoremstyle{plain}
\newtheorem{theorem}{Theorem}[section]
\crefname{theorem}{Theorem}{Theorems}
\Crefname{theorem}{Theorem}{Theorems}
\newtheorem*{lemma*}{Lemma}
\newtheorem{lemma}[theorem]{Lemma}
\crefname{lemma}{Lemma}{Lemmas}
\Crefname{lemma}{Lemma}{Lemmas}
\newtheorem*{claim*}{Claim}
\crefname{claim}{Claim}{Claims}
\Crefname{claim}{Claim}{Claims}
\newtheorem{proposition}[theorem]{Proposition}
\crefname{proposition}{Proposition}{Propositions}
\Crefname{proposition}{Proposition}{Propositions}
\newtheorem{corollary}[theorem]{Corollary}
\crefname{corollary}{Corollary}{Corollaries}
\Crefname{corollary}{Corollary}{Corollaries}
\crefname{conjecture}{Conjecture}{Conjectures}
\Crefname{conjecture}{Conjecture}{Conjectures}
\newtheorem{question}[theorem]{Question}
\crefname{question}{Question}{Questions}
\Crefname{question}{Question}{Questions}
\crefname{observation}{Observation}{Observations}
\Crefname{observation}{Observation}{Observations}
\crefname{example}{Example}{Examples}
\Crefname{example}{Example}{Examples}
\theoremstyle{definition}
\crefname{problem}{Problem}{Problems}
\Crefname{problem}{Problem}{Problems}
\newtheorem{definition}[theorem]{Definition}
\crefname{definition}{Definition}{Definitions}
\Crefname{definition}{Definition}{Definitions}
\xpatchcmd{\proof}{\itshape}{\normalfont\proofnamefont}{}{}
\newcommand{\proofnamefont}{}
\renewcommand{\proofnamefont}{\bfseries}
\newcommand{\remove}[1]{}
\newcommand{\ceil}[1]{
    \lceil #1 \rceil
}
\newcommand{\floor}[1]{
    \lfloor #1 \rfloor
}
\newcommand{\al}{\alpha}
\newcommand{\be}{\beta}
\DeclareMathOperator{\ex}{ex}
\newcommand{\C}{\mathcal{C}}
\newcommand{\HH}{\mathcal{H}}
\newcommand{\D}{\mathcal{D}}
\newcommand{\F}{\mathcal{F}}
\newcommand{\G}{\mathcal{G}}
\newcommand{\I}{\mathcal{I}}
\newcommand{\cL}{\mathcal{L}}
\newcommand{\Pa}{\mathcal{P}}
\newcommand{\T}{\mathcal{T}}
\newcommand{\RR}{\mathbb{R}}
\newcommand{\FF}{\mathbb{F}}
\newcommand{\EE}{\mathbb{E}}
\newcommand{\PP}{\mathbb{P}}
\providecommand{\keywords}[1]
{
  \textbf{\textit{Keywords:}} #1
}
\providecommand{\MSC}[1]
{
  \textbf{\textit{2020 MSC Codes:}} #1
}
\newcommand{\ve}{\varepsilon}
\title{Regularization and asymmetric extremal numbers of subdivisions}
\author{Tao Jiang \footnote{Dept. of Mathematics, Miami University, Oxford, OH 45056, USA, {\tt jiangt@miamioh.edu}  }\and Sean Longbrake \footnote{Dept. of Mathematics, Emory University,  Atlanta, GA 30322, USA {\tt sean.longbrake@emory.edu}} }
\begin{document}

\maketitle

\begin{abstract}
Given a real $\mu\geq 1$, a graph $H$ is {\it $\mu$-almost-regular} if $\Delta(H)\leq \mu \delta(H)$.
The celebrated regularization theorem of Erd\H{o}s and Simonovits \cite{ES} states that for every real $0<\ve<1$ there exists a real $\mu=\mu(\ve)$ such that every $n$-vertex graph $G$ with $\Omega(n^{1+\ve})$ edges contains an $m$-vertex $\mu$-almost-regular subgraph $H$ with $\Omega(m^{1+\ve})$ edges for some $n^{\ve\frac{1-\ve}{1+\ve}}\leq m\leq n$.
We develop an enhanced version of it in which the subgraph $H$ also has average degree at least 
$\Omega(\frac{d(G)}{\log n})$, where $d(G)$ is the average degree of $G$. We then give a bipartite analogue of the enhanced regularization theorem. 

Using the bipartite regularization theorem, we show that
the maximum number of edges in a bipartite graph
with part sizes $m$ and $n$ that does not contain a $2k$-subdivision of $K_{s,t}$  or a $2k$-multi-subdivision of $K_p$ is no more than $O(m^{\frac{1}{2}+\frac{1}{2k}-\frac{1}{2ks}} n^{\frac{1}{2}}+n\log m)$ and $O(m^{\frac{1}{2}+\frac{1}{2k}} n^{\frac{1}{2}}+n\log m)$, respectively. These results extend the corresponding work of Janzer in \cite{Janzer-bipartite}  and \cite{Janzer-longer} 
to the bipartite setting for even subdivisions.  
We also show these upper bounds are tight up to a constant factor for infinitely many pairs $(m,n)$. The problem for estimating the maximum number of edges in a bipartite graph with part sizes $m$ and $n$ that does not contain a $(2k+1)$-subdivision of $K_{s,t}$ remains open.

\end{abstract}

\keywords{Tur\'an numbers, subdivisions, regular subgraphs}

\MSC{05C35}

\section{Introduction}

Given a graph $H$, a graph $G$ is {\it $H$-free} if it does not contain $H$ as a subgraph. As a fundamental problem in extremal graph theory, the Tur\'an problem studies the {\it extremal number} $\ex(n,H)$, defined to be the maximum number of edges in an $n$-vertex $H$-free graph. 
While the problem is essentially solved for non-bipartite graphs due to the celebrated theorems
of Erd\H{o}s and Stone \cite{E-Stone} and Erd\H{o}s and Simonovits \cite{ES}, the problem is still generally open when $H$ is bipartite. Nevertheless, there have been many promising progresses on the Tur\'an problem for bipartite graphs in recent years. One particular tool that has played an instrumental role in some of these progresses is  the following regularization theorem of Erd\H{o}s and Simonovits \cite{ES-regular}.
Roughly speaking the theorem says that a reasonably dense graph always contains a subgraph of similar {\it relative density} with the added property that the maximum and minimum degrees are within a constant factor of each other. Most existing techniques on Tur\'an type extremal problems rely on effective counting of various substrutures. By reducing the host graph to an almost regular subgraph with similar relative density, the counting tasks become much more manageable. Indeed, this is an important first step used in some of the recent progresses on the Tur\'an problem for bipartite graphs
(see \cite{CJL, Janzer-longer, Janzer-regular, Jiang-Qiu2, Sudakov-Tomon} for some examples). 
As ususal, we use $\Delta(G), \delta(G)$ to denote the maximum and minimum degree of a graph $G$, respectively.

\begin{theorem}[Erd\H{o}s-Simonovits regularization theorem \cite{ES-regular}]
Let $\ve$ be a real satisfying $0<\ve<1$. Let $n$ be a sufficiently large positive integer in terms of $\ve$. Let $G$ be an $n$-vertex graph with $e(G)\geq n^{1+\ve}$. Then $G$ contains a subgraph $H$ on 
$m\geq n^{\ve\frac{1-\ve}{1+\ve}}$ vertices such that $e(H)\geq \frac{2}{5} m^{1+\ve}$ and
$\Delta(G')\leq c_\ve\delta(G')$, where $c_\ve= 20 \cdot 2^{\frac{1}{\ve^2}+1}$.
\end{theorem}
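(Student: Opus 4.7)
My plan is to prove the theorem by iterating an Erd\H{o}s--Simonovits-style degree-regularization procedure. Let $f(H):=e(H)/v(H)^{1+\ve}$, so that the hypothesis reads $f(G)\geq 1$. I will construct a nested sequence $G=G_0\supseteq G_1\supseteq\cdots\supseteq G_T$ of subgraphs with $f(G_i)\geq\tfrac{2}{5}$ maintained throughout, and terminate at the first index $T$ for which $\Delta(G_T)\leq c_\ve\delta(G_T)$; the output is $H:=G_T$.

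Each pass from $G_i$ to $G_{i+1}$ consists of two stages. First, a \emph{cleaning} stage: repeatedly remove any vertex whose degree is below half the current average. A direct calculation shows this operation never decreases $f$, so the invariant $f\geq\tfrac{2}{5}$ is preserved, and the cleaned graph $H'$ satisfies $\delta(H')\geq\tfrac{1}{2}d(H')\geq v(H')^{\ve}$. If already $\Delta(H')\leq c_\ve\delta(H')$, I stop. Otherwise I run a \emph{dyadic concentration} stage: partition $V(H')$ into the degree classes $W_j:=\{v:2^j\delta(H')\leq\deg_{H'}(v)<2^{j+1}\delta(H')\}$. Since $\delta(H')\geq v(H')^\ve$ and $\Delta(H')\leq v(H')$, the number $N$ of nonempty classes is at most $(1-\ve)\log_2 v(H')+1$. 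By charging each edge to its endpoint in the higher-indexed class and pigeonholing, some $j^\star$ satisfies $e(H'[U])\geq e(H')/N$ for $U:=\bigcup_{j\leq j^\star}W_j$. Setting $G_{i+1}:=H'[U]$, every vertex in $G_{i+1}$ has $H'$-degree at most $2^{j^\star+1}\delta(H')$; the inequality $2e(G_{i+1})\leq v(G_{i+1})\cdot 2^{j^\star+1}\delta(H')$ simultaneously lower-bounds $v(G_{i+1})$ and bounds the per-step loss in $f$ by a factor depending only on $\ve$.

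Closing the proof then reduces to three quantitative checks: (i) the iteration terminates in at most $1/\ve^2$ passes, since the log-degree-ratio $\log_2(\Delta/\delta)$ strictly contracts each round; (ii) the accumulated size loss yields $v(G_T)\geq n^{\ve(1-\ve)/(1+\ve)}$, which falls out of solving the recursion $v_{i+1}\gtrsim v_i^{\ve}/\log v_i$ over $O(1/\ve^2)$ steps; and (iii) the constant $c_\ve=20\cdot 2^{1/\ve^2+1}$ accumulates from combining the factor-$2$ degree-ratio margin per pass across $1/\ve^2$ passes with a universal slack factor of $20$ that absorbs the cleaning and concentration losses. The main obstacle is exactly this density bookkeeping: the dyadic step can lose a factor of $N=\Theta(\log v(H'))$ in $f$ in the worst case, so the logarithmic losses must be amortized against the sharp drop in vertex count in order to preserve $f(G_i)\geq\tfrac{2}{5}$ over all rounds, and it is precisely this tension that forces the exponent $\ve(1-\ve)/(1+\ve)$ in the final vertex bound.
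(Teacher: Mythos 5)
The paper does not prove this theorem; it is cited from Erd\H{o}s and Simonovits \cite{ES-regular} as background, so there is no in-paper proof to compare against. Evaluating your proposal on its own merits, I see genuine gaps that the quantitative ``bookkeeping'' you defer cannot repair without a new idea.

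The central defect is in the dyadic concentration step. You take $U:=\bigcup_{j\le j^\star}W_j$ where $j^\star$ is chosen by pigeonhole to maximize the number of edges whose higher-indexed endpoint falls in $W_{j^\star}$. This choice gives you $e(H'[U])\ge e(H')/N$, but it gives no control whatsoever over $j^\star$: if the heaviest class is the top one, $j^\star=N-1$, then $U=V(H')$, $G_{i+1}=H'$, and no progress toward regularity has been made. Consequently the claim in (i) that $\log_2(\Delta/\delta)$ ``strictly contracts each round'' is unsupported; the procedure as written can stall. (Taking a single dyadic class $W_{j^\star}$ instead would control the degree ratio, but then the edges charged to $W_{j^\star}$ need not lie inside $W_{j^\star}$, so the density bound $e(H'[W_{j^\star}])\ge e(H')/N$ fails.) A genuinely different concentration mechanism is needed --- e.g.\ the Erd\H{o}s--Simonovits device of splitting $V$ in half by a median degree and recursing on whichever half retains a constant fraction of the edges, which trades a factor-two size drop for a controlled density gain rather than a logarithmic density loss.

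The quantitative claims (ii)--(iii) also do not close. The proposed recursion $v_{i+1}\gtrsim v_i^\ve/\log v_i$ iterated over $T\approx 1/\ve^2$ rounds gives roughly $v_T\approx n^{\ve^T}=n^{\ve^{1/\ve^2}}$, which for small $\ve$ is far below the required $n^{\ve(1-\ve)/(1+\ve)}\approx n^{\ve}$. And the density amortization is not just delicate but impossible under the stated loss model: if each dyadic stage costs a multiplicative factor of $N=\Theta(\log v(H'))$ in $f$ and you start at $f=1$, then after the very first round, when $v$ is still comparable to $n$, $f$ has already dropped to $\Theta(1/\log n)\ll 2/5$. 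The cleaning stage only monotonically increases $f$ by a factor no larger than $(v_{\mathrm{before}}/v_{\mathrm{after}})^\ve$, which you have no lower bound on, so it cannot be guaranteed to repay the logarithmic loss. You would need the losses to be charged against a provable \emph{geometric} decrease in vertex count, but (as noted above) the dyadic step need not shrink $V$ at all.

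In short: the scheme as described can fail to reduce the degree ratio, loses too much density per round, and its vertex-count recursion does not yield the claimed exponent. The correct proof needs a regularization step that simultaneously loses only a bounded constant factor in relative density and halves (rather than logarithmically dilutes) the vertex set, precisely so that the total number of halvings is $O(1/\ve)$-ish and the accumulated degree-ratio bound is $2^{O(1/\ve^2)}$.
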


There have been several adaptions of the Erd\H{os}-Simonovits regularization theorem, see for instance
\cite{CJL, Jiang-Seiver}. Very recently, Chakraborti, Janzer, Methuku, and Montgomery \cite{CJMM} proved the following remarkable strengthening of the Erd\H{o}s-Simonovits regularization theorem.

\begin{theorem}[\cite{CJMM}]
For any positive real $\ve<1$, there exists some $\beta>0$ such that the following holds. Let $c>0$ and let $n$ be a positive integer that is sufficiently large as a function of $\ve$ and $c$. Let $G$ be a graph on $n$ vertices with $e(G)\geq cn^{1+\ve}$. Then $G$ contains a regular subgraph $H$ on $m\geq n^\beta$ vertices such that $e(H)\geq \beta c m^{1+\ve}$.    
\end{theorem}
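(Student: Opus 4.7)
My plan proceeds in two stages. First, apply the Erdős–Simonovits regularization theorem (Theorem~1.1) to $G$ to extract a $c_\ve$-almost-regular subgraph $G_1$ on $m_1\geq n^{\ve(1-\ve)/(1+\ve)}$ vertices with $e(G_1)\geq \tfrac{2}{5}m_1^{1+\ve}$, where $c_\ve$ depends only on $\ve$. This reduces the task to the following: from a $\mu$-almost-regular graph on $m$ vertices with $\Omega_\ve(m^{1+\ve})$ edges (with $\mu$ a constant depending only on $\ve$), extract a genuinely regular subgraph on $m^{\Omega_\ve(1)}$ vertices whose edge count still matches the exponent $\ve$ with constant $\beta c$.

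For the second stage I would iteratively tighten the degree spread by dyadic bucketing. Given a current $\mu_i$-almost-regular graph $H_i$ on $m_i$ vertices with minimum degree $\delta_i$, partition $V(H_i)$ into $\lceil\log_{1+\eta}\mu_i\rceil$ degree buckets of multiplicative width $1+\eta$, for a small $\eta=\eta(\ve)>0$ fixed later. By pigeonhole, one bucket $S$ captures at least a $1/\lceil\log_{1+\eta}\mu_i\rceil$ fraction of both the vertices and the incident edges. Restricting to the induced subgraph $H_i[S]$ and re-invoking Theorem~1.1 on it (first verifying that it still has $\Omega(|S|^{1+\ve})$ edges) yields $H_{i+1}$ which is $\mu_{i+1}$-almost-regular with $\mu_{i+1}$ strictly closer to $1+\eta$; after $O_\ve(\log\log\mu)$ rounds one arrives at a $(1+\eta)$-almost-regular graph $H_*$ on $m_*\geq m_1^{\Omega_\ve(1)}$ vertices.

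Once the degree spread is narrowed to $[\delta_*,(1+\eta)\delta_*]$, I would extract an exactly $d$-regular subgraph for a suitable $d\leq\delta_*$ via a peeling/flow argument. The idea is to process the excess degree sequentially: so long as the current degree spread exceeds $1$, pick a vertex $v$ of currently maximal degree and delete one of its edges $vu$ chosen so that $u$ also has degree exceeding $d$; since degrees start within a $(1+\eta)$ factor of one another and the total number of edges to delete is a small fraction of $e(H_*)$, a Hall-type matching argument on the ``excess degree'' multigraph shows this can be completed while losing only an $O(\eta)$ fraction of vertices and edges, leaving a $d$-regular subgraph of the required size.

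The main obstacle will be bookkeeping the cumulative losses and the dependence on $c$. Each bucketing step costs a $(\log\mu_i)^{O(1)}$ factor in the vertex count and at least this much in the edge-density constant, so one must verify that after $O_\ve(1)$ iterations the product of these losses is still absorbed into a single $\beta=\beta(\ve)>0$ in both the exponent $n^\beta$ and the leading constant $\beta c$. This in turn forces one to open the proof of Theorem~1.1 to confirm that its guarantees depend linearly on the input density constant, so that the density constant $c$ persists (up to a factor of $\beta$) all the way to $H_*$ and through the final peeling step.
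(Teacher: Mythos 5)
This is a cited theorem (from [CJMM]); the paper does not prove it, so I am comparing your sketch against the actual difficulty of the statement rather than against an in-text proof.

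There are two genuine gaps, the second of which is fatal in the sense that it is the entire content of the theorem. First, the dyadic bucketing step conflates \emph{incident} edges with \emph{induced} edges. Partitioning $V(H_i)$ into $O(\log\mu_i)$ degree classes and applying pigeonhole guarantees that some class $S$ touches a constant fraction of the edges, but it says nothing about $e(H_i[S])$: the vertices of $S$ can send essentially all of their edges to the complement, in which case $H_i[S]$ is nearly empty and re-invoking Theorem~1.1 on it is impossible. Making this step work requires a two-sided pigeonhole over pairs of buckets, which produces a bipartite graph between two degree classes rather than a graph with a narrow degree spread, so the iteration you describe does not close.

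Second, and more seriously, the final ``peeling/flow'' extraction of an exactly $d$-regular subgraph from a $(1+\eta)$-almost-regular graph is precisely the hard core of the theorem, and the argument you gesture at does not work. The local step ``pick a max-degree vertex $v$ and delete an edge $vu$ with $\deg(u)>d$'' can get stuck: $v$ may have all its neighbours already at degree $d$. More fundamentally, there is no elementary Hall-type argument that turns a graph with degrees in $[\delta,(1+\eta)\delta]$ into a $d$-regular subgraph on a positive fraction of the vertices with $d=\Theta(\delta)$ --- if there were, the Erd\H{o}s--Sauer problem (does every $n$-vertex graph with $\gg n$ edges contain a $3$-regular subgraph?) would not have remained open for fifty years before its resolution by Janzer and Sudakov, and the CJMM theorem would not be a recent and remarkable strengthening. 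The actual proof in [CJMM] builds on the Janzer--Sudakov machinery (spread measures, a sophisticated probabilistic clean-up, and a careful treatment of the expansion structure of almost-regular graphs) and cannot be replaced by a greedy degree-trimming argument. Your outline correctly identifies that one should first pass to an almost-regular subgraph via Erd\H{o}s--Simonovits, but treats the jump from almost-regular to genuinely regular as routine bookkeeping, when in fact it is the theorem.
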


Note that in both of these theorems, while the resulting (almost) regular subgraph $H$ preserves the same relative density as $G$, $v(H)$ is potentially very small compared to $v(G)$ and hence the average degree of $H$ could become quite small in the absolute sense. In some applications, it may be desirable to have an almost-regular subgraph of $G$ that not only preserves the relative density but also has average degree close to the original average degree.

We show this can be accomplished by proving the following theorem.  Given a real $\mu\geq 1$, we say that a graph $G$ is {\it $\mu$-almost-regular} if  $\Delta(G)\leq \mu \delta(G)$. We also use $d(G)$ to denote the average degree of $G$.

   \begin{theorem}[Enhanced regularization theorem]  \label{thm:enhanced-regularization}
       Let $c$ be any positive real. Let $G$ be an $n$-vertex graph with at least $cn^{1 + \ve}$ edges. Then, there is a $6$-almost-regular subgraph  $H$ on  some $m$ vertices such that
\[e(H)\geq \frac{2^\ve-1}{48}c m^{1+\ve}\, \mbox{\rm and } \,  d(H)\geq \frac{1}{12}\frac{d(G)}{\log (2n/d(G))}.\]    
   \end{theorem}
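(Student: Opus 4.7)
The plan is to combine a dyadic pigeonhole over vertex degrees with a densification/cleanup step. Set $d:=d(G)$ and $L:=\lceil\log_2(2n/d)\rceil$. First I would discard vertices of $G$-degree below $d/4$ (they contribute at most $nd/4$ to the degree sum $2e(G)=nd$) and partition the rest into $O(L)$ dyadic classes $V_i:=\{v:2^i(d/4)\leq\deg_G(v)<2^{i+1}(d/4)\}$. A pigeonhole then produces a bucket $V^*$ whose vertices all have $G$-degree in some range $[k,2k)$ with $k\geq d/4$ and $\sum_{v\in V^*}\deg_G(v)\gtrsim nd/L$.

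Next I would extract a $6$-almost-regular subgraph from the edges incident to $V^*$. Splitting $\sum_{v\in V^*}\deg_G(v)=2e_{\mathrm{in}}+e_{\mathrm{cross}}$, where $e_{\mathrm{in}}:=e(G[V^*])$ and $e_{\mathrm{cross}}:=e_G(V^*,V(G)\setminus V^*)$, I would split into two cases. In the internal-dominant case, the degree identity gives $|V^*|\leq O(e_{\mathrm{in}}/k)$, so iteratively removing vertices of $G[V^*]$-degree below roughly $k/3$ deletes only a constant fraction of $e_{\mathrm{in}}$; the resulting subgraph $H\subseteq G[V^*]$ has $\delta(H)\gtrsim k$ and $\Delta(H)\leq 2k$, hence $\Delta(H)\leq 6\delta(H)$. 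In the crossing-dominant case, I would perform the analogous cleanup on the bipartite graph between $V^*$ and $V(G)\setminus V^*$, possibly with a mild secondary dyadic refinement on the outside side to control the degree range there.

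It remains to verify the three conclusions. The $6$-almost-regularity is built into the trimming thresholds, and the bound $d(H)\geq d(G)/(12L)$ follows from $\delta(H)\gtrsim k\geq d/4$ in the internal case, with the extra $L$-factor surfacing only in the crossing case from the secondary refinement. The density bound $e(H)\geq\tfrac{2^\ve-1}{48}\,c\,m^{1+\ve}$ is the most delicate point; the factor $2^\ve-1$, being the reciprocal of the geometric sum $\sum_{j\geq 1}2^{-j\ve}$, strongly suggests that the initial pigeonhole should in fact be \emph{weighted} (for instance, by $k_i^{-\ve}$), so that the winning bucket satisfies $|V^*|\,k^{1+\ve}\gtrsim(2^\ve-1)\,cn^{1+\ve}$ and the required density on $H$ follows from $m\leq|V^*|$. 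The main obstacle will be arranging all three bounds simultaneously: the trimming must be aggressive enough to enforce $\Delta/\delta\leq 6$ while still preserving a constant fraction of $e_{\mathrm{in}}$, which constrains the bucket ratio and threshold subtly, and the weighted pigeonhole must deliver the sharper bound $(2^\ve-1)\,cn^{1+\ve}$ rather than the cruder $cn^{1+\ve}/L$. Handling the crossing case without paying an extra $\log$ factor---likely through a unified cleanup that treats internal and crossing structure together rather than separately---is a further delicate point.
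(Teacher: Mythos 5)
Your proposal takes a genuinely different route from the paper -- a dyadic bucketing of the vertex degree sequence, followed by an internal-vs-crossing case analysis -- whereas the paper iterates Pyber's half-regular matching extraction (Lemma~\ref{lem:pyber}) and then applies a dyadic/thick-thin pigeonhole to the matching \emph{sizes}, not the vertex degrees. Unfortunately your route has a real gap, which you yourself circle around at the end of the proposal.

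The crossing-dominant case is not merely ``a further delicate point'': as stated, it fails. After the pigeonhole you hold a bipartite graph $B$ between $V^*$ (vertices of $G$-degree in $[k,2k)$) and $W=V(G)\setminus V^*$, and after a secondary dyadic refinement on $W$ you get $W^*$ whose $B$-degrees lie in $[\ell,2\ell)$. But nothing forces $\ell$ to be comparable to $k$. Concretely, let $G=K_{a,b}$ with $a=n^\ve$ and $b=n-a$. Then the only nonempty buckets are the $b$ vertices of degree $a$ and the $a$ vertices of degree $b$; picking $V^*$ to be the $b$-side (a plausible outcome of your pigeonhole, since both sides carry half the degree mass), you land in the crossing case with $k\approx a$ and $\ell\approx b$, so $\Delta/\delta\approx b/a\approx n^{1-\ve}$ is enormous. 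To salvage this you would need an additional half-regularization/downsampling step that shrinks the large side to match the small one -- which is essentially Pyber's lemma or the paper's Lemma~\ref{lem:onesideregular}, i.e.\ the very tool your proposal was meant to avoid. Separately, the secondary dyadic refinement on $W$ costs another factor of $\log n$, so $d(H)$ would degrade to $d(G)/\log^2 n$ rather than $d(G)/\log(2n/d(G))$; you acknowledge this but do not resolve it, and a ``unified cleanup'' is hoped for rather than supplied.

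The density estimate is also unresolved. You correctly observe that the factor $2^\ve-1$ should come from a geometric series, but the weighted-pigeonhole sketch (weight $k_i^{-\ve}$) never yields the needed inequality $e(H)\geq \frac{2^\ve-1}{48}c\,m^{1+\ve}$ because the quantity that must be bounded is $k/m^\ve$ with $m\le |V^*|$, and nothing in the bucketing directly controls $|V^*|$ in terms of $k$. The paper sidesteps this entirely: after passing to a min-degree-$d$ subgraph with $d\approx cn^\ve$, the matchings $M_1\supseteq M_2\supseteq\cdots$ are nested in vertex set, so the final $H$ lives on $\sim|M_\ell|$ vertices with $\sim p|M_\ell|$ edges for a thick bucket of size $p$; ``thick'' is defined precisely as $p\geq \frac{2^\ve-1}{2}c(n/2^j)^\ve\geq\frac{2^\ve-1}{2}c|M_\ell|^\ve$, which makes the density bound immediate, and the $6$-almost-regularity is automatic because $H$ is a union of $p$ matchings on the same vertex set (so $\Delta\le p$ and, after a cheap trim, $\delta\ge p/6$). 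In short: the paper's dyadic decomposition lives on matching sizes inside a pre-regularized object, and that is what makes both the regularity and the density bound fall out; a dyadic decomposition on raw vertex degrees does not deliver either without reintroducing the half-regularization machinery you were trying to replace.
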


It follows from the proof of Proposition 5.2 of \cite{CJMM} that for any fixed $\mu, \ve, c$
there exists $c'>0$ such that for sufficiently large $n$ there exists an $n$-vertex graph $G$ with at least $cn^{1+\ve}$ edges and no $\mu$-almost-regular subgraph with average degree at least $c'\frac{d(G)}{\log n}$. (The same conclusion can be also be derived by combining Theorem 1.13 and Proposition 5.2 of \cite{CJMM}.)  So, the lower bound on $d(H)$ in Theorem \ref{thm:enhanced-regularization} is asymptotically best possible.

Next, we wish to develop analogues of regularization theorems in the setting where the host graph is bipartite with potentially drastically different part sizes.  Our main motivation here is that
a biregularization theorem that introduces some regularity to the host graph will aide the counting of relevant substructures, which can be useful in the study of extremal numbers in the bipartite setting.

Here, we define the asymmetric extremal number of $H$, denoted by $\ex(m,n,H)$, to be maximum number of edges in an $H$-free bipartite graph with part sizes $m,n$, respectively, where $m\leq n$. The study of $\ex(m,n,H)$ is even more complex than $\ex(n,H)$, as the behavior of the function is also affected by range of values of $m$ relative to $n$. Results on $\ex(m,n,H)$ have been relatively sporadic. See for instance \cite{ESS-product}, \cite{Gyori}, and \cite{NV} for the study of $\ex(m,n,C_{2k})$. We refer the reader to the survey of F\"uredi and Simonovits \cite{FS} for more discussion on $\ex(m,n,H)$ and the related Zarankewicz problem $z(m,n,H)$.

For the main purpose of this paper, we will use the following natural biregularity notion. In the concluding remarks, we will mention a weaker notion of biregularity that can be more useful in certain applications.

\begin{definition} [$\mu$-almost-biregularity]
    Given a positive real $\mu\geq 1$, a bipartite graph $G$ with a bipartition $(A,B)$ is called {\it $\mu$-almost-biregular} if $\Delta_A\leq \mu \delta_A$ and $\Delta_B\leq \mu \delta_B$ hold, where 
    $\Delta_A, \Delta_B$ denote the maximum degree of a vertex in $A$ and in $B$, respectively
    and $\delta_A, \delta_B$ denote the minimum degree of a vertex in $A$ and in $B$, respectively.
\end{definition}

\begin{theorem}[Biregularization theorem] \label{thm:biregular}
    Let $0<\alpha, \beta\leq 1$ be reals satisfying  $\alpha+\beta>1$.
    There exists a positive constant $\lambda=\lambda(\alpha,\beta)$
    such that the following holds. 
    Let $G$ be a bipartite graph with an ordered partition $(M, N)$, with $|M| = m$, $|N| = n$, and $m \leq n$ such that $e(G) \geq cm^{\alpha}n^{\beta}$ and $d(G)\geq 8$. Then $G$ contains a $16$-almost-biregular subgraph $G'$, with a partition $(M',N')$ where $M' \subseteq M, N' \subseteq N$, $|M'| = m', |N'| = n'$ such that  \[e(G') \geq  \lambda c (m')^{\alpha}(n')^{\beta}\, \mbox{\rm and }\,d(G') \geq \frac{1}{64}\frac{d(G)}{\log m}.\] 
\end{theorem}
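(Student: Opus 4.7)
The plan is to run the dyadic clean-up strategy from Theorem~\ref{thm:enhanced-regularization} once on each side of the bipartition. The hypothesis $d(G)\geq 8$ implies $e(G)\geq 4(m+n)\geq 4n$, so the average $N$-side degree $d_N := e(G)/n$ is at least $4$, and the range of useful $N$-degrees, namely $[d_N/4,m]$, spans only $\log_2(4m/d_N)\leq \log_2 m + O(1)$ dyadic scales. This is what should produce $\log m$ rather than $\log n$ in the final bound.

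\emph{Stage 1 (regularize the $N$-side).} Discard from $N$ every vertex of $G$-degree below $d_N/4$, which costs at most $e(G)/4$ edges, and dyadically partition the survivors into classes $N_j := \{v \in N : 2^{j-1} \leq d_G(v) < 2^j\}$. Pigeonholing over the $O(\log m)$ nonempty classes yields a single class $N^*$ contributing $\Omega(e(G)/\log m)$ edges; in $G_1 := G[M,N^*]$ the $N^*$-side is $2$-almost-regular.

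\emph{Stage 2 (regularize the $M$-side and restore the $N$-side).} Apply the analogous clean-up to the $M$-side of $G_1$: remove $M$-vertices whose $G_1$-degree is below a quarter of the $M$-side average in $G_1$, dyadically partition, and pigeonhole a class $M^*$ whose $G_1$-degrees span a factor of $2$ and capture an $\Omega(1/\log m)$-fraction of $e(G_1)$. Set $G_2 := G[M^*, N^*]$: by construction the $M^*$-side is $2$-almost-regular in $G_2$, but some $N^*$-vertex degrees may have dropped. I would finish with an alternating clean-up, iteratively deleting from each side any vertex of degree below half the current side-average and, if necessary, re-running a single dyadic pigeonhole on $N^*$. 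A standard argument shows that $O(1)$ rounds suffice, each losing at most a constant fraction of edges, producing the final $16$-almost-biregular subgraph $G'$ on $(M',N')$.

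The edge-density bound $e(G')\geq \lambda c (m')^\alpha (n')^\beta$ should follow because every dyadic restriction above preserves the relative density $e/|M|^\alpha |N|^\beta$ up to a constant and a log factor, both absorbed into $\lambda$. The step I expect to be the main obstacle is the average-degree bound $d(G')\geq \tfrac{1}{64}d(G)/\log m$: one must track the vertex-set shrinkage alongside the edge loss so that the logarithmic losses from \emph{both} stages combine into a single $\log m$ factor rather than the naive $\log^2 m$. This should work out by choosing the dyadic class in each stage to balance edge mass against vertex mass, so that the shrinkage of $m+n$ to $m'+n'$ cancels the second log factor; pinning down the precise constant $1/64$ will require some tedious book-keeping of the various clean-up rounds.
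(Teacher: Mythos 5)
Your proposal takes a genuinely different route from the paper, and I think it has a real gap. You replace both stages of the paper's machinery (first reducing to a half-regular subgraph via a minimality argument, as in Lemma~\ref{lem:onesideregular}, and then peeling off Pyber--R\"odl--Szemer\'edi roofs and bucketing them by the surviving set size $|N_i|$ with a density-tuned thick/thin threshold, as in Lemma~\ref{lem:halfreg-almreg}) with a naive dyadic bucketing of vertices by degree followed by an edge-count pigeonhole. The problem is the sentence ``every dyadic restriction above preserves the relative density $e/|M|^\alpha|N|^\beta$ up to a constant and a log factor, both absorbed into $\lambda$.'' The constant $\lambda$ is required to depend only on $\alpha,\beta$, so a $\log m$ loss cannot be absorbed into it. And the loss is real: with $N_j=\{v: 2^{j-1}\le d_G(v)<2^j\}$, one has $e(G)=\sum_j e(G[M,N_j])\le (\max_j \rho_j)\, m^\alpha\sum_j|N_j|^\beta$ where $\rho_j=e(G[M,N_j])/(m^\alpha|N_j|^\beta)$, and by H\"older $\sum_j|N_j|^\beta\le (\log m)^{1-\beta}n^\beta$, so the best density you can guarantee from a single pigeonhole is $\max_j\rho_j\ge c/(\log m)^{1-\beta}$, which tends to $0$. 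Choosing instead the class maximizing edge count doesn't help: it preserves edges up to $\log m$ but makes no guarantee on $|N_j|$, so the density can still degrade by a $\log$ factor. The paper sidesteps exactly this by bucketing a \emph{sequence} of roofs by the size of the surviving $N_i$, and by declaring a bucket ``thin'' below the threshold $\ve d/2^{(\alpha+\beta-1)j}$ that is chosen precisely so that (a) thin buckets together hold at most half the iterations, and (b) any thick bucket automatically yields the correct density with no $\log$ loss, so that the $\log m$ pigeonhole cost lands only on the average degree, where the theorem tolerates it.

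A second, smaller issue: your Stage 2 and ``alternating clean-up'' is entirely schematic. After restricting $M$ to $M^*$, the $N^*$-degrees can collapse, and it is not obvious that $O(1)$ alternating rounds recover biregularity while keeping both the density and the average-degree bound; you would at minimum need to show the degree ratios improve geometrically per round. The paper avoids this altogether: the roof construction delivers both side-degree bounds simultaneously, because a union of $p$ roofs has $N$-side degree exactly $p$ (each roof is $1$-half-regular at $N_i$) and $M$-side degree between $(p/2)\mu$ and $pt\le 2p\mu$, after which a single degree clean-up suffices.
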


Next, we apply our biregularization theorem to estimate the extremal number of subdivisions in the bipartite setting.   

 Given a graph $H$ and positive integers $k,r\geq 2$, we denote by $H^{(k)}$ the graph obtained from $H$ by replacing each edge $uv$ with a $u,v$-path $P_{u,v}$ of length $k$ so that the paths $P_{u,v}$'s are pairwise internally disjoint. We call $H^{(k)}$ a {\it simple $k$-subdivision} of $H$.
We denote by $H^{(k)}(r)$ the graph obtained from $H$ by replacing each edge $uv$ with $r$ internally
disjoint $u,v$-paths of length $k$ such that all these $re(G)$ paths are internally disjoint. We call $H^{(k)}(r)$, the {\it $r$-multi-subdivision} of $H$. The study of $\ex(n,H^{(k)})$ was initiated by Jiang and Seiver \cite{Jiang-Seiver}. Their bounds were significantly improved by Conlon, Janzer, and Lee \cite{CJL}. (see also followup work by Jiang and Qiu \cite{Jiang-Qiu}.) Janzer \cite{Janzer-bipartite, Janzer-longer} subsequently essentially settled the problems for $\ex(n, K_{s,t}^{(k)})$ and $\ex(n,K_p^{(2k)}(r))$. For $K_{s,t}^{(k)}$, it is shown \cite{Janzer-bipartite} that $\ex(n,K_{s,t}^{(k)})=O(n^{1+\frac{1}{k}-\frac{1}{sk}})$, which is asymptotically tight when $t$ is sufficiently large in terms of $s$. For $K_p^{(2k)}$, it is shown \cite{Janzer-longer} that $\ex(n, K_p^{(2k)}(r))=O(n^{1+\frac{1}{k}})$, which is asymptotically tight when $r$ is sufficiently large, and
that $\ex(n,K_p^{(2k)})=O(n^{1+\frac{1}{k}-\delta})$ for some $\delta>0$. 
We establish the bipartite analogues of the results on $\ex(n,K_{s,t}^{(k)})$ and $\ex(n,K_p^{(k)}(r))$ for even $k$ as follows.

\begin{theorem} \label{thm:main-kst}
    Let $k,r, s,t$ be positive integers. Then for all positive integers $m\leq n$,
    \[\ex(m,n, K_{s,t}^{(2k)}(r))\leq O(m^{\frac{1}{2}+\frac{1}{2k}}n^{\frac{1}{2}}+n\log m),\mbox{ and } \ex(m,n, K_{s,t}^{(2k)})\leq O(m^{\frac{1}{2}+\frac{1}{2k}-\frac{1}{2ks}} n^{\frac{1}{2}}+n\log m).\]
\end{theorem}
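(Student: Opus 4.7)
The plan is to reduce to the almost-biregular case via the biregularization theorem (\Cref{thm:biregular}), then adapt the path-counting arguments of Janzer from \cite{Janzer-bipartite, Janzer-longer} to the bipartite setting.

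Suppose $G$ is a bipartite graph with parts $(M, N)$ of sizes $m \leq n$ and $e(G) \geq C(m^{\alpha} n^{1/2} + n \log m)$ for a large constant $C = C(k,r,s,t)$, where $\alpha = 1/2 + 1/(2k)$ in the $K_{s,t}^{(2k)}(r)$-free case and $\alpha = 1/2 + 1/(2k) - 1/(2ks)$ in the $K_{s,t}^{(2k)}$-free case. The role of the $n\log m$ summand is precisely to guarantee $d(G) \geq 2 e(G)/(m+n) \geq C \log m \geq 8$, so that \Cref{thm:biregular} applies with exponents $(\alpha, 1/2)$ (admissible since $\alpha + 1/2 > 1$ in both cases). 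The output is a $16$-almost-biregular subgraph $G' = (M', N')$ of sizes $(m', n')$ with $e(G') \geq \lambda C (m')^\alpha (n')^{1/2}$ and $d(G') \geq d(G)/(64 \log m) \geq C/64$; both can be made arbitrarily large by choosing $C$ large.

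The core step is to show that such an almost-biregular $G'$ already contains the forbidden subdivision. Writing $d_M = e(G')/m'$ and $d_N = e(G')/n'$ for the two average degrees, almost-biregularity implies that the number of walks of length $2k$ with both endpoints in $M'$ is comparable to $m'(d_M d_N)^k$. Substituting the lower bound on $e(G')$ gives that the average number of walks per ordered pair in $M'$ is at least a constant multiple of $(m')^{2k\alpha - k - 1}$ --- a large constant in the first case, and decaying like $(m')^{-1/s}$ in the second. In the first (multi-subdivision) case, almost-biregularity makes non-path walks negligible; a convexity/supersaturation argument produces many pairs in $M'$ with many internally-disjoint paths of length $2k$, and a greedy or dependent-random-choice extraction yields the $s+t$ branch vertices in $M'$ together with $rst$ internally-disjoint paths forming the $K_{s,t}$ pattern. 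In the second (single-subdivision) case, the extra $1/(2ks)$ exponent savings requires the refined path-saturation argument of Janzer in \cite{Janzer-bipartite}, which counts $s$-stars of paths from each vertex in $M'$ and bounds obstructions from previously-chosen paths more tightly than a pure convexity argument would allow. Should regularization return $m' > n'$, we run the symmetric argument with the roles of $M'$ and $N'$ swapped; since $\alpha > 1/2$, the lower bound $(m')^\alpha (n')^{1/2}$ always exceeds the relevant threshold on whichever part is the smaller.

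The main technical obstacle will be the bipartite adaptation of Janzer's counting for the $K_{s,t}^{(2k)}$-free case: in the balanced setting his argument works with a single average degree, whereas here two distinct degrees $d_M \neq d_N$ and two part sizes $m' \neq n'$ must be threaded simultaneously through the iterative path-saturation and removal steps, and we must verify that the exponent savings $1/(2ks)$ attaches correctly to the smaller-side variable. All other pieces --- applying \Cref{thm:biregular}, counting walks, invoking convexity, and the first-case extraction --- are relatively routine once this core counting lemma is established.
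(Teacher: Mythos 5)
You correctly identify the paper's overall architecture: apply \Cref{thm:biregular} (through \Cref{cor:biregular}) to pass to a $16$-almost-biregular subgraph with controlled average degree (the $n\log m$ term being exactly what guarantees $d(G)\geq 8$ so the biregularization applies), then prove the subdivision is present in an almost-biregular host, tracking $d_M$ and $d_N$ separately throughout. You also correctly work out that the average number of $2k$-walks per pair of $M'$-vertices scales as $(m')^{2k\alpha-k-1}$, giving a constant in the multi-subdivision case and $(m')^{-1/s}$ in the single-subdivision case, and you correctly flag that when $m'>n'$ one runs the argument with roles swapped (this does work because $\alpha > \beta = 1/2$).

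However, there is a genuine gap in the core step. You write that ``a convexity/supersaturation argument produces many pairs in $M'$ with many internally-disjoint paths of length $2k$,'' but this does not follow from supersaturation alone: having many $2k$-paths between a pair $(x,y)$ in no way guarantees that many of them are \emph{internally disjoint}. All the paths could be routed through a handful of common interior vertices. Converting ``many paths sharing a leaf vector'' into ``many internally-disjoint paths'' is precisely what the paper's heavy/light/admissible machinery (\Cref{lem:admissible-paths}, \Cref{lem:Th-linked}, \Cref{lem:robust-subfamily}, \Cref{lem:robust-structure}, \Cref{lem:path-linkage}) is for, and it is not a routine observation. The actual structure of the proof of \Cref{thm:upper-bound} is a contradiction argument organized around this dichotomy: assuming $G'$ is $K_{s,t}^{(2k)}(r)$-free, one shows that $\eta$-heavy $\eta$-admissible $j$-subpaths are rare (\Cref{lem:bounds-on-paths}, whose proof passes through robust subfamilies, then $(2k,\eta)$-linked pairs, then a K\H{o}v\'ari--S\'os--Tur\'an step on an auxiliary bipartite graph of linked pairs); meanwhile the total count of $2k$-paths with endpoints in $M'$ is $\Omega(|M'|\,d_M^k d_N^k)$; so almost all of them are $\eta$-light; but each leaf vector can host at most $\eta^{(2k)^2}$ light paths, giving the cap $|M'|^2\eta^{(2k)^2}$ on light paths and hence the bound on $e(G')$. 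Without the admissibility decomposition, the chain from ``supersaturation'' to ``internally-disjoint paths'' does not close. The single-subdivision case has the analogous issue for spiders (\Cref{lem:admissible-spiders}, \Cref{lem:spider-linkage}); the refined exponent savings comes from spider-admissibility, not merely from ``counting $s$-stars of paths.''

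A secondary omission is that $(2k,\eta)$-linkage requires the subpath length and the target length to have the same parity; \Cref{lem:path-linkage} handles this by sometimes passing to a subpath of length $j-1$, and \Cref{lem:bounds-on-paths} carves out the special case $j=2$, $k$ odd. This parity bookkeeping is invisible in your sketch but is load-bearing when threading the two degrees $d_M,d_N$ through the exponents.
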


Furthermore, in Theorem \ref{thm:theta-lower-even}, by establishing a lower bound for so-called theta graphs (i.e. the union of $r$ internally disjoint paths of length $2k$ sharing the same endpoints), we will show that the first upper bound is asymptotically tight for infinitely many pairs when $r$ is sufficiently large. 
In Theorem \ref{thm:kst-lower-even}, we will show that the second upper bound is asymptotically tight for infinitely many pairs $m,n$ when $t$ is sufficiently large. The situation with odd subdivisions of $K_{s,t}$ is more difficult. We will raise the natural question in the concluding remarks.

\begin{theorem} \label{thm:main-kp}
    Let $k,p,r$ be positive integers. Then for all positive integers $m\leq n$,
    \[\ex(m,n, K_{p}^{(2k)}(r))\leq O(m^{\frac{1}{2}+\frac{1}{2k}} n^{\frac{1}{2}}+n\log m).\]
\end{theorem}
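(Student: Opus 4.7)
The plan is to reduce to an almost-biregular bipartite subgraph using Theorem~\ref{thm:biregular}, and then to run the supersaturation and path-counting machinery of Janzer \cite{Janzer-longer} for $\ex(n,K_p^{(2k)}(r))$, adapted to the asymmetric bipartite setting. This mirrors the strategy for Theorem~\ref{thm:main-kst}.

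\textbf{Setup and biregularization.} Let $G$ be a bipartite graph with parts $M,N$ of sizes $m\le n$, with $e(G)\ge C\bigl(m^{\frac{1}{2}+\frac{1}{2k}}n^{\frac{1}{2}}+n\log m\bigr)$ for a large constant $C=C(k,p,r)$. The second term gives $d(G)\gtrsim C\log m\ge 8$, so Theorem~\ref{thm:biregular} applies with $\alpha=\tfrac12+\tfrac{1}{2k}$ and $\beta=\tfrac12$ (and indeed $\alpha+\beta>1$). We obtain a $16$-almost-biregular subgraph $G'$ on parts $M'\subseteq M$, $N'\subseteq N$ of sizes $m',n'$, with $e(G')\ge \lambda C (m')^{\frac12+\frac{1}{2k}}(n')^{\frac12}$ and $d(G')\gtrsim d(G)/\log m\gtrsim C$. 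Writing $d_M\asymp e(G')/m'$ and $d_N\asymp e(G')/n'$ for the typical degrees, one computes
\[d_M\, d_N \;=\;\frac{e(G')^2}{m'n'}\;\gtrsim\;\lambda^2 C^2\,(m')^{1/k}.\]
This matches the threshold in terms of $m'$ at which Janzer's argument in the balanced setting forces $K_p^{(2k)}(r)$.

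\textbf{Walk counting in $G'$.} We aim to place the $p$ roots of $K_p^{(2k)}(r)$ on $M'$. A walk of length $2k$ from $M'$ to $M'$ alternates sides, contributing a factor $d_M$ at each $M\to N$ step and $d_N$ at each $N\to M$ step, giving $\asymp m'(d_Md_N)^k$ such walks in total. By a Jensen/Cauchy--Schwarz argument applied to the row sums of the $M'\times M'$ walk-counting matrix, a positive fraction of ordered pairs $(u,v)\in M'\times M'$ are joined by at least $\tfrac12(d_Md_N)^k/m'$ walks of length $2k$, and by the product bound above this is at least a large constant depending on $C$. Invoking the disjointification step of Janzer \cite{Janzer-longer}, which bounds the fraction of $2k$-walks revisiting some vertex by a sum of negative powers of $d_M$ and $d_N$, one shows that for $C$ large enough a constant fraction of ordered pairs $(u,v)\in M'\times M'$ are in fact joined by at least $r\binom{p}{2}$ pairwise internally disjoint $2k$-paths in $G'$.

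\textbf{Extracting $K_p^{(2k)}(r)$.} Let $H$ be the auxiliary graph on $M'$ with $uv\in E(H)$ whenever $u,v$ admit at least $r\binom{p}{2}$ internally disjoint $2k$-paths in $G'$. Then $e(H)\gtrsim (m')^2$, so $H$ has positive edge density and by Tur\'an's theorem contains a copy of $K_p$. For each of its $\binom{p}{2}$ edges, greedily pick $r$ of the available paths so that the collection of $r\binom{p}{2}$ chosen paths is pairwise internally disjoint: since each previously committed path contributes only $2k-1$ internal vertices and biregularity limits the number of candidate paths through any fixed vertex, the $\binom{p}{2}$-factor of slack in the auxiliary graph definition absorbs all clashes. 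The resulting configuration is a copy of $K_p^{(2k)}(r)$ in $G'\subseteq G$, a contradiction.

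\textbf{Main obstacle.} The crux is the walk-to-path (disjointification) argument in the asymmetric regime. Janzer's original proof in \cite{Janzer-longer} controls degenerate walks using a single degree parameter $d$; here one must separately estimate contributions from walks revisiting an $M'$-vertex (inversely controlled by a power of $d_M$) versus those revisiting an $N'$-vertex (inversely controlled by a power of $d_N$), and verify that the sum of these error terms is small compared to the main walk count $m'(d_Md_N)^k$. It is precisely the product lower bound $d_Md_N\gtrsim (m')^{1/k}$, which in turn relies on the $\alpha+\beta>1$ hypothesis of Theorem~\ref{thm:biregular}, that makes the accounting close.
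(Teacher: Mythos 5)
Your biregularization step matches the paper: applying Theorem~\ref{thm:biregular} (or Corollary~\ref{cor:biregular}) with $\alpha=\frac12+\frac1{2k}$, $\beta=\frac12$ is exactly how the paper passes to an almost-biregular subgraph $G'$, and the arithmetic giving $d_M d_N \gtrsim C^2 (m')^{1/k}$ is correct. After that point, however, the proposal departs from what the paper proves and contains two genuine gaps.

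The step asserting that a positive fraction of ordered pairs $(u,v)\in M'\times M'$ are joined by at least $\tfrac12(d_Md_N)^k/m'$ walks is not implied by Jensen or Cauchy--Schwarz. Those inequalities control second moments (which is how one shows the \emph{total} walk count is large), but they say nothing about how the walks distribute over pairs: all of the $2k$-walks could concentrate on a vanishing fraction of pairs. To promote "large average" to "a constant fraction of pairs are above average" you would need an upper bound on the maximum number of walks through a single pair, and that is precisely what fails in the presence of heavy pairs. In other words, your claim holds only in the situation where there are no heavy pairs, which begs the question the heavy/light dichotomy is designed to resolve.

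Even setting that aside, the inference from "$(u,v)$ admit many $2k$-paths" to "$(u,v)$ admit at least $r\binom p2$ pairwise internally disjoint $2k$-paths" is the crux, and "invoking the disjointification step of Janzer" does not supply it. All those paths could funnel through a bottleneck vertex; having many paths is exactly the \emph{heavy} case, not the easy one. Converting heaviness into genuine linkage is what the admissibility/robustness machinery (Lemmas~\ref{lem:admissible-paths}--\ref{lem:robust-subfamily}, \ref{lem:linkingpathskp}) does, and it requires careful bookkeeping at the level of subpaths, not endpoints. The paper's proof of this theorem does not argue about $2k$-walks between $M'$-vertices at all. It counts $k$-paths with a fixed endpoint, builds an edge-coloured auxiliary graph $F_v$ on them, shows $F_v$ has bounded clique number via Lemma~\ref{lem:linkingpathskp} and Ramsey's theorem, then uses Tur\'an's theorem to produce many light $2k$-walks, and derives the contradiction from the \emph{upper} bound on the number of light walks.

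Finally, the proposal does not touch what the paper flags as the main obstacle in the unbalanced setting: one cannot show, uniformly, that heavy $2$-paths are rare. Heavy $(N,N)$-type and $(M,M)$-type $2$-paths require different treatments (Lemma~\ref{lem:2pathsodd} bounds the former; Lemma~\ref{goodlightpaths} produces many light ones of the latter type), and which one is relevant depends on the parity of $k$. A symmetric walk count between $M'$-vertices tacitly treats all $2$-paths alike, which is exactly the feature of the balanced setting that is lost here and which the paper's argument is built around.
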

As commented above, the  upper bound is asymptotically tight for infinitely many pairs when $r$ is sufficiently large. Let us remark that while our general strategy for both theorems follows that used by Janzer in \cite{Janzer-bipartite} and \cite{Janzer-longer}, one main obstacle to proving Theorem \ref{thm:main-kp} lies in the very delicate role paths of length two play due to the unbalanced part sizes of the host graph. The main novelty of the proof of Theorem \ref{thm:main-kp} lies in our handling of the paths of length two and the way we assemble different ingredients together subject to that.

We organize the rest of the paper as follows. In Section \ref{sec:regularity}, we prove our regularization theorems: Theorem \ref{thm:enhanced-regularization} and Theorem \ref{thm:biregular}. In Section \ref{sec:kst-regular}, we prove Theorem \ref{thm:main-kst}.
In Section \ref{sec:kp-regular}, we prove Theorem \ref{thm:main-kp}. In Section \ref{sec:lower}, we establish corresponding lower bounds.
In Section \ref{sec:conclusion}, we give some concluding remarks.


\section{Proof of Theorem \ref{thm:enhanced-regularization} and Proof of Theorem \ref{thm:biregular}}
\label{sec:regularity}
To prove Theorem \ref{thm:enhanced-regularization} and Theorem \ref{thm:biregular}, we build on Pyber's method from \cite{Pyber} on regular subgraphs as well as a variant of it used by Pyber, R\"odl, and Szemer\'edi \cite{PRS}.
   
\begin{definition}
    {\rm Given a positive integer $d$, a bipartite graph $G$ with a bipartition $(A,B)$ is called $d$-{\it half-regular} at $A$ if all vertices $A$ have degree $d$.}
\end{definition}
The following lemma is a key ingredient in Pyber's approach.

\begin{lemma} [\cite{Pyber}] \label{lem:pyber}
Let $d$ be a positive integer. Let $G$ be a bipartite graph with a bipartition $(A,B)$ such that $|A|\geq|B|$ and $G$ is $d$-half-regular at $A$. Then $G$ contains a matching $M_1$ such that 
$G[V(M_1)]$ is $d$-half-regular at $V(M_1)\cap A$.
\end{lemma}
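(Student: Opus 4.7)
My plan is to deduce the lemma from König's theorem via a Dulmage--Mendelsohn style reachability argument. The key reformulation is that a matching $M_1$ has the required property iff the pair $(A_1, B_1) := (V(M_1)\cap A,\, V(M_1)\cap B)$ satisfies $N(A_1)\subseteq B_1$ and $M_1$ is a perfect matching between $A_1$ and $B_1$; so it suffices to exhibit such a pair.

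The main step is as follows. Let $M$ be a maximum matching of $G$, let $U := A\setminus V(M)$ be the set of $A$-vertices left unsaturated by $M$, and let $Z$ be the set of vertices reachable from $U$ along $M$-alternating paths. The standard König argument shows that $(A\setminus Z)\cup (B\cap Z)$ is a vertex cover of $G$, which immediately yields $N(A\cap Z)\subseteq B\cap Z$. By maximality of $M$, every vertex of $B\cap Z$ is $M$-saturated (otherwise the alternating path from $U$ would be an augmenting path), and its $M$-partner necessarily lies in $A\cap Z$; consequently $M|_Z$ is a perfect matching between $(A\cap Z)\setminus U$ and $B\cap Z$.

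I would then set $M_1 := M|_Z$, $A_1 := (A\cap Z)\setminus U$, and $B_1 := B\cap Z$. Since $N(A_1)\subseteq N(A\cap Z)\subseteq B_1 = V(M_1)\cap B$, each $a\in A_1$ keeps all $d$ of its $G$-neighbors inside $V(M_1)$, so $G[V(M_1)]$ is $d$-half-regular at $A_1 = V(M_1)\cap A$. Provided $U\neq \emptyset$ and $d\geq 1$, the $d$ neighbors of any $u\in U$ automatically lie in $B\cap Z$, which ensures $B_1$ (and hence $M_1$) is nonempty. In the remaining case $U=\emptyset$, the matching $M$ already saturates $A$, and combining this with $|A|\geq |B|\geq |V(M)\cap B|=|A|$ forces $|A|=|B|$ and $N(A)=B$, so $M$ itself is the desired matching.

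I do not foresee a serious obstacle; the only care needed is in handling the case $U=\emptyset$ separately and in verifying that the alternating-path characterization of $Z$ really does force the containment $N(A\cap Z)\subseteq B\cap Z$ (which is exactly the content of König's vertex cover construction).
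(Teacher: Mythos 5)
The paper does not actually prove this lemma; it cites Pyber \cite{Pyber}, so there is no in-text argument to compare with. Your proof is correct: the containment $N(A\cap Z)\subseteq B\cap Z$ holds because any neighbor of a vertex in $A\cap Z$ extends (or already lies on) the alternating path from $U$; the vertices of $B\cap Z$ are all $M$-saturated by maximality of $M$, and their $M$-partners land in $(A\cap Z)\setminus U$, so $M|_Z$ is indeed a perfect matching between $(A\cap Z)\setminus U$ and $B\cap Z$ with $N\bigl((A\cap Z)\setminus U\bigr)\subseteq B\cap Z$; and your treatment of nonemptiness (including the $U=\emptyset$ case, where $M$ is perfect and one takes $M_1=M$) is sound.

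That said, the classical (and, as far as I can tell, Pyber's original) route is lighter. Take $A_1\subseteq A$ to be a minimal nonempty set with $|N(A_1)|\leq |A_1|$; such a set exists since $A$ itself qualifies ($|N(A)|\leq |B|\leq |A|$). Minimality forces $|N(A_1)|=|A_1|$ (otherwise delete one vertex of $A_1$), and it also forces Hall's condition for matching $B_1:=N(A_1)$ into $A_1$: if some nonempty $T\subseteq B_1$ had $|N(T)\cap A_1|<|T|$, then $A_1':=A_1\setminus N(T)$ would be a nonempty proper subset of $A_1$ with $N(A_1')\subseteq B_1\setminus T$ and hence $|N(A_1')|\leq |A_1|-|T|<|A_1'|$, contradicting minimality. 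The resulting perfect matching between $A_1$ and $B_1$ is the desired $M_1$, and the property $N(A_1)\subseteq B_1$ gives the half-regularity directly. Your Dulmage--Mendelsohn/K\"onig decomposition arrives at structurally the same object (your $(A\cap Z)\setminus U$ and $B\cap Z$ play the roles of $A_1$ and $B_1$), but needs the heavier apparatus of a maximum matching, the vertex-cover structure, and a separate case split on whether $U=\emptyset$. Both are correct; the minimal-set argument uses only Hall's theorem and avoids the case analysis.
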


We are now ready to prove Theorem \ref{thm:enhanced-regularization}.

   \begin{proof}[Proof of Theorem \ref{thm:enhanced-regularization}]
   By a well-known fact, $G$ contains a subgraph $G'$ with minimum degree at least 
   $d:=\lceil cn^{\ve}\rceil$. Let $(A,B)$ be a bipartition of $G'$ with $|A|\geq |B|$.
   Let $G_0$ be obtained from $G'$ by keeping exactly $d$ edges incident to each vertex in $A$.
   Then $G_0$ is $d$-half-regular at $A$. By Lemma~\ref{lem:pyber}, $G_0$ contains a matching $M_1$
   such that $G_0[V(M_1)]$ is $d$-half-regular at $A_1:=V(M_1)\cap A$. Let $B_1=V(M_1)\cap B$.
   Then $|A_1|=|B_1|$.   Let $G_1=G_0\setminus M_1$. Then $(A_1, B_1)$ is a bipartition of $G_1$ and $G_1$ is  $(d-1)$-half-regular at $A_1$ and $|A_1|=|B_1|$. Hence $G_1$ satisfies the hypotheses of Lemma \ref{lem:pyber} with $d$ replaced with $d-1$. We can thus repeat the arguments to find $\ceil{d/2}$ edge-disjoint matchings $M_1,M_2,\dots, M_{\ceil{\frac{d}{2}}}$ in $G$ such that $V(M_1)\supseteq V(M_2)\cdots\supseteq V(M_{\ceil{\frac{d}{2}}})$. Let $\I=\{1,\dots, \ceil{\frac{d}{2}}\}$.  By definition,  for each $i\in \I$, $G_{i-1}[V(M_i)]$ is $(d-i+1)$-half-regular at $A_i$. Hence, $|M_i|\geq d-i+1$.
   In particular, $|M_{\ceil{\frac{d}{2}}}|\geq \frac{d}{2}$. Also, trivially $|M_1|\leq \frac{n}{2}$.
   For every $1\leq j\leq \floor{\log_2 (\frac{2n}{d})}$, let 
   \[\I_j=\{i\in \I:\frac{n}{2^{j+1}} < |M_i| \leq \frac{n}{2^j}.\}\]
   Note that the $\I_j$'s partition $\I$. For each $\I_j$, we call it
   {\it thick} 
    if $|\I_j| \geq (\frac{2^\ve-1}{2})c \left(\frac{n}{2^j}\right)^{\ve}$; otherwise we call it {\it thin}.
    Note that 
    \[\left|\bigcup \{\I_j: \I_j \mbox{ is thin} \}\right|\leq \sum_{j=1}^\infty \left(\frac{2^\ve-1}{2}\right)c \left(\frac{n}{2^j}\right)^\ve    \leq \frac{c}{4}n^\ve\leq d/4.\]
    Hence, 
    \[\left|\bigcup \{\I_j: \I_j \mbox { is thick } \}\right|> d/4.\]

By the pigeonhole principle, there exists some $1\leq j\leq \floor{\log_2 (\frac{2n}{d})}$ such that
$\I_j$ is thick and $|\I_j|\geq d/[4\log_2(\frac{2n}{d})]$. Fix such a $j$. Let $p=|\I_j|$. 
Since $V(M_1)\supseteq V(M_2)\supseteq \dots \supseteq V(M_{\ceil{\frac{d}{2}}})$, 
$\I_j=\{\ell,\ell+1,\dots, \ell+p-1\}$ for some $\ell$. Also, by the definition of $\I_j$,
$|M_i|\geq |M_\ell|/2$ for each $i\in \I_j$.

Let $F$ be the subgraph of $G$ formed by taking
the union of the $M_i$'s that lie in $\I_j$. Then 
$v(F)=v(M_\ell)=2|M_\ell|$ and $e(F)\geq p(|M_\ell|/2)=pv(F)/4$. 
Hence, $F$ has average degree $d(H)\geq p/2$. Also, since $F$
is the union of $p$ matchings, $\Delta(F)\leq p$. By iteratively deleting a vertex whose degree drops below $p/6$, we can find a subgraph $H$ of $F$ with $e(H)\geq pv(F)/12=p|M_\ell|/6$ and $\delta(H)\geq p/6$. Let $m=v(H)$.
Now, $H$ is an $m$-vertex $6$-almost-regular graph. Also, $m=v(H)\leq 2|M_\ell|$.
Since $\I_j$ is thick, $p=|\I_j|\geq  (\frac{2^\ve-1}{2})c \left(\frac{n}{2^j}\right)^{\ve}
\geq \frac{2^\ve-1}{2}c|M_\ell|^\ve$. So,
\[e(H)\geq \frac{p}{6}|M_\ell|\geq \left(\frac{2^\ve-1}{12}\right)c |M_\ell|^{1+\ve}
\geq \frac{(2^\ve-1)c}{48}  m^{1+\ve}.\]

Also, by our choice of $\I_j$, we have $p=|\I_j|\geq d/[4\log_2(\frac{2n}{d})]$.
So $d(H)\geq p/3\geq d/(12 \log_2(\frac{2n}{d})]$.
\end{proof}

Next, we prove Theorem \ref{thm:biregular}. As in the proof of Theorem \ref{thm:enhanced-regularization}, our first step is to reduce the host graph to a half-regular graph.
This is accomplished by the following simple lemma, which might be of independent interest.

\begin{lemma}\label{lem:onesideregular}
Let $c, \alpha,  \beta>0$ be reals, where $\alpha, \beta \leq 1$ and $\alpha + \beta \geq 1$.
    Let $G$ be a bipartite graph with parts $M,N$, where $|M|\leq |N|$.
    Suppose $e(G) \geq c|M|^\alpha|N|^{\beta}$ and $d(G)\geq 8$. Then there exists $G' \subseteq G$ with parts $M' \subseteq M$, $N' \subseteq N$ such that  
    $e(G')\geq \frac{1}{2^{2+\frac{1}{\alpha}+\frac{1}{\beta}}} c|M'|^\alpha|N'|^{\beta}$, $G'$ is  half regular at the larger side of $(M',N')$, and $d(G')\geq \frac{d(G)}{8}$.
\end{lemma}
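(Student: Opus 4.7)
My plan is to combine a cleaning step (for the minimum-degree bound) with two rounds of dyadic degree bucketing (one per side), where the two rounds together yield the factor $2^{2+1/\alpha+1/\beta}$. First, I would preprocess $G$ by iteratively deleting any vertex of degree less than $d(G)/8$; since each such removal destroys fewer than $d(G)/8$ edges and there are at most $|M|+|N|$ vertices, the total loss is at most $(|M|+|N|)\,d(G)/8 = e(G)/4$. The resulting subgraph $G_0$ then satisfies $\delta(G_0)\ge d(G)/8$, which automatically gives any further subgraph $G'\subseteq G_0$ the property $d(G')\ge d(G)/8$, and $e(G_0)\ge 3e(G)/4 \ge \tfrac{3}{4}c|M|^\alpha|N|^\beta$.

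Next, I would dyadic-bucket the $N$-side of $G_0$ by degree: set $N_i=\{v\in N(G_0): d_{G_0}(v)\in[2^i,2^{i+1})\}$. The goal is to find $i^*$ such that keeping only $2^{i^*}$ edges at each vertex of $N_{i^*}$ produces a subgraph that is $2^{i^*}$-half-regular at $N_{i^*}$ and contains at least $\tfrac{c}{C_\beta}|M|^\alpha |N_{i^*}|^\beta$ edges, for some $C_\beta=2^{O(1/\beta)}$. To locate such an $i^*$ with no logarithmic loss, I would imitate the thick/thin dichotomy from the proof of Theorem \ref{thm:enhanced-regularization}: call $i$ thin if $2^i|N_i|^{1-\beta}<c|M|^\alpha/C_\beta$, so that $|N_i|<(c|M|^\alpha/(C_\beta 2^i))^{1/(1-\beta)}$, and bound the total edge contribution of thin buckets by a geometric series in $2^{-i\beta/(1-\beta)}$. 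The key estimate here uses $d(G)\ge c|M|^\alpha|N|^{\beta-1}$ (deduced from the edge lower bound together with $|M|\le|N|$) to absorb the thin contribution into a constant fraction of $e(G_0)$. (The case $\beta=1$ is handled directly since thin buckets simply have $2^i$ uniformly small.)

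Finally, I would split on $|N_{i^*}|$ versus $|M|$. If $|N_{i^*}|\ge|M|$, then $N_{i^*}$ is the larger side of $(M,N_{i^*})$, and retaining $2^{i^*}$ edges at each vertex of $N_{i^*}$ yields the desired $G'$. If $|N_{i^*}|<|M|$, the larger side of the relevant subgraph is $M$, so I must exhibit half-regularity on $M$ instead. For this I would apply a second dyadic bucketing on the $M$-side within $G_0[M,N_{i^*}]$ and use an analogous thick/thin argument, losing a further factor $C_\alpha=2^{O(1/\alpha)}$, to find a bucket $M_{k^*}$ satisfying simultaneously $|M_{k^*}|\ge|N_{i^*}|$ and $2^{k^*}|M_{k^*}|^{1-\alpha}\ge c|N_{i^*}|^\beta/C_\alpha$. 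Then $G'=G_0[M_{k^*},N_{i^*}]$, restricted to $2^{k^*}$ edges per vertex of $M_{k^*}$, is $2^{k^*}$-half-regular at $M_{k^*}$, which is now the larger side, with the desired edge count $\ge \tfrac{c}{C_\alpha C_\beta}|M_{k^*}|^\alpha|N_{i^*}|^\beta$.

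The main obstacle is the second bucketing in the unbalanced case: I need not merely an edge-count threshold but also the size condition $|M_{k^*}|\ge|N_{i^*}|$ so that the half-regular side is the larger one. To handle this one must absorb both the ``small buckets'' ($|M_k|<|N_{i^*}|$) and the ``edge-thin buckets'' into the same geometric sum. This is where the hypothesis $\alpha+\beta\ge 1$ is crucial: it makes the two exponents $\tfrac{\beta}{1-\beta}$ and $\tfrac{\alpha}{1-\alpha}$ in the two geometric series compatible with the overall edge lower bound $c|M|^\alpha|N|^\beta$, so that the product $C_\alpha\cdot C_\beta$ of losses stays a pure constant in $\alpha,\beta$, ultimately of the form $2^{2+1/\alpha+1/\beta}$.
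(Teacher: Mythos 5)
Your proposal takes a genuinely different route from the paper, and there is a real gap at the step you flag as the ``main obstacle.''

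The paper's proof does not use any dyadic bucketing. It takes a \emph{minimal} subgraph $G_0$ satisfying both $e(G_0)\ge c|M_0|^\alpha|N_0|^\beta$ and $d(G_0)\ge d(G)$, prunes to $G_1$ with $\delta(G_1)\ge\frac14 d(G_0)$ (retaining half the edges), and then uses minimality of $G_0$ in the cheapest possible way: since $G_1\subsetneq G_0$, $G_1$ must \emph{fail} one of the two defining inequalities. Whichever inequality fails gives a lower bound on either $|M_1|^\alpha|N_1|^\beta$ or $|M_1|+|N_1|$ in terms of the same quantity for $G_0$, which in turn shows $\max\{|M_1|,|N_1|\}\gtrsim|M_0|+|N_0|$. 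Keeping exactly $\frac14 d(G_0)$ edges per vertex on the larger side then wins directly. This handles both the balanced and unbalanced orientations in one stroke, with no geometric series and no case split on which part is larger.

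Your first bucketing on the $N$-side is workable in spirit: ``thin'' buckets with $2^i|N_i|^{1-\beta}<c|M|^\alpha/C_\beta$ have $|N_i|$ bounded, and the resulting geometric sum in $2^{-i\beta/(1-\beta)}$ can be made a constant fraction of $e(G_0)$ using the lower bound $d(G)\ge c|M|^\alpha|N|^{\beta-1}$ to control the starting index (though the constant $C_\beta$ comes out messier than $2^{O(1/\beta)}$ as $\beta\to 0$ unless you are careful).

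The gap is in your Step 4 (the case $|N_{i^*}|<|M|$). You now need a bucket $M_{k^*}$ that is simultaneously \emph{large} ($|M_{k^*}|\ge|N_{i^*}|$) and \emph{edge-thick} ($2^{k^*}|M_{k^*}|^{1-\alpha}\ge c|N_{i^*}|^\beta/C_\alpha$), and you propose to discard all ``small'' and all ``edge-thin'' buckets by a single geometric sum. But these two failure modes are not comparable: a bucket can be small without being edge-thin. If $|M_k|<|N_{i^*}|$ you only get $2^k|M_k|^{1-\alpha}<2^k|N_{i^*}|^{1-\alpha}\le|N_{i^*}|^{2-\alpha}$ (using $2^k\le|N_{i^*}|$), and $|N_{i^*}|^{2-\alpha}\ge c|N_{i^*}|^\beta/C_\alpha$ for all but trivially small $|N_{i^*}|$ whenever $\alpha+\beta\le 2$, so smallness alone gives no geometric decay of the edge contribution. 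Moreover, the edges incident to a small bucket are only bounded by $2^{k+1}|M_k|<2^{k+1}|N_{i^*}|$, and summing that over $k$ up to $\log|N_{i^*}|$ can account for nearly all of $e(G_1)$. Note also that your earlier min-degree cleaning guarantees $\delta(G_0)\ge d(G)/8$ in $G_0$, not in $G_0[M,N_{i^*}]$, so you cannot use it to truncate the small-$k$ tail either. You acknowledge this obstacle, but the proposed absorption is a hope, not a proof; as written, Step 4 does not go through.
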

\begin{proof} Let $d=d(G)$.
    Let $G_0$ be a minimal subgraph of $G$ satisfying  $e(G_0) \geq c |V(G_0) \cap M|^\alpha |V(G_0)\cap N|^\beta$ and $d(G_0)\geq d$. 
    Since $G$ satisfies the condition, $G_0$ exists. Let $M_0=V(G_0)\cap M, N_0=V(G_0)\cap N$.
    If $\delta(G_0)\geq \frac{1}{4}d(G_0)$, then we let $G'$ be obtained from $G_0$ by keeping exactly $\ceil{\frac{1}{4}d(G_0)}$ edges at each vertex in the larger side of $(M_0,N_0)$.
    Then $e(G')\geq \frac{1}{4}d(G_0)\cdot \frac{1}{2}(|M_0|+|N_0|)\geq \frac{1}{4}e(G_0)
    \geq \frac{c}{4}|M_0|^\alpha |N_0|^\beta$ and $d(G_1)\geq \frac{1}{4} d(G)$. The claim clearly holds. So, we may assume $\delta(G_0)<\frac{1}{4} d(G_0)$.
    
    Let $G_1$ be obtained from $G_0$ by iteratively removing vertices whose degrees become less than $\frac{1}{4}d(G_0)$. Then $e(G_1)\geq \frac{1}{2}e(G_0)$ and $\delta(G_1)\geq \frac{1}{4}d(G_0)$. Let $M_1=V(G_1)\cap M, N_1=V(G_1)\cap N$. Then $e(G_1)\geq \max\{\frac{c}{2}
    |M_0|^\alpha|N_0|^\beta$, $\frac{1}{4}d(G_0)(|M_0|+|N_0|)\}$. On the other hand, by the minimality of $G_0$, $e(G_1)<\max\{ c|M_1|^\alpha |N_1|^\beta, \frac{d}{2}(|M_1|+|N_1|)\}$.
    This implies either $c|M_1|^\alpha|N_1|^\beta> \frac{c}{2} |M_0|^\alpha|N_0|^\beta$ or
    $\frac{d}{2}(|M_1|+|N_1|)\geq \frac{d(G_0)}{4}(|M_0|+|N_0|)$. 
    Let $G'$ be obtained from $G_1$ by keeping exactly $\frac{1}{4}d(G_0)$ edges at each vertex in the larger side of $(M_1,N_1)$. Let $M'=M_1, N'=N_1$.
    In the former case, we have
    $|M_1|\geq \frac{1}{2^{\frac{1}{\alpha}}}|M_0|$ and  $|N_1|\geq \frac{1}{2^{\frac{1}{\beta}}}|N_0|$. So, $\max\{|M_1|,|N_1|\}\geq \frac{1}{2}\frac{1}{2^{\frac{1}{\alpha}+\frac{1}{\beta}}}(|M_0|+|N_0|)$.  In the latter case, we have $\max\{M_1,N_1\}\geq \frac{1}{4}(|M_0|+|N_0|)$.
    Hence, $e(G')\geq \frac{1}{4}d(G_0)\cdot \max\{|M_1|,|N_1|\}\geq\frac{1}{8}d(G_0)\frac{1}{2^{\frac{1}{\alpha}}+2^{\frac{1}{\beta}}} (M_0|+|N_0|)$ $\geq \frac{1}{2^2+2^\frac{1}{\alpha}+2^{\frac{1}{\beta}}} e(G_0)\geq \frac{1}{2^2+2^\frac{1}{\alpha}+2^{\frac{1}{\beta}}} c|M'|^\alpha |N'|^\beta$. Also, $d(G')\geq \frac{1}{8}d(G_0)\geq \frac{1}{8}d(G)$.

\end{proof}

As defined in \cite{PRS}, given a bipartite graph $G$ with an ordered bipartition $(M,N)$, an {\it $N$-roof}  $R$ is a subgraph $R \subseteq G$ such that every vertex $v \in N$ satisfies that $d_R(v) = 1$. 
We need the following lemma from \cite{PRS}.

\begin{lemma} [\cite{PRS}] \label{lem:roof}
    Let $G$ be a bipartite graph with an ordered bipartition $(M, N)$. 
    Then \[\min \{\Delta(R): R \mbox{ is an $N$-roof of $G$}\} = 
    \max_{X\subseteq N} \left\lceil\frac{|X|}{|N_G(X)|}\right \rceil.\]
\end{lemma}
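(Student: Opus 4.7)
The plan is to prove the min–max equality by proving the two inequalities separately, with the easier direction coming from a counting argument and the harder direction coming from an application of Hall's marriage theorem to a blown-up bipartite graph.

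For the direction $\min_R \Delta(R) \geq \max_X \lceil |X|/|N_G(X)|\rceil$, I would fix an arbitrary $N$-roof $R$ and an arbitrary $X \subseteq N$. Since every vertex $v \in X$ has $d_R(v) = 1$ and its unique $R$-neighbor lies in $N_G(v) \subseteq N_G(X)$, the total number of edges of $R$ with one endpoint in $X$ equals exactly $|X|$, and every such edge has its other endpoint in $N_G(X)$. The pigeonhole principle then yields a vertex $u \in N_G(X)$ with $d_R(u) \geq \lceil |X|/|N_G(X)|\rceil$, so $\Delta(R) \geq \lceil |X|/|N_G(X)|\rceil$. Taking the minimum over $R$ and the maximum over $X$ gives the desired inequality.

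For the direction $\min_R \Delta(R) \leq \max_X \lceil |X|/|N_G(X)|\rceil$, let $\rho^\ast := \max_{X \subseteq N} \lceil |X|/|N_G(X)|\rceil$. I would construct an auxiliary bipartite graph $G^\ast$ on parts $N$ and $M \times [\rho^\ast]$ by replacing each $u \in M$ with $\rho^\ast$ clones $(u,1), \dots, (u,\rho^\ast)$ and joining each $v \in N$ to every clone of every $G$-neighbor of $v$. The goal is a matching of $G^\ast$ that saturates $N$: projecting such a matching back to $G$ (i.e.\ replacing each chosen edge $v(u,i)$ by $vu$) produces an $N$-roof $R$ in which each $u \in M$ receives at most $\rho^\ast$ edges, so $\Delta(R) \leq \rho^\ast$.

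To produce the saturating matching I would verify Hall's condition on the $N$-side of $G^\ast$: for every $X \subseteq N$, the neighborhood $N_{G^\ast}(X)$ is exactly the $\rho^\ast$ clones of each vertex of $N_G(X)$, so $|N_{G^\ast}(X)| = \rho^\ast |N_G(X)| \geq \lceil |X|/|N_G(X)|\rceil \cdot |N_G(X)| \geq |X|$, where the first inequality uses the definition of $\rho^\ast$. Hall's theorem then supplies the matching and completes the proof. I do not anticipate a serious obstacle here; the only subtlety is checking that the ceiling in the definition of $\rho^\ast$ is absorbed correctly by Hall's condition, which it is because $\rho^\ast |N_G(X)| \geq |X|$ is equivalent to $\rho^\ast \geq \lceil |X|/|N_G(X)|\rceil$ (as both sides are integers).
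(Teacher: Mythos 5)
The paper cites this lemma from Pyber, R\"odl, and Szemer\'edi without reproducing its proof, so there is no internal argument to compare your write-up against. Your proof is correct and uses the standard route: the lower bound $\min_R \Delta(R)\ge \max_X\lceil |X|/|N_G(X)|\rceil$ is the straightforward pigeonhole count (all $|X|$ roof-edges incident to $X$ land in $N_G(X)$), and the upper bound is obtained by the classical ``clone each $M$-vertex $\rho^*$ times and apply Hall'' device, which is exactly how one turns a ratio condition into a bounded-degree assignment. Your verification of Hall's condition, $|N_{G^*}(X)| = \rho^*|N_G(X)| \ge \lceil |X|/|N_G(X)|\rceil\cdot |N_G(X)| \ge |X|$, is correct, and your closing remark that the ceiling is absorbed because $\rho^*$ and $|X|$ are integers is the right observation. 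The only implicit hypothesis worth flagging (which the lemma as stated also leaves implicit) is that every vertex of $N$ has at least one neighbor in $M$; otherwise $N_G(X)$ can be empty and both sides of the identity degenerate, and no $N$-roof exists.
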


\begin{lemma}\label{lem:halfreg-almreg}
Let $0<\alpha, \beta \leq 1$ be reals such that $\alpha+\beta >  1$. Let $c > 0$. 
    Let $G$ be a bipartite graph with an ordered bipartition $(M, N)$ such that $|M|\leq |N|$,
    $G$ is $d$-half-regular at $N$, where $d\geq 1$, and
    $e(G)\geq c|M|^\alpha |N|^\beta$. Then $G$ contains a $16$-almost-biregular subgraph $G'$ with parts $M'\subseteq M, N'\subseteq N$ such that 
    \[e(G')\geq \frac{(2^{\alpha+\beta-1}  -1 )c}{16} |M'|^\alpha |N'|^\beta \mbox{ and } d(G')\geq \frac{1}{8\log |M|} d(G).\]
\end{lemma}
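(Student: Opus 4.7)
The plan is to adapt the strategy from the proof of Theorem \ref{thm:enhanced-regularization} to the bipartite half-regular setting. At a high level, we use Lemma \ref{lem:pyber} iteratively to extract a nested sequence of matchings from $G$, classify them dyadically by size, identify a \emph{thick} class via a geometric-sum argument (which uses $\alpha+\beta>1$), and then take the union of the matchings in that class and prune low-degree vertices to produce the desired $16$-almost-biregular subgraph.

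First, since $G$ is $d$-half-regular at $N$ and $|M|\leq|N|$, Lemma \ref{lem:pyber} applies with $A=N$, $B=M$. Iterating the extraction---at step $i$, taking a matching $\mathcal{M}_i$ in the current graph $G_{i-1}$ such that $G_{i-1}[V(\mathcal{M}_i)]$ is $(d-i+1)$-half-regular at $V(\mathcal{M}_i)\cap N$, and setting $G_i = G_{i-1}[V(\mathcal{M}_i)]\setminus\mathcal{M}_i$---produces $\lceil d/2\rceil$ matchings with $V(\mathcal{M}_1)\supseteq V(\mathcal{M}_2)\supseteq\cdots$, sizes $|\mathcal{M}_i|\geq d-i+1$ (so $|\mathcal{M}_{\lceil d/2\rceil}|\geq d/2$), and $|\mathcal{M}_1|\leq|M|$. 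Partition the indices into dyadic classes $I_j=\{i:|M|/2^{j+1}<|\mathcal{M}_i|\leq|M|/2^j\}$ for $j=0,1,\dots,\lfloor\log(2|M|/d)\rfloor$; there are $O(\log|M|)$ classes in total.

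Declare $I_j$ \emph{thick} if $|I_j|\geq T_j:=K\,c\,(|M|/2^j)^{\alpha+\beta-1}$ for a suitable constant $K=K(\alpha,\beta)>0$, and \emph{thin} otherwise. Because $\alpha+\beta>1$, the geometric series $\sum_j T_j$ converges; combined with the lower bound $d\geq c|M|^\alpha|N|^{\beta-1}$ (from $e(G)=d|N|\geq c|M|^\alpha|N|^\beta$), for $K$ small enough the total number of indices in thin classes is bounded by a small fraction of $\lceil d/2\rceil$. So thick classes collectively contain a positive fraction of the indices, and pigeonhole among the $O(\log|M|)$ classes yields a thick $I_{j^*}$ with $p:=|I_{j^*}|\geq d/O(\log|M|)$. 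Let $\ell=\min I_{j^*}$ and $F=\bigcup_{i\in I_{j^*}}\mathcal{M}_i$. By nesting $V(F)=V(\mathcal{M}_\ell)$, $|V(F)\cap M|=|V(F)\cap N|=|\mathcal{M}_\ell|$, $\Delta(F)\leq p$, and, using $|\mathcal{M}_i|\geq|\mathcal{M}_\ell|/2$ for $i\in I_{j^*}$, $e(F)\geq p|\mathcal{M}_\ell|/2$. Iteratively deleting vertices whose degree drops below $p/6$---as in the proof of Theorem \ref{thm:enhanced-regularization}---produces $G'$ with $\delta(G')\geq p/6$, $\Delta(G')\leq p$ (hence $6$-almost-regular, so in particular $16$-almost-biregular), and $e(G')\geq p|\mathcal{M}_\ell|/6$. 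The edge-count bound $e(G')\geq\tfrac{(2^{\alpha+\beta-1}-1)c}{16}|M'|^\alpha|N'|^\beta$ then follows from $p\geq T_{j^*}\geq Kc|\mathcal{M}_\ell|^{\alpha+\beta-1}$ together with $|V(G')\cap M|^\alpha|V(G')\cap N|^\beta\leq|\mathcal{M}_\ell|^{\alpha+\beta}$; the average-degree bound $d(G')\geq d(G)/(8\log|M|)$ follows from $d(G')\geq\delta(G')\geq p/6\geq d/O(\log|M|)$ and $d(G)\leq 2d$.

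The main technical obstacle will be choosing the constant $K$ so that both the thin-sum bound and the edge-count bound are sharp enough to produce the prescribed coefficient $(2^{\alpha+\beta-1}-1)/16$ uniformly in $|M|/|N|$. The delicate regime is $|M|\ll|N|$: there the bound $d\geq c|M|^\alpha|N|^{\beta-1}$ becomes weak when $\beta<1$, so the strict inequality $\alpha+\beta>1$ must be used decisively in the geometric-series estimate, and one may need to supplement the matching extraction by a roof-based extraction using Lemma \ref{lem:roof} in order to capture enough of the edges of $G$ in this regime.
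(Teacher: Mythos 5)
Your plan correctly imports the thick/thin dyadic machinery from Theorem~\ref{thm:enhanced-regularization}, but the matching-based extraction it is built on does not actually close the argument in the regime that matters, and you have in fact half-noticed this yourself.

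Concretely: applying Lemma~\ref{lem:pyber} produces nested perfect matchings $\mathcal{M}_i$, each of which contributes exactly $|\mathcal{M}_i|\le|M|$ edges and pairs up equal numbers of $M$- and $N$-vertices. You classify the indices by $|\mathcal{M}_i|$ at dyadic scales of $|M|$, declare $I_j$ thick if $|I_j|\ge T_j:=Kc(|M|/2^j)^{\alpha+\beta-1}$, and need two things from $K$: first, that $\sum_j T_j\le d/4$ so that thick classes carry a positive fraction of the $d$ indices; second, that $K$ is bounded below by an absolute constant so that $p\ge T_{j^*}$ produces the prescribed coefficient $(2^{\alpha+\beta-1}-1)/16$. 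The first requirement forces
\[
Kc\,|M|^{\alpha+\beta-1}\cdot\frac{1}{1-2^{-(\alpha+\beta-1)}}\;\lesssim\; d\;\ge\; c\,|M|^\alpha|N|^{\beta-1},
\]
that is, $K\lesssim(|M|/|N|)^{\beta-1}=(|N|/|M|)^{1-\beta}$. Wait, that is a \emph{lower} bound on what the ratio permits only when $\beta=1$; for $\beta<1$ you need $K\lesssim(|N|/|M|)^{\beta-1}\to 0$ as $|N|/|M|\to\infty$. No single constant $K=K(\alpha,\beta)$ satisfies both requirements uniformly, so the thick/thin dichotomy collapses precisely when $|N|\gg|M|$ and $\beta<1$. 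The root cause is that matchings throw away almost all of $N$ when $|N|\gg|M|$: the $d$ matchings carry at most $d|M|$ edges in total, whereas $e(G)=d|N|$, so the extraction simply does not see enough of $G$. Your final paragraph flags exactly this and gestures at ``supplementing'' with a roof-based extraction, but no such argument is given, and a supplement would not be a small patch.

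The paper's proof resolves this by abandoning matchings entirely in favor of $N$-roofs (Lemma~\ref{lem:roof}). Each roof $R_i$ gives every vertex of the current $N$-side degree exactly one, so $e(R_i)=|N_i|$ edges are captured per layer rather than $|M_i|$, and vertices in $M$ are allowed degree up to $t=\lceil|N|/|M|\rceil$, which is precisely the slack needed in the unbalanced regime. Moreover the paper does not peel off a fixed vertex set once and for all: at each step it selects a \emph{minimal} $N_i\subseteq N_{i-1}$ with $|N_i|/|N(N_i)|\ge\mu:=|N|/|M|$, which both guarantees a small-max-degree roof via Lemma~\ref{lem:roof} and maintains the invariant $|N_i|/|M_i|\ge\mu$ throughout. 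The dyadic classification is then over $|N_i|$ at scales $|N|/2^j$ (with $\lceil\log|M|\rceil$ classes, because $|N_d|\ge|N|/|M|$), not over matching sizes at scales of $|M|$. With this bookkeeping, $d\ge\frac{c}{\mu^\alpha}|N|^{\alpha+\beta-1}$ is exactly strong enough to make the thin-sum bound hold with a constant threshold $\ve=\frac12(2^{\alpha+\beta-1}-1)$, and the invariant $|N_\ell|/|M_\ell|\ge\mu$ converts the bound on $|N_\ell|^{\alpha+\beta}$ into one on $|M_\ell|^\alpha|N_\ell|^\beta$. The union $R=\bigcup_{i\in I_j}R_i$ is then naturally $16$-almost-biregular after pruning, with an unbalanced bipartition, whereas your matching union is forced to be balanced. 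So the gap is not a matter of tuning $K$; the decomposition itself must change.
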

   \begin{proof}
   Let $\mu=|N|/|M|$ and $t=\ceil{\mu}$. Since $\mu\geq 1$, we have $t\leq 2\mu$. Let $N_1$ be a minimal subset of $N$ satisfying $|N_1|/|N_G(N_1)|\geq\mu$. Since $N$ satisfies this condition, $N_1$ exists. Let $M_1=N_G(N_1)$.
   Let $G_1=G[N_1\cup M_1]$.  By our choice of $N_1$, for every subset $X$ of $N_1$ we have
$|X|/N_{G_1}(X)|=|X|/N_G(X)|\leq \mu$. By Lemma \ref{lem:roof}, $G_1$ contains an $N_1$-roof
$R_1$ with $\Delta(R_1)\leq \ceil{\mu}=t$. Let $H_1=G_1\setminus E(R_1)$. Then, $H_1$ is a 
bipartite graph with parts $N_1$ and $M_1$ that is $(d-1)$-half-regular at $N_1$.
Recall that $|N_1|/|M_1|\geq \mu$ by our choice of $N_1$. Hence, in particular $|N_1|/ |N_{H_1}(N_1) |\geq |N_1| / |M_1| \geq \mu$. 
Now, let $N_2$ be a minimal subset of $N_1$ such that $|N_2|/|N_{H_1}(N_2)|\geq \mu$.
Since $N_1$ satisfies the condition, $N_2$ exists. Let $M_2=N_{H_1}(N_2)$. 
Let $G_2=H_1[N_2\cup M_2]$. By our choice of $N_2$, for every subset $X\subseteq N_2$, we have
$|X|/|N_{G_2}(X)|=|X|/|N_{H_1}(X)|\leq \mu$. By Lemma \ref{lem:roof}, $G_2$ contains an $N_2$-roof $R_2$ with $\Delta(R_2)\leq \ceil{\mu}=t$. Let $H_2=G_2\setminus E(R_2)$. Note that
$|N_2|/|N_{H_2}(N_2)|\geq |N_2|/|M_2|\geq \mu$.
We can continue the process to find a sequence of subgraphs $G_1\supseteq G_2\dots \supseteq G_d$
such that for each $i\in [d]$, $G_i$ is $( d - i + 1)$-half-regular at $N_i=V(G_i)\cap N$, $G_i$ contains
an $N_i$-roof $R_i$ with $\Delta(R_i)\leq t$ and for each $i\in [d-1]$, $G_{i+1}\subseteq G_i\setminus E(R_i)$.
Furthermore, for each $i\in [d]$, $|N_i|/|M_i|\geq \mu=|N|/|M|$.

Note that $|N_1|\leq |N|$ and $|N_d|\geq |N|/|M|$. For each $1\leq j\leq \ceil{\log |M|}$, let
$\I_j=\{i: \frac{|N|}{2^j}\leq |N_i|< \frac{|N|}{2^{j-1}}\}$. Then $\bigcup_{j=1}^{\ceil{\log |M|}} \I_j=[d]$. Let 
\begin{equation}\label{eq:ve-definition}
\ve=\frac{1}{2}(2^{\alpha+\beta-1}-1).
\end{equation}
For each $1\leq j \leq \ceil{\log |M|}$, we call $\I_j$ {\it thin} if $|\I_j|< \frac{\ve d}{2^{(\alpha+\beta-1) j}}$ and {\it thick} otherwise.

    Note that 
    \[\left|\bigcup \{\I_j: \I_j \mbox{ is thin} \}\right|\leq \sum_{j=1}^\infty 
    \frac{\ve d}{2^{(\alpha+\beta-1) j}}\leq \frac{d}{2}.\]
    Hence,
    \[\left|\bigcup \{\I_j: \I_j \mbox { is thick } \}\right|\geq \frac{d}{2}.\]

By the pigeonhole principle, there exists some $1\leq j\leq \ceil{\log |M|}$ such that
$\I_j$ is thick and $|\I_j|\geq \frac{d}{2\ceil{\log |M|}}$. Fix such a $j$ and let $p=|\I_j|$. Then
\begin{equation}\label{eq:p-lower}
p\geq \max\{\frac{\ve d}{2^{(\alpha+\beta-1) j}}, \frac{d}{2\ceil{\log |M|}}\}.
\end{equation}
Since $N_1\supseteq N_2\supseteq \dots \supseteq N_d$, 
$\I_j=\{\ell, \ell+1,\dots, \ell+p-1\}$ for some $\ell$.
Let $R=\bigcup_{i=\ell}^{\ell+p-1} R_i$. Then $R$ is a bipartite subgraph of $G$ with parts
$N_\ell$ and $M_\ell$. For each $i\in [\ell, \ell+p-1]$, 
$R_i$ is $1$-half-regular at $N_i$ and $|N_i|\geq |N_\ell|/2$. Hence,
\begin{equation}\label{eq:ER-lower}
e(R)=\sum_{i=\ell}^{\ell+p-1}|N_i|\geq (p/2)|N_\ell|.
\end{equation}
Let $d_M=E(R)/|M_\ell|$, $d_N=E(R)/|N_\ell|$, $\Delta_M=\max\{d_R(x): x\in M_\ell\}$
and $\Delta_N=\max\{d_R(y): y\in N_\ell\}$.
Then $d_M\geq (p/2)|N_\ell|/|M_\ell|\geq (p/2)\mu$ and $d_N\geq (p/2)$.
Also, $\Delta_M\leq pt\leq 4d_M$ and $\Delta_N\leq p\leq 2d_N$. 
Let $G'$ be obtained from $R$ by iteratively deleting vertices in $M_\ell$ whose degrees
become less than $d_M/4$ and vertices in $N_\ell$ whose degree become less than $d_N/4$. Then $e(G')\geq (1/2)e(R)$ and $G'$ is $16$-almost-biregular. 
We next argue that $e(G')\geq \frac{1}{16}(2^{\alpha+\beta-1}-1) c|M_\ell|^\alpha |N_\ell|^\beta$.  Observe that 
\[ d\geq c|M|^\alpha |N|^{\beta-1}= c(|M|/|N|)^\alpha |N|^{\alpha+\beta-1}=\frac{c}{\mu^\alpha} |N|^{\alpha+\beta-1}.\]
By  \eqref{eq:p-lower} and \eqref{eq:ER-lower}, and that $\frac{|N|}{2^j} \leq |N_\ell|< \frac{|N|}{2^{j-1}}$, we have
\begin{equation}\label{eq:G'-bound}
e(G')\geq \frac{1}{2}e(R)\geq \frac{1}{4} p|N_\ell|\geq \frac{1}{4} \frac{\ve d}{2^{(\alpha+\beta-1) j}}|N_\ell|\geq  
\frac{\ve}{4}\frac{c}{\mu^\alpha} \frac{|N|^{\alpha+\beta}}{2^{(\alpha+\beta)j}}
\geq \frac{\ve}{16} \frac{c}{\mu^\alpha} |N_\ell|^{\alpha+\beta}.
\end{equation}
Recall that $\frac{|N_\ell|}{|M_\ell|}\geq \mu$. Hence $\frac{1}{\mu}\geq \frac{|M_\ell|}{|N_\ell|}$.
By \eqref{eq:ve-definition} and \eqref{eq:G'-bound}, we have
\[e(G')\geq \frac{\ve c}{16} |M_\ell|^\alpha |N_\ell|^\beta 
=\frac{( 2^{\alpha+\beta-1}  - 1)c}{32} |M_\ell|^\alpha |N_\ell|^\beta.\]
Also, since $e(G')\geq (p/4)|N_\ell$ and $v(G')\leq 2|N_\ell|$, $d(G')\geq p/4\geq \frac{d}{8\ceil {\log |M|}}\geq \frac{d(G)}{8\ceil{\log |M|}}$. 
\end{proof}
Now we are ready to prove Theorem \ref{thm:biregular}.

\begin{theorem}[Theorem \ref{thm:biregular} restated]
    Let $0<\alpha, \beta\leq 1$ be reals satisfying  $\alpha+\beta>1$.
    There exists a positive constant $\lambda=\lambda(\alpha,\beta)$
    such that the following holds. 
    Let $G$ be a bipartite graph with an ordered partition $(M, N)$, with $|M| = m$, $|N| = n$, and $m \leq n$ such that $e(G) \geq cm^{\alpha}n^{\beta}$ and $d(G)\geq 8$. Then $G$ contains a $16$-almost biregular subgraph $G'$, with a partition $(M',N')$ where $M' \subseteq M, N' \subseteq N$, $|M'| = m', |N'| = n'$ such that  $e(G') \geq  \lambda c (m')^{\alpha}(n')^{\beta}$ and $d(G') \geq \frac{d(G)}{64\log m}$. 
\end{theorem}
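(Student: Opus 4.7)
The plan is to prove Theorem \ref{thm:biregular} by chaining the two auxiliary lemmas already established: first use Lemma \ref{lem:onesideregular} to pass from $G$ to a half-regular subgraph, and then use Lemma \ref{lem:halfreg-almreg} to upgrade that half-regular subgraph to one that is $16$-almost-biregular. Concretely, I would apply Lemma \ref{lem:onesideregular} to $G$ with its bipartition $(M,N)$ and density parameters $\alpha,\beta,c$ to obtain a subgraph $G_1 \subseteq G$ with parts $M_1\subseteq M$, $N_1\subseteq N$ satisfying $e(G_1)\geq \frac{c}{2^{2+1/\alpha+1/\beta}}|M_1|^\alpha|N_1|^\beta$, half-regular on the larger of the two sides, and $d(G_1)\geq d(G)/8$.

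The second step is to feed $G_1$ into Lemma \ref{lem:halfreg-almreg}. A minor technicality arises because Lemma \ref{lem:halfreg-almreg} demands that the half-regular side be the \emph{larger} side; although $|M|\le|N|$ in the original graph, after pruning we could have $|M_1|>|N_1|$. I would therefore do a brief case split. If $|M_1|\leq|N_1|$, apply Lemma \ref{lem:halfreg-almreg} directly to $(M_1,N_1)$ with exponents $(\alpha,\beta)$. If instead $|M_1|>|N_1|$, relabel by setting $\tilde M = N_1$, $\tilde N = M_1$, and apply Lemma \ref{lem:halfreg-almreg} to $(\tilde M,\tilde N)$ with exponents swapped to $(\beta,\alpha)$; this is legitimate because the hypothesis $0<\alpha,\beta\leq 1$, $\alpha+\beta>1$ is symmetric in $\alpha$ and $\beta$. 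Relabeling the output back so that the part contained in $M$ is called $M'$ and the part contained in $N$ is called $N'$, in either case we obtain a $16$-almost-biregular subgraph $G'$ with
\[
e(G')\geq \frac{(2^{\alpha+\beta-1}-1)\,c}{16\cdot 2^{2+1/\alpha+1/\beta}}\,|M'|^\alpha|N'|^\beta,
\]
so the constant $\lambda := \dfrac{2^{\alpha+\beta-1}-1}{2^{6+1/\alpha+1/\beta}}$, depending only on $\alpha,\beta$, does the job.

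For the degree bound, Lemma \ref{lem:halfreg-almreg} provides $d(G')\geq \frac{d(G_1)}{8\log|\tilde M|}$, where $|\tilde M|=\min(|M_1|,|N_1|)\leq m$ in both cases above. Combined with $d(G_1)\geq d(G)/8$ from Lemma \ref{lem:onesideregular}, this yields $d(G')\geq \frac{d(G)}{64\log m}$, matching the conclusion.

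The main obstacle I anticipate is purely bookkeeping rather than a new combinatorial idea: one must verify that, even after the possible swap needed in the case $|M_1|>|N_1|$, the final inequality retains the correct exponents $\alpha$ on $|M'|$ and $\beta$ on $|N'|$ (and not the swap of these), and that the $\log$ factor in the degree bound remains controlled by $\log m$ rather than by $\log|N_1|$ or $\log|M_1|$ in some unfavorable way. Both issues are handled by the symmetry of the hypothesis on $(\alpha,\beta)$ and the fact that the side used in the denominator of the $\log$ is always the smaller side of $G_1$, which is bounded by $m$.
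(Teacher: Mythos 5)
Your proposal is correct and follows the same two-step route as the paper: first apply Lemma \ref{lem:onesideregular} to extract a half-regular subgraph $G_1$ with the density and average-degree guarantees, then feed it into Lemma \ref{lem:halfreg-almreg}, performing a case split on which of $|M_1|,|N_1|$ is smaller (the paper does exactly this, applying the lemma to the ordered pair $(M_0,N_0)$ or $(N_0,M_0)$ as needed). Your constant $\lambda$ and your handling of the exponent swap and the $\log m$ bound also match the paper's argument.
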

\begin{proof}
Let $\lambda= \frac{(2^{\alpha + \beta - 1} - 1)  }{2^{6 + \frac{1}{\alpha}  + \frac{1}{\beta}}}$.
 Applying Lemma \ref{lem:onesideregular}, $G$ contains a subgraph
$G_0$ with parts $M_0\subseteq M, N_0\subseteq N$ such that $G$ is half-regular at the larger side of $(M_0,N_0)$ and $e(G_0)\geq \frac{c}{2^{2 + \frac{1}{\alpha} + \frac{1}{\beta}}}m_0^\alpha n_0^\beta$, and $d(G_0) \geq \frac{1}{8} d(G)$ where $m_0=|M_0|, n_0=|N_0|$.
If $n_0\geq m_0$, then we apply Lemma~\ref{lem:halfreg-almreg} to $G_0$ and the ordered partition $(M_0,N_0)$. If $m_0\geq n_0$, then we apply Lemma \ref{lem:halfreg-almreg} to $G_0$ and the ordered partition $(N_0,M_0)$. In either case, we obtain
a $16$-almost- biregular graph $G'$  with  parts $M'\subseteq M_0$ and $N'\subseteq N_0$ such that $e(G') \geq \frac{(2^{\alpha + \beta - 1} - 1) c }{2^{6 + \frac{1}{\alpha}  + \frac{1}{\beta}}} (m')^{\alpha}(n')^{\beta}$ and that $d(G')\geq \frac{d(G_0)}{8\log m_0} \geq \frac{d(G)}{64 \log m}$.
\end{proof}

For applications in this paper, we will use the following corollary of Theorem \ref{thm:biregular}.

\begin{corollary} \label{cor:biregular}
    Let $0<\alpha, \beta\leq 1$ be reals satisfying  $\alpha+\beta>1$.
    There exists a  positive constant $\lambda'=\lambda'(\alpha,\beta)$ 
    such that the following holds. 
    Let $G$ be a bipartite graph with an ordered partition $(M, N)$, with $|M| = m$, $|N| = n$, and $m \leq n$ such that $e(G) \geq c(m^{\alpha}n^{\beta}+n\log m)$. Then $G$ contains a $16$-almost biregular subgraph $G'$, with a partition $(M',N')$ where $M' \subseteq M, N' \subseteq N$, $|M'| = m', |N'| = n'$ such that  $e(G') \geq  \lambda' c[(m')^{\alpha}(n')^{\beta}+ m'+n']$.    
\end{corollary}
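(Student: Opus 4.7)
The plan is to apply Theorem~\ref{thm:biregular} directly to $G$ and extract the $m'+n'$ contribution in the conclusion from the lower bound on $d(G')$ that the enhanced regularization provides. The role of the $cn\log m$ term in the hypothesis is precisely to force $d(G)$ to be large enough that $d(G')$ is at least a constant multiple of $c$, which in turn supplies the $m'+n'$ term via the identity $2e(G') = d(G')(m'+n')$.

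In detail, since $m\le n$ and $e(G)\ge cn\log m$, we have
\[
d(G) = \frac{2e(G)}{m+n} \ge \frac{e(G)}{n} \ge c\log m.
\]
Assuming that $c\log m \ge 8$ (so that $d(G)\ge 8$), Theorem~\ref{thm:biregular} applies to $G$ with the weaker hypothesis $e(G)\ge cm^\alpha n^\beta$ and produces a $16$-almost-biregular subgraph $G'$ with parts of sizes $m',n'$ satisfying both
\[
e(G') \ge \lambda c(m')^\alpha (n')^\beta \qquad\text{and}\qquad d(G') \ge \frac{d(G)}{64\log m} \ge \frac{c}{64},
\]
where $\lambda = \lambda(\alpha,\beta)>0$ is the constant from Theorem~\ref{thm:biregular}. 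The second inequality, combined with $2e(G') = d(G')(m'+n')$, gives $e(G') \ge \frac{c}{128}(m'+n')$. Averaging these two lower bounds on $e(G')$ yields
\[
e(G') \ge \tfrac12\min\!\left(\lambda,\tfrac{1}{128}\right)\cdot c\cdot \bigl[(m')^\alpha(n')^\beta + (m'+n')\bigr],
\]
so we take $\lambda' := \tfrac12 \min(\lambda(\alpha,\beta),\tfrac{1}{128})$.

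The only remaining concern, which I expect to be the main technical obstacle, is the degenerate regime $c\log m < 8$, where Theorem~\ref{thm:biregular} does not apply directly. In this regime $c$ is bounded by an absolute constant once $m$ is at least a small constant depending on $\alpha,\beta$, and the target inequality reduces to showing $e(G') = \Omega\bigl((m')^\alpha(n')^\beta + m' + n'\bigr)$ for some $16$-almost-biregular subgraph $G'$, with an implicit constant depending only on $\alpha,\beta$. I would handle this either by a direct pigeonhole argument selecting a small half-regular star-like subgraph, or by a routine modification of Lemma~\ref{lem:onesideregular} that relaxes the $d(G)\ge 8$ hypothesis in this bounded-$c$ regime, absorbing the resulting constants into the final $\lambda'$. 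This edge case introduces no new ideas beyond those already present in the proof of Theorem~\ref{thm:biregular}.
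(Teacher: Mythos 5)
Your main argument is identical to the paper's proof: from $e(G)\ge cn\log m$ and $m\le n$ you get $d(G)\ge c\log m$, so Theorem~\ref{thm:biregular} yields $d(G')\ge c/64$, hence $e(G')\ge\frac{c}{128}(m'+n')$, and averaging this with $e(G')\ge\lambda c(m')^\alpha(n')^\beta$ gives the claim with $\lambda'=\min\{\lambda/2,\,1/256\}$, exactly as in the paper. The degenerate regime $c\log m<8$ that you flag is in fact silently passed over by the paper's own proof (Theorem~\ref{thm:biregular} requires $d(G)\ge 8$); your instinct that it is easily absorbed is correct, and the cleanest repair is simply to observe that a single edge $K_2$ is trivially $16$-almost-biregular with $e(K_2)=1\ge\tfrac{1}{3}\bigl[(m')^\alpha(n')^\beta+m'+n'\bigr]$, which suffices once $c$ is bounded by an absolute constant, as it is in this regime for $m\ge 2$.
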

\begin{proof}
By Theorem \ref{thm:biregular}, there exists a subgraph $G'$ with
$e(G') \geq  \lambda c (m')^{\alpha}(n')^{\beta}$ and $d(G') \geq \frac{d(G)}{64 \log m}\geq \frac{c\log m}{64 \log m}=\frac{c}{64}$. Hence, $e(G')\geq \frac{1}{2}[\lambda c (m')^{\alpha}(n')^{\beta}+\frac{c}{128}(m'+n')]\geq \lambda'c[(m')^\alpha (n')^\beta+m'+n']$, where $\lambda'=\min\{\frac{\lambda}{2}, \frac{1}{256}\}$.    
\end{proof}

\section{Proof of Theorem \ref{thm:main-kst}}
\label{sec:kst-regular}
To prove Theorem \ref{thm:main-kst}, we first consider the problem in the setting when the host graph is almost-bi-regular with large minimum degree. For a bipartite graph $G$ with a partition $(M,N)$, we let $d_M(G)$ denote the minimum degree of a vertex in $M$ and $d_N(G)$ the minimum degree of a vertex in $N$. When the context is clear, we will just write $d_M,d_N$ for $d_M(G), d_N(G)$, respectively.

We now introduce some definitions that will be used throughout the section. Some definitions originated in \cite{CJL} and followup works, some are new.
Let $T$ be a tree and and $v$ a nonleaf of $T$. Suppose $v$ has degree $m$.
Then there is a unique collection of edge-disjoint subtrees $T_1,\dots T_m$
each of which has $v$ as a leaf and whose union is $T$. We call these $m$ subtrees the subtrees of $T$ {\it split by $v$}.

\begin{definition} [admissible trees, linked tuples]
Let $G$ be a graph. Let $h\geq 2$ be a fixed positive integer. Let $T$ be a tree whose vertices are labeled with $\{1,\dots, v(T)\}$. We will view each subtree of $T$ also as a vertex-labeled tree.
If $v_1,\dots, v_p$ are the leaves of $T$ in the natural order determined by their labels, we call $\langle v_1,\dots, v_p \rangle$
the {\it leaf vector} of $T$. If $F$ is a copy of $T$ in $G$, we call image of the leaf vector of $T$ in $F$ the {\it leaf vector} of $F$. Given a family $\F$
of copies of $T$ in $G$ and a member $F\in \F'$, we say that  $F$ is {\it $h$-heavy} in $\F$ if there are at least $h^{e(T)^2}$ members of $\F$ that share the same leaf vector with $F$ and we say that $F$ is $h$-light in $\F$ if it is not $h$-heavy in $\F$.

Given a subtree $D$ of $T$ and  $F\in \F$, we let $F[D]$ denote the image of $D$ in $F$ and call it the {\it $D$-projection of $F$}. We also let $\F[D]=\{F[D]: F\in \F\}$. 
We say that $F$ is {\it $h$-admissible} in $\F$ if either $F=K_2$ or otherwise for every nonleaf $v$ of $F$ each 
subtree of $F$ split by $v$ is $h$-light in $\F$. Given 
a  $p$-tuple $\langle y_1,\dots, y_p\rangle$ of vertices in $G$, we say that $\langle y_1,\dots, y_p\rangle $ is {\it $(T,h)$-linked} in $\F$ if there are $h$ members of $\F$ that have the leaf vector $\langle y_1,\dots, y_p\rangle$ but are otherwise vertex-disjoint. In the special case of $T$ being a $k$-path $P_{k+1}$, instead of saying a tuple $\langle x,y\rangle $ is $(P_{k+1},h)$-linked in $\F$, we will slightly abuse notation and just say that $x,y$ are {\it $(k,h)$-linked} in $\F$. If $\F$ is the family of all copies of $T$ in $G$, then instead of saying a member $F$ or a leaf vector has a certain property in $\F$ we will just say the member or the leaf vector has that property in $G$.
\end{definition}

We next give a few useful lemmas on admissible trees and linked tuples.

\begin{lemma} \label{lem:admissible-paths}
  Let $\ell,\eta\geq 2$ be integers. Let $P$ be vertex-labeled path of length $\ell$.  Let $G$ be a graph. 
  If $F$ is a copy of $P$ that is $\eta$-heavy in $G$, then for some $2\leq j\leq \ell$, $F$ contains a $j$-path that is $\eta$-heavy and $\eta$-admissible in $G$.  
\end{lemma}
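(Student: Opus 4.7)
The plan is to prove this by induction on $\ell$, exploiting the fact that the key quantity $e(T)$ strictly decreases when we pass to a proper subtree split off by an internal vertex, while the ``$\eta$-heaviness'' obstruction to admissibility hands us exactly such a strictly smaller heavy subtree.

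More concretely, suppose $F$ is an $\eta$-heavy copy of an $\ell$-path $P$ in $G$. If $F$ itself is $\eta$-admissible, we are done taking $j=\ell$. Otherwise, by the very definition of admissibility, there exists a nonleaf $v$ of $F$ such that one of the two subpaths of $F$ split by $v$, call it $F'$, is $\eta$-heavy. Let $j'$ denote the length of $F'$, so $1 \leq j' \leq \ell - 1$. The first observation to record is that $j' \geq 2$: indeed, a $K_2$ can never be $\eta$-heavy in a simple graph, because the only copy of $K_2$ with a prescribed ordered pair of endpoints is the edge itself, whereas $\eta$-heaviness would require at least $\eta^{e(K_2)^2} = \eta \geq 2$ such copies. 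Thus $F'$ is a genuine path of length in $[2, \ell-1]$ that is $\eta$-heavy. Applying the inductive hypothesis to $F'$ yields a $j$-subpath of $F'$ (and therefore of $F$) that is both $\eta$-heavy and $\eta$-admissible in $G$, with $2 \leq j \leq j' \leq \ell$, completing the inductive step.

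The base case $\ell = 2$ is clean: if $F$ is an $\eta$-heavy $2$-path and $v$ is its unique internal vertex, the two subpaths split by $v$ are both $K_2$'s and hence $\eta$-light by the same simple-graph observation, so $F$ is automatically $\eta$-admissible, and one can take $j = 2$.

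The argument is essentially a termination argument, so there is no serious obstacle. The only point that requires slight care is ensuring the lower bound $j \geq 2$ in the conclusion; the entire burden there is the elementary remark that $K_2$ can never be $\eta$-heavy for $\eta \geq 2$, which keeps the recursion from shrinking all the way down to a single edge. I would state that remark up front as a one-line observation and then invoke it tacitly at each step of the induction.
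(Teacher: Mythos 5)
Your argument is correct and is essentially the same proof as the paper's, merely phrased as an induction on $\ell$ rather than as an extremal ("minimal heavy subpath") argument; the two are interchangeable here. The one genuine addition in your write-up is the explicit observation that $K_2$ is never $\eta$-heavy for $\eta\geq 2$, which is needed to guarantee $j\geq 2$ in the conclusion; the paper's proof (take a minimal subpath $P_0$ with $F[P_0]$ heavy and note it must be admissible by minimality) leaves that point implicit, so your remark is a small but worthwhile clarification rather than a different route.
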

\begin{proof}
    Let $F$ be a copy of $P$ in $G$ that is $\eta$-heavy in $G$. Let $P_0$ be a minimal subpath of $P$ such that $F[P_0]$ is $\eta$-heavy in $G$. Suppose $F[P_0]$ is not $\eta$-admissible, then there exists some nonleaf $w$ in it such that one of the two subpaths split by $w$ is $\eta$-heavy in $G$, contradicting our choice of $P_0$. So $F[P_0]$ is $\eta$-admissible in $G$. 
\end{proof}

Recall that a {\it spider} $T$ with $p$ legs is a tree consisting of $p$ paths sharing one common endpoint $u$ but are otherwise vertex-disjoint. If $\langle v_1,\dots, v_p\rangle$ is the leaf vector of $T$ and $\ell_i$ is the length of the $u,v_i$-path for each $i\in [p]$, we call $\langle \ell_1,\dots, \ell_p\rangle$ the {\it length vector} of $T$.

\begin{lemma} \label{lem:admissible-spiders}
Let $k,s\geq 2$ be integers. Let $T$ be a vertex-labled spider of $s$ legs each having length $k$.
Let $G$ be a graph. Let $\F$ be a family of copies of $T$ in $G$ each of which contains no $j$-path that is $\eta$-heavy in $G$,
  for any $2\leq j\leq k$.
  If $F$ is a member of $\F$ that is $\eta$-heavy in $\F$
  then there exists an $s$-legged subspider $D$ of $T$ such that $F[D]$ 
  is $\eta$-heavy and $\eta$-admissible in $\F[D]$. Furthermore, the sum of the lengths of any two legs of $D$ is larger than $k$, and thus $D$ has at most one leg of length one.
  \end{lemma}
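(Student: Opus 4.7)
The plan is to take $D$ to be an $s$-legged subspider of $T$ minimizing $e(D)$ subject to the condition that $F[D]$ is $\eta$-heavy in $\F[D]$. Since $F = F[T]$ is $\eta$-heavy in $\F = \F[T]$ by hypothesis, the full spider $T$ itself is a candidate and a minimizer exists. I plan to verify that this $D$ automatically satisfies both admissibility and the leg-sum property, using the hypothesis on non-heavy paths in $F$.

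For admissibility, fix any nonleaf $v$ of $F[D]$ and any subtree $S$ of $F[D]$ split by $v$. If $S$ is a subpath, i.e., a $j$-path with $j\ge 1$, then the case $j=1$ is immediate (a single edge is determined by its endpoints), and for $j\ge 2$ the hypothesis rules out $\eta$-heaviness of $S$ in $G$, which in turn rules out $\eta$-heaviness in $\F[D]$ since $\F[D]$ projects into the family of all copies of the abstract subtree in $G$. If $S$ is the center-containing subspider at $v$, then $S = F[D']$ where $D'\subsetneq D$ is a proper $s$-legged subspider (one leg shortened), so $e(D') < e(D)$ and minimality forces $F[D']$ to be $\eta$-light in $\F[D']$, yielding the desired lightness of $S$ in $\F[D]$.

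For the leg-sum property, suppose for contradiction that $\ell_i + \ell_j \le k$ for some pair $i,j$. The combined $(i,j)$-path of $F[D]$ is an $(\ell_i+\ell_j)$-path in $F$ with $2 \le \ell_i+\ell_j \le k$, so by hypothesis it is not $\eta$-heavy in $G$; hence fewer than $\eta^{(\ell_i+\ell_j)^2}$ distinct copies of $P_{\ell_i+\ell_j+1}$ in $G$ share the leaf vector $\langle v_i, v_j\rangle$. Among the $\ge \eta^{e(D)^2}$ copies of $D$ in $\F[D]$ sharing $F[D]$'s leaf vector, pigeonhole produces at least $\eta^{e(D)^2 - (\ell_i+\ell_j)^2}$ copies whose combined $(i,j)$-legs trace out a common path $P^*$. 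Let $D^{\flat}$ be the $s$-legged subspider obtained from $D$ by shortening legs $i,j$ to length $1$; then $e(D^{\flat}) = e(D) - \ell_i - \ell_j + 2$, and these copies project to distinct copies of $D^{\flat}$ in $\F[D^{\flat}]$ sharing the leaf vector determined by $P^*$'s length-$1$ initial sub-legs together with $v_c$ for $c \ne i,j$. An elementary algebraic check using $e(D) \ge \ell_i+\ell_j$ confirms that this count is at least $\eta^{e(D^{\flat})^2}$ provided $\ell_i+\ell_j \ge 3$, producing a strictly smaller $s$-legged subspider with a heavy projection in $\F[D^{\flat}]$, contradicting minimality. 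The boundary case $\ell_i = \ell_j = 1$ (with $k\ge 2$) is handled by applying the same pigeonhole to the $2$-path $v_i$-$u$-$v_j$ to bound the number of possible centers among the heavy copies, from which the contradiction follows analogously.

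The main obstacle lies in the bookkeeping around which member of $\F$ witnesses heaviness at each stage: the common path $P^*$ need not equal $F$'s own combined $(i,j)$-path, so the heavy projection in $\F[D^{\flat}]$ is initially that of some auxiliary $F^* \in \F$ rather than $F$ itself. I plan to reconcile this either by broadening the minimality condition to range over all members of $\F$ sharing $F$'s leaf vector on $T$ (all of which are equally heavy relatives and play the same role), or by a further pigeonhole step that controls the discrepancy between $F$'s own $1$st-vertex-on-leg data and that of $P^*$ using the hypothesis applied to the individual legs.
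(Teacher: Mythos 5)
Your choice of $D$ as a minimal $s$-legged subspider with $F[D]$ $\eta$-heavy in $\F[D]$, and your argument for $\eta$-admissibility via the dichotomy ``subtree is a path / subtree is a smaller $s$-legged subspider,'' match the paper's proof. The gap is in your argument for the leg-sum property, where you take a genuinely different route (pigeonhole down to a smaller spider $D^{\flat}$) and the arithmetic does not close.

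You need $\eta^{\,e(D)^2-(\ell_i+\ell_j)^2}\ge \eta^{\,e(D^{\flat})^2}$ with $e(D^{\flat})=e(D)-\ell_i-\ell_j+2$. Writing $E=e(D)$, $L=\ell_i+\ell_j$, this reduces to
\[
E^2-L^2\ \ge\ (E-L+2)^2\quad\Longleftrightarrow\quad (E-L)(L-2)\ \ge\ 2,
\]
and your claimed side condition $\ell_i+\ell_j\ge 3$ is not sufficient. When $s=2$ we have $E=L$, so the left side is $0$; the pigeonhole then yields only $\eta^{\,0}=1$ copy, nowhere near $\eta^{\,e(D^{\flat})^2}=\eta^4$. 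Even for $s=3$ it fails, e.g.\ $k=3$, legs $(1,2,1)$, $i,j$ the legs of length $1,2$: here $E=4$, $L=3$, and $(E-L)(L-2)=1<2$, so $\eta^{7}<\eta^{9}=\eta^{\,e(D^{\flat})^2}$. On top of this, as you note yourself, the heavy projection you extract lives over some auxiliary $F^*$ rather than $F$, so even when the count works you have not produced a violation of \emph{your} minimality condition without further repair.

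The paper avoids the smaller-spider detour entirely. It shows directly that if the $(i,j)$-pair of legs were $\eta$-light (fewer than $\eta^{(\ell_i+\ell_j)^2}$ members of $\F[L_i\cup L_j]$ with leaf vector $\langle y_i,y_j\rangle$), then multiplying by $\eta^{\ell_q^2}$ for each other leg $q$ (lightness of those legs follows from the hypothesis that members of $\F$ contain no $\eta$-heavy $j$-path, $2\le j\le k$) bounds the count of members of $\F[D]$ sharing $F[D]$'s leaf vector by $\eta^{(\ell_i+\ell_j)^2+\sum_{q\ne i,j}\ell_q^2}\le \eta^{e(D)^2}$, contradicting heaviness. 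This gives heaviness of $F[L_i\cup L_j]$ in $G$, and then the hypothesis on $F$ forces $\ell_i+\ell_j>k$. That route never needs $e(D^{\flat})^2$ to fit inside a budget, and it works uniformly down to $s=2$ where your approach collapses.
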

  \begin{proof}
    Let $F$ be a member of $\F$ that is $\eta$-heavy in $\F$.
    If $F$ is $\eta$-admissible in $\F$ then we are done by taking $D=T$.
    So, suppose $F$ is not $\eta$-admissible in $\F$.
    By definition, there exists a nonleaf $v$ of $T$ such that  for some  subtree $D$ of $T$ split by $v$,
    $F[D]$ is is $\eta$-heavy in $\F[D]$. Since $F$ contains no $j$-path that is $\eta$-heavy in $G$, for any $2\leq j\leq k$,  $v$ must be an internal vertex on some leg of $T$ and $D$ is an $s$-legged subspider of $T$. Let $D$ be a minimal $s$-legged subspider of $T$ such that $F[D]$ is $\eta$-heavy in $\F[D]$.
    By the minimality of $D$, $F[D]$ is $\eta$-heavy and $\eta$-admissible in $\F[D]$. 
Let $L_1,\dots, L_s$ be the legs of $D$ with lengths $\ell_1,\dots, \ell_s$, respectively.  Let $\langle y_1,\dots, y_s\rangle$ be the leaf vector of $F[D]$. Consider any $i,j\in [s], i\neq j$.
Suppose the number of members of $\F[L_i\cup L_j]$ that have leaf vector $\langle y_i,y_j\rangle$ is less than $\eta^{(\ell_i+\ell_j)^2}$. Then since members of $\F$ contain no $j$-path that is $\eta$-heavy in $G$, for $2\leq j\leq k$, the number of members of $\F[D]$ 
that have leaf vector $\langle y_1,\dots, y_s\rangle$ is less than $\eta^{(\ell_i+\ell_j)^2}\cdot \prod_{q\in [s]\setminus\{i,j\}} \eta^{\ell_q^2}<\eta^{e(D)^2}$, contradicting $F[D]$ being $\eta$-heavy in $\F[D]$. Hence, in particular, $F[L_i\cup L_j]$ is a $\eta$-heavy $(\ell_i+\ell_j)$-path in $G$. But $F$ does not contain any $\eta$-heavy  $j$-path in $G$, for any $2\leq j\leq k$.
So, we must have $\ell_i+\ell_j>k$. In particular, there can be at most one $i$ with $\ell_i=1$.
\end{proof}

The following lemma is used in most of the recent works on subdivisions (see \cite{CJL, Janzer-bipartite, Janzer-longer, Jiang-Qiu, Jiang-Qiu2} etc) for trees $T$ that are paths or spiders. We give a more general version here.

\begin{lemma} \label{lem:Th-linked}
    Let $G$ be a graph. Let $h,t,p$ be integers, where $t\geq p \geq 2$ and $h\geq t$. Let $T$ be a vertex-labeled tree with $t$ edges  and $p$ leaves.
    Let $\F$ be a collection of copies of $T$ in $G$ such that all the members are $h$-admissible in $\F$ and share the same leaf vector $\langle y_1,\dots, y_p\}$.
    If $|\F|\geq (1/2) h^{t^2}$, then $\langle y_1,\dots, y_p\rangle$ is $(T,h)$-linked in $\F$.
\end{lemma}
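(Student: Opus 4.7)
The plan is a greedy construction. I would build the desired $h$ copies one at a time, maintaining the invariant that the chosen copies are pairwise internally disjoint, i.e.\ share only the common leaves $y_1,\dots,y_p$. Pick any $F_1\in\F$. Suppose $F_1,\dots,F_j\in\F$ have been selected for some $0\leq j<h$, pairwise internally disjoint, and let $S=\bigcup_{i=1}^j V(F_i)\setminus\{y_1,\dots,y_p\}$ be the set of internal vertices used. Since $v(T)=t+1$ and $T$ has $p$ leaves, each $F_i$ contributes exactly $t+1-p$ internal vertices, so $|S|\leq (h-1)(t+1-p)$. Call $F\in\F$ \emph{bad} if $V(F)\cap S\neq\emptyset$; I want to show that the number of bad copies is strictly less than $|\F|$, so that a good $F_{j+1}$ can be selected.

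For the counting, by a union bound,
\[
 \#\{\text{bad }F\}\ \leq\ \sum_{v\in S}\sum_{\substack{u\in V(T)\\ u\text{ nonleaf}}}|N_{v,u}|,\qquad\text{where }N_{v,u}=\{F\in\F:\phi_F(u)=v\}.
\]
The key point is to bound $|N_{v,u}|$ via admissibility. Fix any subtree $D$ of $T$ split by $u$. The leaves of $D$ are $u$ together with a subset of leaves of $T$; since every $F\in\F$ shares the leaf vector $\langle y_1,\dots,y_p\rangle$, the leaf vector of $F[D]$ for $F\in N_{v,u}$ is completely determined by $v$. By $h$-admissibility of any member of $N_{v,u}$, the number of $F'\in\F$ with $F'[D]$ having this same leaf vector is strictly less than $h^{e(D)^2}$, hence $|N_{v,u}|<h^{e(D)^2}$. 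Because $u$ is a nonleaf, $\deg_T(u)\geq 2$, so the subtrees split by $u$ have edge counts summing to $t$ with at least two summands, and I can choose $D$ with $e(D)\leq t-1$. This yields $|N_{v,u}|<h^{(t-1)^2}$.

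Combining these bounds with $|S|\leq (h-1)(t+1-p)$ and the fact that $T$ has $t+1-p\leq t-1$ nonleaf vertices gives
\[
 \#\{\text{bad }F\}\ \leq\ (h-1)(t+1-p)^2\,h^{(t-1)^2}\ \leq\ (h-1)(t-1)^2\,h^{(t-1)^2}.
\]
Since $|\F|\geq h^{t^2}/2$, it suffices to verify $2(h-1)(t-1)^2<h^{2t-1}$, which closes the greedy step. For $t\geq 3$ this follows from $h^{2t-1}\geq h\cdot t^{2t-2}\geq 2ht^2$ using $h\geq t$ and $t^{2t-4}\geq 2$, while $t=2$ reduces to $2(h-1)<h^3$ and is immediate for $h\geq 2$. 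Iterating the greedy step $h$ times produces the desired $(T,h)$-linked witness.

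The main obstacle is ensuring that the elementary inequality $2(h-1)(t-1)^2<h^{2t-1}$ survives the boundary regime $h=t$ with $t$ small; the bound on $|N_{v,u}|$ must be obtained from a subtree $D$ of size at most $t-1$ (rather than a na\"ive application with $e(D)$ close to $t$), and the routine case check for $t=2$ must be carried out directly since the general exponential slack is slimmest there.
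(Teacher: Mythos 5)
Your overall strategy — greedily build the $h$ pairwise internally disjoint copies, at each stage bound the number of ``bad'' members $F\in\F$ that collide with the forbidden set $S$, and show this bad count is less than $|\F|$ — is the same strategy as the paper (the paper phrases it as taking a maximum pairwise-disjoint subcollection and arguing by contradiction, but the mechanism is identical). The problem is in your key counting step.

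You claim that for a \emph{single} subtree $D$ split by the nonleaf $u$, $h$-admissibility gives $|N_{v,u}|<h^{e(D)^2}$. This does not follow from the definitions. Admissibility says that $F[D]$ is $h$-light in $\F[D]$, where $\F[D]=\{F'[D]:F'\in\F\}$ is a \emph{set} of $D$-projections; $h$-lightness therefore bounds the number of \emph{distinct} $D$-projections in $\F[D]$ sharing the leaf vector of $F[D]$ by $h^{e(D)^2}$. It does not bound the number of $F'\in\F$ whose $D$-projection has that leaf vector — many $F'\in N_{v,u}$ can share the same $D$-projection while differing on the other subtrees split by $u$. (That the paper uses the product over \emph{all} subtrees split by $u$, rather than just one, is precisely a symptom of this distinction; a single subtree's lightness is not enough.) The correct step is to note that $F$ is uniquely determined by the tuple $(F[T_1],\dots,F[T_\ell])$, where $T_1,\dots,T_\ell$ are \emph{all} the subtrees split by $u$, each of which has leaf vector determined by $v$ and each of which has fewer than $h^{e(T_i)^2}$ admissible projections; this gives $|N_{v,u}|<\prod_i h^{e(T_i)^2}\le h^{(t-1)^2+1}$. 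Your numerical inequality can be repaired with this exponent — one then needs $2(h-1)(t-1)^2<h^{2t-2}$, which still holds for all $h\ge t\ge 2$ — so the gap is fixable, but as written the central step of your proposal is unsound.
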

\begin{proof} First note that the claim holds trivially when $t=2$. So, we will assume $t\geq 3$.
    Let $\{F_1,\dots, F_m\}$ be a maximum subcollection of  members of $\F$  that are pairwise vertex disjoint outside $\{y_1,\dots, y_p\}$. If $m\geq h$, then we are done. Hence we may assume that $m<h$. Let $S=\bigcup_{i=1}^m V(F_i)\setminus \{y_1,\dots, y_p\}$. 
     By our assumption $|S|<ht$.
    By the maximality of $\{F_1,\dots, F_m\}$, each member of $\F$ maps some nonleaf vertex of $T$ to a vertex in $S$. Consider any nonleaf $w$ of $T$ and any vertex $v\in S$. Suppose $w$ has degree $\ell$ in $T$, and that $T_1,\dots, T_\ell$ are the subtrees of $T$ split by $w$.
    Consider any member $F$ of $\F$  that maps $w$ to $v$. Since $F$ is $h$-admissible in $\F$, by definition,  $F[T_1],\dots, F[T_\ell]$ are $h$-light in $\F$.
     Hence, the number of members of $\F$ that map $w$ to $v$ is at most $\prod_{i=1}^\ell h^{e(F_i)^2}<h^{(t-1)^2+1}$.  
    Hence, $|\F|< |S| t h^{t^2-2t+2}<t^2h\cdot h^{t^2-2t+2}<(1/2) h^{t^2}$, where we use the condition $h\geq t$. But this contradicts our assumption about $\F$, completing the proof.
\end{proof}

The next notion is implicit in earlier works, which we will make explicit here.

\begin{definition} [$\eta$-robust family]
Let $G$ be a graph.
 Let $T$ be a vertex-labeled tree. Let $\eta$ be a positive integer. Let $\F$ be a family of copies of $T$ in $G$.
 We say that $\F$ is {\it $\eta$-robust} if 
the leaf vector of each member of $\F$ is $(T,\eta)$-linked in $\F$ and 
for each pair of subtrees $T_1,T_2$ of $T$ satisfying $T_1\subseteq T_2, |T_2|=|T_1|+1$, among the members of $\F$ having the same $T_1$-projection as $F$ there are at least $\eta$ different $T_2$-projections.
\end{definition}

\begin{lemma}[Robust subfamily] \label{lem:robust-subfamily}
    Let $\ve, \mu>0$ be reals with $\mu\geq 1$. Let $\eta \geq 2$ be an integer.
    Let $T$ be a vertex-labeled tree with an ordered partition $(A,B)$, where
    $|A|=a, |B|=b$. There exists a constant $C_1 = C_1(\ve,\mu, \eta, T) >0$ such that the following holds. 
    Let $G$ be an $\mu$-almost-biregular graph with an ordered partition $(M,N)$ and minimum degree at least $C_1$.
    Let $\F$ be a family of copies of $T$ in $G$ with $A$ embedded into $M$ where each member is $\eta$-heavy and $\eta$-admissible in $\F$. If $|\F|\geq \ve e(G)d_M^{b-1}d_N^{a-1}$,
    then $\F$ contains an $\eta$-robust subfamily $\F'$ satisfying 
    $|\F\setminus \F'|\leq \ve  e(G)d_M^{b-1}d_N^{a-1}$. In particular, if $|\F| \geq 2\ve e(G)d_M^{b-1}d_N^{a-1}$, then $|\F'| \geq \frac{1}{2} |\F|$.  
\end{lemma}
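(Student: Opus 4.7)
The plan is to construct $\F'$ by iteratively removing members of $\F$ that witness a failure of one of the two $\eta$-robustness conditions, and to bound the total number of deletions. Set $\F^{(0)} = \F$. At each stage $i$, if $\F^{(i)}$ is not $\eta$-robust then either (i) some pair $T_1 \subseteq T_2$ with $|T_2|=|T_1|+1$ and some $F \in \F^{(i)}$ witness that the $T_1$-projection $\phi_1 = F[T_1]$ has fewer than $\eta$ distinct extensions in $\F^{(i)}[T_2]$ --- call this an $R_2$-event, in which case I delete every member of $\F^{(i)}$ with $T_1$-projection $\phi_1$; or (ii) some $F \in \F^{(i)}$ has a leaf vector $\vec{y}$ that is not $(T,\eta)$-linked in $\F^{(i)}$ --- call this an $R_1$-event, in which case I delete every member of $\F^{(i)}$ with leaf vector $\vec{y}$. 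Let $\F'$ be the family that remains when no violation is left.

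To bound the $R_2$-removals, I would use a greedy tree-extension count: given any $T_2$-projection $\phi_2$, the number of copies of $T$ in $G$ that restrict to $\phi_2$ is at most $\mu^{a+b-a_2-b_2}\,d_M^{b-b_2}\,d_N^{a-a_2}$, obtained by placing the $a - a_2$ remaining vertices of $A$ in $M$ and the $b - b_2$ remaining vertices of $B$ in $N$ one at a time along edges of $T$. Hence each $R_2$-event deletes fewer than $\eta\,\mu^{a+b-a_2-b_2}\,d_M^{b-b_2}\,d_N^{a-a_2}$ members. Each $T_1$-projection triggers at most one event, and the number of distinct $T_1$-projections ever arising is at most the total number of bipartition-respecting $T_1$-copies in $G$, which is $O_\mu(e(G)\,d_M^{b_1-1}\,d_N^{a_1-1})$. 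The identity $a_2+b_2 = a_1+b_1+1$ then implies that the product of these two bounds equals $O_{\mu,\eta,T}(e(G)\,d_M^{b-1}\,d_N^{a-1}/\min\{d_M,d_N\})$ for every admissible pair $(T_1,T_2)$. Summing over the finitely many such pairs, the total $R_2$-deletion count satisfies $R_2 \leq \tfrac{\ve}{2}\,e(G)\,d_M^{b-1}\,d_N^{a-1}$ once $C_1$ is chosen large enough in terms of $\ve, \mu, \eta, T$.

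To bound the $R_1$-removals, the key observation is that admissibility is preserved under passing to subfamilies: every $F \in \F^{(i)}$ remains $\eta$-admissible in $\F^{(i)}$ because lightness is closed under deletion. Applying Lemma \ref{lem:Th-linked} to the subcollection of $\F^{(i)}$ sharing a fixed leaf vector $\vec{y}$ (with $h = \eta$ and $t = e(T)$), whenever this subcollection has size at least $\tfrac{1}{2}\eta^{e(T)^2}$ the vector $\vec{y}$ is automatically $(T,\eta)$-linked in $\F^{(i)}$. Hence an $R_1$-event for $\vec{y}$ can fire only after the number of members of $\F^{(i)}$ with leaf vector $\vec{y}$ has dropped below $\tfrac{1}{2}\eta^{e(T)^2}$. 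Since $\eta$-heaviness in $\F$ gave $|\F_{\vec{y}}| \geq \eta^{e(T)^2}$ initially, and since $R_1$-events on any $\vec{z} \neq \vec{y}$ delete no member with leaf vector $\vec{y}$, at least $\tfrac{1}{2}\eta^{e(T)^2}$ earlier $R_2$-deletions of members with that leaf vector must have occurred. Therefore the number of $R_1$-events is at most $2R_2/\eta^{e(T)^2}$, and the total $R_1$-removal satisfies $R_1 \leq R_2$. Combining, $|\F \setminus \F'| = R_1 + R_2 \leq \ve\,e(G)\,d_M^{b-1}\,d_N^{a-1}$, and the ``in particular'' clause follows immediately.

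The main obstacle is the mutual cascading between the two kinds of events: $R_1$-deletions could in principle reduce some $T_2$-extension counts and trigger new $R_2$-events, which in turn might push further leaf vectors below threshold. This is resolved by noting that the upper bound on $R_2$ depends only on ambient counts in $G$ (the total number of $T_1$-copies and the per-extension degree product) and is therefore indifferent to the order or cause of deletions; the $R_1$ bound then follows from the $R_2$ bound. A minor technical point is ensuring $\eta \geq e(T)$ so that Lemma \ref{lem:Th-linked} applies, which can be arranged by replacing $\eta$ with $\max\{\eta, e(T)\}$ at the cost of enlarging $C_1$.
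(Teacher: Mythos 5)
Your proposal is correct and takes essentially the same approach as the paper: iterate two kinds of removals, bound the projection-starved deletions via ambient copy counts in $G$ (using almost-biregularity), and use Lemma~\ref{lem:Th-linked} together with the $\eta$-heaviness of $\F$ to show each linkedness-failure must be preceded by enough projection-deletions on that leaf vector. The one inaccuracy is in your closing remark: replacing $\eta$ by $\max\{\eta, e(T)\}$ does not preserve the hypothesis that members of $\F$ are $\eta$-heavy, since heaviness is monotone \emph{decreasing} in the parameter (only admissibility is monotone increasing); the clean fix is simply to record $\eta \geq e(T)$ as a standing hypothesis, which the paper tacitly assumes and which holds in every application.
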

\begin{proof} 
Let $t=e(T)$ and let
\begin{equation} \label{eq:C1-choice}
C_1=\frac{( a + b)(2\mu)^{a+b}\eta^{t^2+1}}{\ve^2}.
\end{equation}
Suppose $G$ and $\F$ satisfy the given conditions. By our assumption,  for each member $F$ of $\F$, there are at least $\eta^{t^2}$ members of $\F$ sharing the same leaf vector as $F$.
    Starting with $\F$ we iterate the following steps. Whenever there exists a member $F\in \F$ 
    and a pair of subtrees $T_1,T_2$ of $T$ with $T_1\subseteq T_2, |T_2|=|T_1|+1$ such that over all the remaining members $F'$ of $\F$ satisfying $F'[T_1]=F[T_1]$, there are fewer than $\eta$ different images of $F'[T_2]$, we say that these members become {\it $(T_1,T_2)$-bad} and remove
    them from $\F$, including $F$ itself. We will call this type of removals  type one removals.
Whenever there exists a member $F$ of $\F$ such that the number of remaining members of $\F$ that share the same leaf vector with $F$ is less than $(1/2)\eta^{t^2}$, we remove all these members (including $F$). We call this type of removals type two removals.     

    {\bf Claim 1.} For each pair of subtrees $T_1, T_2$ of $T$ with $T_1\subseteq T_2, |T_2|=|T_1|+1$, the number of members of $\F$ that become $(T_1, T_2)$-bad in the removal process is at most  
    $(\eta/C_1)e(G)\mu^{a+b-3} d_M^{b-1} d_N^{a-1}$.

    \begin{proof} [Proof of Claim 1.]
    Let $a_1=|A\cap V(T_1)|, b_1=|B\cap V(T_1)|$. We may designate any edge $uv$ of $T_1$ as a first edge, where $u\in A, v\in B$.
    There are $e(G)$ ways to embed $uv$ in $G$ such that $u\in M, v\in N$. Then there are most $(\mu d_M)^{b_1-1} (\mu d_N)^{a_1-1}$ ways to embed the rest of $T_1$ in $G$. So the number of different
    $T_1$-projections of members of $\F$ with $A\cap T_1$ embedded in $M$ is at most $e(G)(\mu d_M)^{b_1-1} (\mu d_N)^{a_1-1}$.
    Let $w$ denote the leaf of $T_2$ not in $T_1$. First suppose that $w\in A$. Let $D$ be a fixed $T_1$-projection of members of $\F$.
    Consider members $F$ of $\F$ with $F[T_1]=D$ that become $(T_1,T_2)$-bad at some point . By definition, these members have fewer than $\eta$ different $T_2$-projections at the point they become $(T_1,T_2)$-bad.
    So, there are at most $\eta (\mu d_M)^{b-b_1} (\mu d_N)^{a-a_1-1}$ such members. 
    So,  the number of members that become $(T_1, T_2)$-bad is at most 
    \begin{eqnarray*}
        &&e(G)(\mu d_M)^{b_1-1} (\mu d_N)^{a_1-1}\cdot \eta\cdot (\mu d_M)^{b-b_1}(\mu d_N)^{a-a_1 - 1}\\
        &
    =&(\eta/d_N) e(G)\mu^{a+b-3} d_M^{b-1} d_N^{a-1} \leq (\eta/C_1) e(G)\mu^{a+b-3} d_M^{b-1} d_N^{a-1},
    \end{eqnarray*}
    where we used the fact that $d_N\geq C_1$. A similar argument can be applied if $w\in B$.
    \end{proof}
    Since there are clearly at most $(a + b)2^{a+b}$ pairs of subtrees $T_1,T_2$ satisfying $T_1\subseteq T_2$
    and $|T_2|=|T_1|+1$,  the total number of members of $\F$ that are removed by type one removals is at most 
    \begin{equation}\label{eq:type-one-bound}
    s_1:=(a + b)2^{a+b}(\eta/C_1)e(G)\mu^{a+b-3} d_M^{b-1} d_N^{a-1}.
    \end{equation}
Next, observe that  when a member $F\in \F$ gets removed in a type two removal, some member of $\F$
sharing the same leaf vector must have been removed earlier in a type one removal, or else the number of members having that leaf vector would not have dropped below $(1/2)\eta^{t^2}$.
So, the number of members removed in type two removals is at most $ (1/2)\eta^{t^2} s_1$.
Hence, the total number of members of $\F$ that are removed is at most 
\[(1+(1/2)\eta^{t^2}) s_1\leq \frac{(a + b)(2\mu)^{a+b}\eta^{t^2+1}}{C_1} e(G) d_M^{b-1} d_N^{a-1}\leq \ve|\F|,
\]
as $C_1=\frac{(a + b)(2\mu)^{a+b}\eta^{t^2+1}}{\ve^2}$ and $|\F|\geq \ve e(G) d_M^{b-1}d_N^{a-1}$.
Let $\F'$ denote the remaining subfamily at the end of the removal process.
Since no more type two removals can be performed, for each $F\in \F'$, there are at least $(1/2)\eta^{t^2}$ members of $\F'$ having the same leaf vector as $F$. By Lemma \ref{lem:Th-linked}, this leaf vector is $(T,\eta)$-linked. Also, since no more type one removal can be performed, $\F'$ satisfies both conditions of being $\eta$-robust.
\end{proof}

The next three lemmas show that robust families of copies of a tree $T$ have rich structures, a property that was first utilized by Janzer \cite{Janzer-bipartite} for the case where $T$ is a spider (see also \cite{Jiang-Qiu}).
Here, we give a more general version that will be applied to both paths and spiders, both of which 
are useful for building subdivisions. 

\begin{lemma} \label{lem:robust-structure}
Let $\eta,t$ be integers, where $\eta\geq 2t$ and $t\geq 0$.
Let $T$ be vertex-labeled tree with leaf vector
$\langle v_1,\dots, v_p\rangle$, where $p\geq 2$. Let $j\in [p]$.
Let $G$ be a graph. Let $\F$ be a $\eta$-robust family of copies of $T$ in $G$.
Let $F$ be any member of $\F$ with leaf vector $\langle y_1,\dots, y_p\rangle$.
Let $x_j$ denote the unique neighbor of $y_j$ in $F$.
Let $S$ be a set of vertices in $G$ not containing $y_j$. 
Suppose $\eta \geq |S|+2t+2$. Then there exists a member $F'\in \F$ with
leaf vector $\langle y_1,\dots, y_{j-1}, y'_j,y_{j+1},\dots, y_p\rangle$,
a $y_j,y'_j$-path $Q$ of length $2t$ in $G$ such that $V(Q)$ is disjoint from $S$.
There also exists a member $F''\in \F$ with leaf vector $\langle y_1,\dots, y_{j-1}, y''_j, y_{j+1},\dots, y_p\rangle$ and a $x_j,y''_j$-path $R$ of length $2t+1$ in $G$ such that
$V(R)$ is disjoint from $S\cup y_j$.
\end{lemma}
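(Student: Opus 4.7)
The plan is to build both paths by iteratively modifying $F$ inside $\F$, alternately invoking the linking property and robustness condition~(2) that together define an $\eta$-robust family. Let $u_j$ denote the unique neighbor of the leaf $v_j$ in $T$. Given any $G^{\star}\in\F$, the linking property furnishes $\eta$ pairwise internally vertex-disjoint members of $\F$ with the same leaf vector as $G^{\star}$, and so in particular realises $\eta$ distinct possibilities for the image of $u_j$. Robustness condition~(2), applied with $T_1 = T\setminus\{v_j\}$ and $T_2 = T$, yields $\eta$ distinct possibilities for the image of $v_j$ while fixing everything else. Each single application of one of these two facts will advance the path being built by exactly one edge, by substituting either the current $u_j$-image or the current $v_j$-image.

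For the even-length $Q$: if $t=0$ I would take $F'=F$ and $y'_j=y_j$. Otherwise I would construct a sequence $F = F^{(0)}, F^{(1)}, \ldots, F^{(2t)}\in\F$ in which the odd-indexed swaps use the linking property (giving a new $u_j$-image $x^{(k)}_j$) and the even-indexed swaps use condition~(2) (giving a new $v_j$-image $y^{(k)}_j$). This produces the walk $y_j, x^{(1)}_j, y^{(1)}_j, x^{(2)}_j, \ldots, x^{(t)}_j, y^{(t)}_j$ of length $2t$, and I take $F' = F^{(2t)}$ with $y'_j = y^{(t)}_j$. For the odd-length $R$ the construction is parallel, but I begin with a leaf swap from $F$ (keeping the neighbor of the new leaf equal to $x_j$, contributing the edge $x_j y^{(1)}_j$) and then alternate for a total of $2t+1$ swaps, ending at $F'' = F^{(2t+1)}$ with $y''_j = y^{(t+1)}_j$.

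At each swap the new vertex must avoid $S$, the vertices already placed on the walk, and, in the odd case, $y_j$; the $\eta$ distinct candidates supplied by the robustness properties make this possible as long as $\eta$ strictly exceeds the number of forbidden vertices. The key structural saving in the count is that inside any single copy of $T$ the $v_j$- and $u_j$-images must differ; so at a leaf (resp.\ internal) swap the most recently placed walk vertex, which equals the current $u_j$-image (resp.\ $v_j$-image), is automatically outside the new candidate set. Using this observation, the worst case is the final leaf swap of the odd-length construction, where after accounting for the auto-exclusion there are at most $|S|+2t+1$ vertices to actively avoid among the $\eta$ candidates, matching the hypothesis $\eta\geq|S|+2t+2$.

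The one place that requires careful attention is exactly this final accounting. I must correctly identify which walk vertex is auto-excluded at each swap (only the most recent one, for structural reasons), verify that the candidates delivered by linking and robustness~(2) are indeed $\eta$ distinct vertices of $G$ rather than $\eta$ distinct copies of $T$, and check the borderline last swap of the odd-length case. Once that book-keeping is in place, the alternating construction carries through with no further difficulty.
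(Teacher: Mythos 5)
Your proposal is correct and follows essentially the same approach as the paper: alternately applying the linking property of robustness (to change the $u_j$-image) and robustness condition~(2) with $T_1 = T-v_j$, $T_2 = T$ (to change the $v_j$-image), thereby extending the walk one edge at a time while avoiding $S$ and already-placed vertices; the only cosmetic difference is that the paper packages the odd case as ``first swap the leaf to get $F^*$, then rerun the even argument from $F^*$ with $S$ replaced by $S\cup\{y_j\}$,'' whereas you describe the full alternating sequence of $2t+1$ swaps directly, which amounts to the same construction and the same bookkeeping.
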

\begin{proof} Since $\F$ is $\eta$-robust and $\eta\geq |S|+2t+2$, there exists a member $F^*$ of $\F$
that contains $T-y_j$ but maps $v_j$ to some vertex $y^*_j\notin S\cup\{ y_j\}$.
If $t=0$, then the statements of the lemma hold by taking $F'=F$ and $F''=F^*$.
Hence, we may assume that $t\geq 1$.
Let $u_j$ denote the unique neighbor of $v_j$ in $T$ (i.e. the preimage of $x_j$ in $T$).
    Let $F_0=F$. Since $\F$ is $\eta$-robust, there exist $\eta$ internally disjoint members of $\F$ with leaf vector $\langle y_1,\dots, y_p\rangle$. Since $\eta \geq |S| + 2t +2$, among these members we can find a member $F_1$ that maps $u_j$ to a vertex $x_j^1\notin S\cup\{y_1,\dots, y_p\}\cup\{ x_j\}$.
    Since $\F$ is $\eta$-robust, we can find a member $F_2$ with $F_2[T-\{v_j\}]=F_1[T-\{v_j\}]$ that maps $v_j$ to a vertex $y_j^1$ outside $S\cup \{y_1,\dots, y_p\}\cup \{x_j,x_j^1\}$.
Since $\eta \geq |S|+2t + 2$, we can run this argument repeatedly to find distinct vertices $x_j^1, y_j^1, x_j^2, y_j^2,\dots, x_j^t, y_j^t$ outside $S\cup \{y_1,\dots, y_p\}\cup\{x_j\}$ such that $x_j^1y_j^1x_j^2y_j^2\dots x_j^t y_j^t$ forms a path $Q$ of length $2t$ in $G$ and that there is a member of $\F$ with leaf vector $\langle y_1,\dots, y_{j-1}, y_j^t, y_{j+1},\dots, y_p\rangle$. So, the first statement holds with $y'_j=y_j^t$. The proof for the second statement is almost identical,
except that we start with $F_0=F^*$ with leaf vector $\langle y_1,\dots, y_{j-1}, y^*_j, y_{j+1},\dots, y_p\rangle$ and we replace $S$ with $S\cup y_j$.
\end{proof}

\begin{lemma} \label{lem:path-linkage}
Let $k,h,p,j\geq 2$ be integers, where $k\geq j$.
Let $P$ be a vertex-labeled path of length $j$.
Let $\eta=2kh$. Let $\F$ be an $\eta$-robust family of copies of $P$ in $G$.
Let $F\in \F$. Let $F_0=F$ if $k\equiv j\pmod{2}$ and let $F_0$ be a subpath of $F$ of length $j-1$ if $k\not\equiv j\pmod{2}$. 
Then the endpoints of $F_0$ are $(k,h)$-linked in $G$.
\end{lemma}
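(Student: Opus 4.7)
The plan is to build the $h$ required length-$k$ $u,v$-paths in $G$ one at a time, where $u, v$ denote the endpoints of $F_0$. Let $\ell$ be the length of $F_0$ (so $\ell \in \{j, j-1\}$ and $k \equiv \ell \pmod 2$) and put $t := (k - \ell)/2 \geq 0$; in the parity-mismatch case $\ell = j-1$ we automatically have $k > j$, forcing $t \geq 1$. Suppose $P_1, \ldots, P_i$ ($i < h$) have been built pairwise internally disjoint and let $S_i$ be the union of their internal vertex sets, so $|S_i| \leq i(k-1) < hk$.

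If $\ell = j$ and $t = 0$ (so $k = j$), the claim is immediate: $F_0 = F \in \F$, and $(P, \eta)$-linkedness of $\langle u, v \rangle$ (a consequence of $\eta$-robustness) at once yields $\eta \geq h$ pairwise internally disjoint $u,v$-paths of length $k$. When $\ell = j$ and $t \geq 1$, I apply the first ($Q$-) conclusion of Lemma~\ref{lem:robust-structure} to $F$, replacing the leaf $u$, with avoidance set $S := S_i \cup \{v\}$; the hypothesis $|S| + 2t + 2 \leq hk + k + 2 \leq 2kh = \eta$ is met. This produces some $u^* \in V(G)\setminus(S_i \cup \{u, v\})$, a $u, u^*$-path $Q$ of length $2t$ with $V(Q)\cap S = \emptyset$, and the existence of a copy of $P$ in $\F$ with leaf vector $\langle u^*, v\rangle$. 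I then \emph{reselect} such a copy: by $(P, \eta)$-linkedness of $\langle u^*, v\rangle$, there are $\eta$ pairwise internally disjoint candidates, and at most $|V(Q)\setminus\{u^*\}| + |S_i| \leq 2t + hk < \eta$ can have an internal vertex in $V(Q) \cup S_i$; let $F''$ be any survivor. Then $V(Q) \cap V(F'') = \{u^*\}$, so $P_{i+1} := Q \cup F''$ is a simple $u,v$-path of length $2t + j = k$ internally disjoint from $S_i$.

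The case $\ell = j - 1$ (with $t \geq 1$) is essentially identical: orient so that $F_0 = F - y_1$, with $u$ the neighbor of $y_1$ in $F$ and $v = y_2$, then apply the second ($R$-) conclusion of Lemma~\ref{lem:robust-structure} with parameter $t - 1$ to $F$ (still replacing $y_1$) and avoidance set $S_i \cup \{v\}$, producing a $u, y_1''$-path $R$ of length $2(t-1)+1 = 2t - 1$ whose vertices miss $S_i \cup \{v, y_1\}$, together with the existence of a copy of $P$ in $\F$ with leaf vector $\langle y_1'', v\rangle$. The same robustness-based reselection step yields a fresh $F''' \in \F$ with that leaf vector whose internal vertices avoid $V(R) \cup S_i$, and $P_{i+1} := R \cup F'''$ is a simple $u,v$-path of length $(2t-1) + j = k$ internally disjoint from $S_i$. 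Iterating until $i = h$ completes the construction.

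The subtle point — and the only real obstacle to carrying this out — is that the copy of $P$ directly returned by Lemma~\ref{lem:robust-structure} \emph{always} shares one extra vertex with the extension path $Q$ (namely the neighbor of the new leaf in that copy, which is forced by the construction of the lemma to coincide with the penultimate vertex of $Q$), so the naive concatenation would revisit an edge and fail to be a simple path. This is precisely why I discard the returned copy and use $(P,\eta)$-linkedness to pick a genuinely independent core copy of $P$; the slack in $\eta = 2kh$ is exactly what is needed to guarantee that such an independent reselection always succeeds.
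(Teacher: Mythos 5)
Your proof is correct and follows essentially the same route as the paper's: both reduce to an application of Lemma~\ref{lem:robust-structure} (the $Q$-conclusion when $k\equiv j$, the $R$-conclusion when $k\not\equiv j$), followed by a second appeal to $\eta$-robustness to reselect a ``fresh'' core copy of $P$ with leaf vector $\langle u^*,v\rangle$ whose interior misses the extension path and all previously used vertices. The only cosmetic difference is that you build the $h$ paths iteratively with a growing forbidden set $S_i$ ($|S_i|<hk$), while the paper packages the same counting into the statement ``for any $W$ with $|W|\leq kh$ avoiding $u,v$ there is a length-$k$ $u,v$-path missing $W$''; your explicit remark about why the $F'$ returned directly by Lemma~\ref{lem:robust-structure} cannot be concatenated with $Q$ (it shares the edge to the penultimate vertex) is a nice clarification of what the reselection step is silently fixing in the paper's version.
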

\begin{proof}
First suppose $k\equiv j\pmod{2}$. Let $F\in \F$. Let $\langle x,y\rangle$ be the leaf vector of $F$. 
By our assumption, $k-j=2t$ for some nonnegative integer $t$. To prove that $x,y$ are $(k,h)$-linked in $G$, it suffices to show that for any set $W$ of at most $kh$ vertices not containing $x,y$, there exists an $x,y$-path of length $k$ in $G$ that avoids $W$.
Now, let such a $W$ be given.
Since $\F$ is $\eta$-robust and $\eta= 2kh\geq |W|+2t+2$, by Lemma \ref{lem:robust-structure}, there exists a member $F'\in \F$ with leaf vector $\langle x,y'\rangle$, where $y'\neq y$ and a $y,y'$-path $Q$ of length $2t$ in $G$ such that $V(Q)$ is disjoint from
$W$. Since $\F$ is $\eta$-robust, there are at least $\eta$ internally disjoint members with leaf vector $\langle x,y'\rangle$. Since $\eta\geq 2kh\geq |W|+|V(Q)|$, among them we can find a member $F''$ such that $V(F'')\setminus \{y'\}$ is disjoint from $W\cup V(Q)$. Now $F''\cup Q$ is a $x,y$-path of length $k$ in $G$ that avoids $W$. 

Next, suppose $k\not\equiv j \pmod{2}$ and that $F_0$ is a subpath of $F$ of length $j-1$. Without loss of generality, suppose $F_0=F-y$. Let $z$ be the unique neighbor of $y$ in $F$. We need to argue that $x,z$ are $(k,h)$-linked in $G$. As before, it suffices to argue that for any set $W$ of at most $kh$ vertices not containing $x,z$ there exists an $x,z$-path of length $k$ in $G$ that avoids $W$.
Since $\F$ is $\eta$-robust and $\eta\geq 2kh\geq |W|+2t+2$, by Lemma \ref{lem:robust-structure}, there exists a
member $F''\in \F$ with leaf vector $\langle x,y''\rangle$ and a $z,y''$-path $R$ of length $2t+1$ in $G$ such that $V(R)$ is disjoint from $W$.  Since $\F$ is $\eta$-robust, there are at least $\eta$ internally disjoint members with leaf vector $\langle x,y''\rangle$. Since $\eta\geq 2kh\geq |W|+|V(R)|$ we can find a member $F'''$ such that $V(F''')\setminus \{y''\}$ is disjoint from $W\cup V(R)$. Now $F'''\cup R$ is a $x,z$-path of length $k$ in $G$ that avoids $W$.
\end{proof}
The following lemma was given in \cite{Janzer-bipartite}. For completeness we include a proof using our framework.

\begin{lemma}\label{lem:spider-linkage}
Let $k,s,t\geq 2$ be integers.
Let $T$ be a vertex labeled spider with length vector $\langle \ell_1,\dots, \ell_s\rangle$,
where for each $i\in [s]$, $1\leq \ell_i\leq k$ and $\ell_i=1$ for at most one $i$.
Let $S_{s,k}$ denote the spider with $s$ legs of length $k$.
Let $G$ be a graph.
Let $\eta=2kst$. Let $\F$ be an $\eta$-robust family of copies of $T$ in $G$. Let $F\in \F$.
Let $\langle y_1,\dots, y_s\rangle$ be the leaf vector of $F$. For each $i\in [s]$, let $x_i=y_i$
if $\ell_i\equiv k \pmod{2}$ and let $x_i$ the unique neighbor of $y_i$ in $F$ if $\ell_i\not \equiv k\pmod{2}$.  Then $\langle x_1,\dots, x_s\rangle$ is $(S_{s,k},t)$-linked in $G$. In particular, $G$ contains a copy of $K_{s,t}^{(k)}$.
\end{lemma}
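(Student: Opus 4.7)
The plan is to construct $t$ copies of $S_{s,k}$ in $G$, each with leaf vector $\langle x_1, \ldots, x_s \rangle$, pairwise internally vertex-disjoint; these $t$ copies together give the $(S_{s,k}, t)$-linkedness, and their union is a copy of $K_{s,t}^{(k)}$, with the $t$ distinct centers and the $s$ shared leaves forming the two parts of $K_{s,t}$.

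I would build the copies one at a time. When constructing the $(q+1)$-st copy, let $W$ denote the internal vertices of the $q$ copies built so far, so $|W| \leq qsk$. Starting from $F^{(0)} := F$, I would process legs $i = 1, \ldots, s$ in order. At step $i$, apply Lemma \ref{lem:robust-structure} to $F^{(i-1)}$ with $j = i$: use the first statement with parameter $\tau_i = (k - \ell_i)/2$ if $\ell_i \equiv k \pmod{2}$, and the second statement with $\tau_i = (k - \ell_i - 1)/2$ if $\ell_i \not\equiv k \pmod{2}$. In either case the parameter is a nonnegative integer, since $\ell_i \leq k$ and at most one $\ell_i$ equals $1$. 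The avoid set is $S = W \cup V(Q_1) \cup \cdots \cup V(Q_{i-1}) \cup \{y_1, \ldots, y_s, x_1, \ldots, x_s\}$, and the condition $|S| + 2\tau_i + 2 \leq \eta = 2kst$ is easily checked. The output is a new member $F^{(i)} \in \F$ (agreeing with $F^{(i-1)}$ on the leaves at positions other than $i$, where the new leaf is some $z_i$) and an extension path $Q_i$ from $y_i$ (Case~A) or $x_i$ (Case~B) to $z_i$ of length $k - \ell_i$, with $V(Q_i) \cap S = \emptyset$.

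After $s$ such steps, the leaf vector $\langle z_1, \ldots, z_s \rangle$ of $F^{(s)}$ is $(T,\eta)$-linked by $\eta$-robustness, yielding $\eta$ internally disjoint members of $\F$ sharing this leaf vector. I would pick among them a member $F^*$ whose internal vertices avoid $W \cup \bigcup_{i} V(Q_i)$, which is possible because $\eta = 2kst$ exceeds $|W| + \sum_{i} |V(Q_i)| + |V(T)|$. For each $i$, the concatenation of $F^*$'s $i$-th leg (from $c_{F^*}$ to $z_i$, length $\ell_i$) with $Q_i$ (from $z_i$ back to $x_i$, length $k - \ell_i$) is a path of length $k$: it is a simple path because $F^*$'s $u_i$-image, i.e.\ the second-to-last vertex on $F^*$'s $i$-th leg, lies in the internal vertex set of $F^*$ and is therefore disjoint from $V(Q_i)$. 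The union of these $s$ paths is an $S_{s,k}$ with center $c_{F^*}$ and leaf vector $\langle x_1, \ldots, x_s \rangle$, disjoint from the previous $q$ copies outside the shared leaves, completing the construction.

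The main technical obstacle is the disjointness bookkeeping: every $Q_i$ must avoid all prior $Q_{i'}$'s and the previously built copies, while the final member $F^*$ must avoid every $Q_i$ simultaneously. The crucial idea, which parallels the two-step trick in the proof of \Cref{lem:path-linkage}, is to defer the choice of $F^*$ until after all the extension paths have been fixed, so that the $(T,\eta)$-linkedness can be invoked just once with the complete avoid set already in hand. The value $\eta = 2kst$ is calibrated to support the cumulative avoid sets across all $t$ copies and $s$ leg-extensions.
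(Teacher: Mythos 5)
Your proposal follows essentially the same route as the paper's proof of \Cref{lem:spider-linkage}: apply \Cref{lem:robust-structure} once per leg to build the $s$ extension paths $Q_i$, and defer invoking $(T,\eta)$-linkedness until the end to pick a single member of $\F$ disjoint from the accumulated avoid set. The only cosmetic difference is that the paper packages the argument through the standard reduction to ``avoid any set $W$ of at most $kst$ vertices,'' while you unwind this by explicitly constructing the $t$ internally disjoint copies of $S_{s,k}$ one at a time; the bookkeeping and the use of \Cref{lem:robust-structure} and $(T,\eta)$-linkedness are the same.
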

\begin{proof}
To prove that $\langle x_1,\dots, x_s\rangle$ is $(S_{s,k},t)$-linked in $G$, as in the proof of Lemma \ref{lem:path-linkage}, it suffices to prove that given any set $W$ of at most $kst$ vertices in $G$ not containing $x_1,\dots, x_s$, there exists a copy of $S_{s,k}$ with leaf vector $\langle x_1,\dots, x_s\rangle$ that is disjoint from $W$.
By our assumption, if $k-\ell_i$ is even, then $x_i=y_i$ and if $k-\ell_i$ is odd, then 
$x_i$ is the unique neighbor of $y_i$ in $F$. Since $\ell_i=1$ for at most one $i$,
$x_1,\dots, x_s$ are distinct vertices. By applying Lemma \ref{lem:robust-structure} $s$ times, we can find vertices $y'_1,\dots, y'_s\notin W$ and vertex disjoint paths $Q_1,\dots, Q_s$, where for each $i\in [s]$, $Q_i$ is a $x_i,y'_i$-path of length $k-\ell_i$ avoiding $W$ and  $\langle y'_1,\dots, y'_s\rangle$ is the leaf vector of some member of $\F$. We are able to do so, since $\eta\geq |W|+ks+2$.
Since $\F$ is $\eta$-robust, there exist at least $\eta$ internally disjoint copies of $T$ with 
leaf vector $\langle y'_1,\dots, y'_s\rangle$. Since
$\eta\geq |W|+ks$, among them we can find a member $F'\in \F$ such that $F'\cup (\bigcup_{i=1}^s Q_i)$ forms a copy of $S_{s,k}$ with leaf vector $\langle x_1,\dots, x_s\rangle$ that avoids $W$. Hence, $\langle x_1,\dots, x_s\rangle$ is $(S_{s,k},t)$-linked in $G$.

Finally, since $\langle x_1,\dots, x_s\rangle$ is $(S_{s,k},t)$-linked, there exist $t$ internally disjoint copies of $S_{s,k}$ with leaf vector $\langle x_1,\dots, x_s\rangle$. Their union forms a copy of $K_{s,t}^{(k)}$ in $G$.
\end{proof}

We recall the well-known asymmetric version of K\H{o}v\'ari-S\'os-Tur\'an theorem
(see \cite{JP} Lemma 1 for instance). 
\begin{lemma} \label{lem:KST}
Let $s,t\geq 2$ be integers. There exists a constant $c_{s,t}$ such that
for any $K_{s,t}$-free bipartite graph $G$ with part sizes $m,n$, $e(G)\leq c_{s,t}(mn^{1-1/s}+m+n)$.
\end{lemma}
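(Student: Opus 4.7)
The plan is to apply the standard double counting proof of the K\H{o}v\'ari--S\'os--Tur\'an theorem in its asymmetric form. I would first assume without loss of generality that $s \le t$ and let $G$ be a bipartite graph with parts $M, N$ of sizes $m, n$. The $K_{s,t}$-freeness of $G$ implies that every $s$-subset of $M$ has at most $t-1$ common neighbors in $N$, since otherwise those $s$ vertices together with $t$ of their common neighbors would span a $K_{s,t}$.

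Next I would count in two ways the pairs $(v, S)$ with $v \in N$, $S \in \binom{M}{s}$, and $S \subseteq N_G(v)$. Summing over $v$ gives $\sum_{v \in N}\binom{d_G(v)}{s}$, while summing over $S$ gives at most $(t-1)\binom{m}{s} \le (t-1) m^s / s!$. By the convexity of $x \mapsto \binom{x}{s}$ (Jensen's inequality), the sum on the left is at least $n \binom{\bar d}{s}$, where $\bar d = e(G)/n$ is the average degree of the $N$-side. Combining yields $n \binom{\bar d}{s} \le (t-1) m^s / s!$.

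Finally I would split on the size of $\bar d$. If $\bar d < 2s$, then $e(G) < 2 s n$. Otherwise, the elementary estimate $\binom{\bar d}{s} \ge (\bar d / 2)^s / s!$ (valid for $\bar d \ge 2s$) gives $(\bar d)^s \le 2^s (t-1) m^s / n$, which after taking $s$-th roots and multiplying by $n$ yields $e(G) \le 2 (t-1)^{1/s} m n^{1 - 1/s}$. Combining the two regimes produces $e(G) \le c_{s,t}(m n^{1-1/s} + n) \le c_{s,t}(m n^{1-1/s} + m + n)$, as required. This is a classical argument and there is no real obstacle; the only minor subtlety is the case split used to apply Jensen cleanly, which is resolved by the elementary bound on $\binom{\bar d}{s}$ above. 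The $+m$ term in the statement is harmless and can be absorbed into the constant, or alternatively obtained symmetrically by running the same count from the $M$-side.
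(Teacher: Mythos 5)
Your proof is correct; it is the standard double-counting argument for the asymmetric K\H{o}v\'ari--S\'os--Tur\'an bound, counting pairs $(v,S)$ with $S$ an $s$-subset of the $M$-side contained in $N_G(v)$, applying Jensen, and handling the small-degree regime with a case split. The paper does not prove Lemma~\ref{lem:KST} itself---it invokes it as a known result, citing Lemma~1 of the Janzer--Pohoata paper \cite{JP}---so there is no proof in the paper to compare against; your argument is exactly what one would supply if a proof were demanded, and the WLOG reduction to $s\le t$ together with absorbing the $+m$ term into the constant (or running the count symmetrically from the $M$-side) are both handled correctly.
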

The following lemma uses averaging arguments that originated in \cite{CJL}, except that here we pre-process the admissible paths of length $j$ to $(k,h)$-linked paths (of length $j$ or $j-1$) (Lemma \ref{lem:path-linkage}) and then apply the averaging to $(k,h)$-linked paths, whereas Conlon, Janzer, Lee \cite{CJL} applied the averaging to admissible paths of length $j$ and then applied additional arguments to build longer paths to link pairs.

\begin{lemma} \label{lem:bounds-on-paths}
Let $k,r,s,t$ be integers, where $k,s,t,\geq 2, r\geq 1$. Let $\eta\geq krst$.
Let $0<\ve<1, \mu\geq 1$ be positive reals. There exists a positive constant $D_0 = D_0(k,s,t,\ve, \mu, \eta)$ such that the following holds.
Let $G$ be an $\mu$-almost-biregular $K_{s,t}^{(k)}(r)$-free bipartite graph with an ordered partition $(M,N)$ and minimum degree at least $D_0$. 
Let $2\leq j\leq k$ be an integer. Let $P$ be a vertex-labeled path of length $j$.
 Then 
 \begin{enumerate}
 \item 
If $j$ is odd, then the number of $\eta$-heavy $\eta$-admissible copies of $P$ in $G$ is at most $4\mu \ve e(G)d_M^{\floor{\frac{j-1}{2}}} d_N^{\floor{\frac{j-1}{2}}}$. 
\item If $j$ is even then the
number of $\eta$-heavy $\eta$-admissible copies of $P$ in $G$ that start and end in $M$ is at most $2\mu \ve e(G) d_M^{\frac{j}{2}-1}d_N^{\frac{j}{2}}$ and the number of $\eta$-heavy $\eta$-admissible copies of $P$ in $G$ that start and end in $N$ is at most $2\mu \ve e(G) d_M^{\frac{j}{2}}d_N^{\frac{j}{2}-1}$, unless $j=2$ and $k$ is odd.
\end{enumerate}
\end{lemma}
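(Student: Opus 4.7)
The plan is to argue by contradiction. Fix one of the enumerated cases (for definiteness take $j$ odd; the two $j$-even subcases are analogous), and suppose the family $\F$ of $\eta$-heavy $\eta$-admissible copies of $P$ in $G$ matching that case exceeds the claimed upper bound. I will produce a copy of $K_{s,t}^{(k)}(r)$ in $G$, contradicting the $K_{s,t}^{(k)}(r)$-freeness hypothesis.

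First, I apply Lemma~\ref{lem:robust-subfamily} to $\F$ with the natural ordered partition of $P$ placing its $M$-side vertices in the $M$-part. The values of $a$ and $b$ that one reads off from this partition are exactly such that $d_M^{b-1}d_N^{a-1}$ matches the product of degree-powers in the target upper bound, so choosing the $\ve$-parameter of Lemma~\ref{lem:robust-subfamily} to be half of the constant hidden in the contradiction hypothesis and taking $D_0 \geq C_1(k,s,t,\ve,\mu,\eta)$ yields an $\eta$-robust subfamily $\F' \subseteq \F$ with $|\F'| \geq |\F|/2$. For each $F \in \F'$ I invoke Lemma~\ref{lem:path-linkage} with $h := \eta/(2k)$: this produces a pair of vertices in $G$ that is $(k,h)$-linked, namely the endpoints of $F$ itself when $k \equiv j \pmod{2}$ and the endpoints of a sub-path of $F$ of length $j-1$ when $k \not\equiv j \pmod{2}$. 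The exception $j=2$, $k$ odd stated in the lemma appears precisely here, since the fall-back sub-path would have length $1$ and Lemma~\ref{lem:path-linkage} then yields nothing usable.

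I next form the auxiliary graph $H$ whose edges are the distinct pairs produced above. When $j$ is odd, $H$ is naturally bipartite between $M$ and $N$; when $j$ is even, $H$ is a simple graph on $M$ or on $N$ depending on the subcase. To convert $|\F'|$ into a lower bound on $|E(H)|$, I bound the number of length-$j$ paths in $G$ between any fixed pair of vertices by a routine walk count in the $\mu$-almost-biregular graph $G$: in the odd case the bound is $\mu^{j-1}d_M^{\lfloor(j-1)/2\rfloor}d_N^{\lfloor(j-1)/2\rfloor}$, with the analogous exponents appearing in each $j$-even subcase. Dividing $|\F'|$ by this multiplicity gives a lower bound on $|E(H)|$ proportional to $\ve\,e(G)$ (up to factors depending only on $\mu$ and $j$). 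For the chosen $\ve$, this exceeds the K\H{o}v\'ari-S\'os-Tur\'an upper bound of Lemma~\ref{lem:KST} applied to $H$ (or to its bipartite double cover in the $j$-even subcases), so $H$ contains a copy of $K_{s,t}$, providing $st$ pairs of vertices of $G$ that are each $(k,h)$-linked.

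The final step is the standard greedy construction. I process these $st$ pairs in any order and for each one pick $r$ internally vertex-disjoint paths of length $k$ avoiding the at most $str(k-1)$ interior vertices used by previous pairs; since each of the $h$ length-$k$ paths between a fixed $(k,h)$-linked pair is internally disjoint from the others, each previously used interior vertex spoils at most one of them, and taking $\eta$ (hence $D_0$) sufficiently large so that $h$ dominates $strk$ makes this succeed. The result is a copy of $K_{s,t}^{(k)}(r)$ in $G$, the desired contradiction. I expect the main obstacle to be executing the multiplicity bound cleanly and uniformly across all parity subcases --- in particular the two $j$-even subcases, where the endpoint pairs lie in a single part and the powers of $d_M$ versus $d_N$ in the bound are not symmetric --- and tracking the accumulated $\mu$-factors so that the leading constants in the claimed bound come out as stated.
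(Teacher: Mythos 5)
Your high-level plan (contradiction, robust subfamily via Lemma~\ref{lem:robust-subfamily}, linkage via Lemma~\ref{lem:path-linkage}, auxiliary graph, KST, greedy assembly) mirrors the paper's, but there is a genuine gap in the K\H{o}v\'ari--S\'os--Tur\'an step that the proposal as written cannot close. You form a single auxiliary graph $H$ whose vertex set sits inside $M\cup N$ and whose edges are the distinct $(k,h)$-linked pairs. The parts of $H$ then have sizes up to $|M|$ and $|N|$, so Lemma~\ref{lem:KST} only forces a copy of $K_{s,t}$ once $e(H)$ exceeds roughly $c_{s,t}|M||N|^{1-1/s}$. But your walk-count multiplicity bound gives $e(H)\gtrsim \ve\, e(G)/\mu^{j-1}$ at best, which is of order $e(G)$, and the lemma carries no hypothesis that $e(G)\gg |M||N|^{1-1/s}$ (indeed the ultimate application has $e(G)$ as small as $\Theta(|M|^{1/2+1/2k}|N|^{1/2})$). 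So the comparison in your KST step simply fails, and no choice of $\ve$ or $D_0$ repairs it.

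The paper circumvents this by conditioning on a fixed ``second vertex'' $v$ of the paths: it partitions the family into $\F_v$ according to $v$, and builds a separate auxiliary graph $H_v$ with parts $X_v\subseteq N_G(v)$ (the first vertices, so $|X_v|\leq \mu d_N$) and $Y_v$ (the last vertices, of size at most a product of degrees). The asymmetric KST bound on $H_v$ is then $c_{s,t}\bigl(|X_v||Y_v|^{1-1/s}+|X_v|+|Y_v|\bigr)$ where both part sizes are controlled by degree powers rather than by $|M|,|N|$, and this is overcome by taking $D_0$ large. Moreover, the paper's multiplicity bound inside a fixed $\F_v$ is the constant $\eta^{(2q-1)^2}$ (a consequence of $\eta$-admissibility once the second vertex is pinned), not a walk count in $\mu d$, so $e(H_v)$ is within a constant of $|\F_v|$. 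Both ingredients --- the conditioning on $v$ and the constant admissibility-based multiplicity --- are essential to the numerology, and both are absent from your argument.

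Your description of where the $j=2$, $k$ odd exception arises (the reduction to a length-$(j-1)$ path degenerates) is correct in spirit, and your final greedy assembly step is the standard one. If you insert the missing ``fix the second vertex $v$'' device, swap the walk-count multiplicity for the $\eta$-admissibility constant bound, and then sum the resulting per-$v$ bounds over $v\in N$, your outline becomes the paper's Claim~1 and the rest of your reductions via Lemmas~\ref{lem:robust-subfamily} and~\ref{lem:path-linkage} slot in as intended.
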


\begin{proof} Let $C_1 = C_1(\ve, \mu, \eta, P_{k+1})$ as given in Lemma \ref{lem:robust-subfamily}
for the given $\ve, \mu, \eta$. We will choose $D_0$ to be at least $C_1$ and also sufficiently large to satisfy other conditions to be specified later.

{\bf Claim 1.}
The number of $(k,\eta)$-linked $2q$-paths in $G$ that start and end in $M$ is at most $\ve e(G) d_M^{q-1} d_N^q$. The number of $(k,\eta)$-linked $2q$-paths in $G$ that start and end in $N$ is at most $\ve e(G) d_M^q d_N^{q-1}$. The number of $(k,\eta)$-linked $(2q+1)$-paths in $G$ is at most $\ve e(G) d_M^q d_N^q$.

{\bf Proof of Claim 1.}
    The proofs of the three statements are essentially the same. For convenience, we will just prove the first statement, leaving the other two to the reader.
    Let $\F$ denote the family of $(k,\eta)$-linked $2q$-paths in $G$ that start and end in $M$. 
    For each $v\in N$, let $\F_v$ be the subfamily of $\F$ consisting of members whose second vertex is $v$. We claim that $|\F_v|\leq \ve d_M^{q-1}d_N^{q+1}$. Suppose for contradiction that $
    |\F_v|>\ve d_M^{q-1}d_N^{q+1}$.
    Let $X_v$ denote the set of first vertices on the members of $\F_v$ and $Y_v$ the set of last vertices on the members of $\F_v$.  
     Since $G$ is $\mu$-almost-biregular, we have $|X_v|\leq \mu d_N$ and $|Y_v|\leq \mu^{2q-1} d_M^{q-1}d_N^q$.
      Let $H_v$ be an auxiliary bipartite graph with parts $X_v, Y_v$ such that for each $x\in X_v, y\in Y_v$, we have $xy\in E(H_v)$ if and only if there is a member of $\F_v$ with first vertex $x$ and last vertex $y$. Since members of $\F_v$ are $\eta$-admissible and each has $v$ as the second vertex, for fixed $x,y$, there are at most $\eta^{(2q-1)^2}$ members of $\F_v$ with first vertex $x$ and last vertex $y$. Hence, 
    \[e(H_v)\geq |\F_v|/\eta^{(2q-1)^2}\geq (\ve/\eta^{(2q-1)^2})d_M^{q-1} d_N^{q+1}.\]
    Since $|X_v|\leq \mu d_N$, $|Y_v|\leq \mu^{2q-1} d_M^{q-1}d_N^q$ and $d_M,d_N\geq D_0$, by choosing $D_0$ to be sufficiently large, we have $e(H_v)> c_{s,t} (|X_v||Y_v|^{1-1/s}+|X_v|+|Y_v|)$, where $c_{s,t}$ is the constant given in Lemma \ref{lem:KST}. By Lemma \ref{lem:KST}, $H_v$ contains a copy $K$ of $K_{s,t}$. Without loss of generality, suppose say $K$ has parts $\{x_1,\dots, x_s\}\subseteq X$ and $\{y_1,\dots, y_t\}\subseteq Y$. For each $i\in [s],j\in [t]$, since $x_iy_j\in E(H_v)$, $x_i,y_j$ are $(k,\eta)$-linked in $G$. So there exist $\eta$ internally disjoint $x_i,y_j$-paths of length $k$ in $G$.
Since $\eta\geq krst$, this will allow us to greedily find a copy of $K_{s,t}^{(k)}(r)$ in $G$ with branching vertices $x_1,\dots, x_s, y_1,\dots, y_t$, contradicting $G$ being $K_{s,t}^{(k)}(r)$-free.  
    Hence, $|\F_v|\leq \ve d_M^{q-1}d_N^{q+1}$. Since this holds for each $v\in N$, we have
     $|\F|\leq \ve|N| d_M^{q-1}d_N^{q+1}\leq \ve e(G)d_M^{q-1}d_N^q$. \hfill $\Box$

Now, let $\langle v_1,v_2\rangle$ be the leaf vector of $P$.
Let $(A,B)$ be an ordered bipartition of $P$ so that $v_1\in A$.

{\bf Case 1.} $j$ is odd.

Let $j=2q+1$. Let $\F$ be the family of all $\eta$-heavy $\eta$-admissible copies of $P$ in $G$.
Suppose for contradiction that $|\F|>4\mu \ve e(G)d_M^q d_N^q$. 
Let $\F_1$ denote the subfamily of $\F$ consisting of members of $\F$ that map $v_1$ into $M$
and let $F_2=\F\setminus \F_1$. Suppose without loss of generality, $|\F_1|>2\mu \ve e(G) d_M^a d_N^q$.
Since $D_0\geq C_1$,
by Lemma \ref{lem:robust-subfamily}, $\F_1$ contains a $\eta$-robust subfamily $\F_1'$ with $|\F'_1|\geq (1/2)|\F_1|>\mu \ve e(G) d_M^qd_N^q$.
First, suppose $k$ is odd. By Lemma \ref{lem:path-linkage}, the endpoints of each member of $\F'_1$ are $(k,\eta)$-linked. By Claim 1, $|\F'_1|\leq \ve e(G)d_M^qd_N^q$, a contradiction. Hence, we may assume that $k$ is even. 
 Let $\D_1$ denote family of the $(P-v_2)$-projections of members of $\F'_1$. By Lemma \ref{lem:path-linkage},
each member of $\D_1$ is a $2q$-path that start and end in $M$ whose endpoints are $(k,\eta)$-linked. By Claim 1,
$|\D_1|\leq \ve e(G)d_M^{q-1}d_N^q$.
Since $G$ is $\mu$-almost-biregular, we have $|\F'_1|\leq |\D_1|(\mu d_M)\leq \ve \mu e(G)d_M^qd_N^q$, contradicting our earlier claim about
$|\F'_1|$.

{\bf Case 2.} $j$ is even.

Let $j=2q$. Let $\F$ denote the family of all $\eta$-heavy 
$\eta$-admissible copies of $P$ where $v_1,v_2$ are mapped into  $M$ and let $\G$ denote
the family of all $\eta$-heavy $\eta$-admissible copies of $P$ where $v_1,v_2$ are mapped into $N$. 
Suppose for contradiction that $|\F|> 2\mu \ve e(G) d_M^{q -1}d_N^{q}$.
By Lemma \ref{lem:robust-subfamily}, $\F$ contains a $\eta$-robust subfamily $\F'$ with $|\F'|\geq (1/2)|\F|$. If $k$ is even, then by Lemma \ref{lem:path-linkage}, each member of $\F'$ is a $2q$-path that starts and ends in $M$ whose two endpoints are $(k,\eta)$-linked. But this contradicts Claim 1. So, suppose $k$ is odd. As we exclude the $j=2$ and $k$ odd case,
$j\geq 4$.
Let $\D_1$ denote the family of the $(P-v_2)$-projections of members of $\F'$. By Lemma \ref{lem:path-linkage}, each member of $\D_1$ is a $(2q-1)$-path whose endpoints are $(k,\eta)$-linked.
By Claim 1, $|D_1|\leq \ve e(G) d_M^{q-1} d_N^{q-1}$ and hence 
$|\F'|\leq |\D_1|(\mu d_N) \leq \mu \ve e(G) d_M^{q-1}d_N^q$. 
Hence, $|\F|\leq 2|\F'|\leq 2\mu \ve e(G) d_M^{q-1}d_N^q$, contradicting our assumption.
By an almost identical argument, $|\G|\leq  2\mu\ve e(G) d_M^qd_N^{q-1}$. 
\end{proof}

Now, we are ready to prove the following key theorem, from which Theorem \ref{thm:main-kst} immediately follows.

\begin{theorem} \label{thm:upper-bound}
Let $\mu, k, r,s,t$ be  positive integers with $s, t, \mu \geq 2$. There exist positive constants $C$ and $C'$ depending on $\mu, k, r, s, t$ such that the following holds.
Let $G$ be a $\mu$-almost-biregular $K_{s,t}^{(2k)}(r)$-free bipartite graph with an ordered partition $(M,N)$ where $|M|\leq |N|$. We have $e(G)\leq C\left( |M|^{\frac{1}{2}+\frac{1}{2k}} |N|^{\frac{1}{2}}  +|M| + |N|\right)$.
Furthermore, if $G$ is $K_{s,t}^{(2k)}$-free, then $e(G)\leq C' \left( |M|^{\frac{1}{2}+\frac{1}{2k}-\frac{1}{2ks}} |N|^{\frac{1}{2}} + |M| + |N| \right)$.
\end{theorem}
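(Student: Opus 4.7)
The plan is a double-counting argument, applied differently to obtain the two bounds. Fix an integer $\eta$ large in terms of $k, r, s, t$ (large enough that Lemmas \ref{lem:bounds-on-paths}, \ref{lem:robust-subfamily}, and \ref{lem:spider-linkage} all apply with the parameter $k$ there set to $2k$ here; e.g.\ $\eta = 4krst$ suffices), fix $\ve > 0$ small in terms of $k, s, t, \mu$, and choose $C, C'$ large enough that exceeding the claimed bound forces $\min\{d_M, d_N\} \ge D_0$, the minimum-degree threshold of Lemma \ref{lem:bounds-on-paths}. The additive $|M| + |N|$ terms in each bound handle the regime $e(G) = O(m+n)$, in which the inequalities are trivial. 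In both cases I argue by contradiction, supposing $e(G)$ exceeds the stated upper bound.

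For the first bound I count the family $\mathcal{W}$ of $2k$-paths in $G$ with both endpoints in $M$. Using $\mu$-almost-biregularity, a routine walk computation gives $|\mathcal{W}| = \Theta(e(G)\, d_M^{k-1} d_N^k)$. Split $\mathcal{W}$ into $\eta$-heavy and $\eta$-light members. By Lemma \ref{lem:admissible-paths}, each $\eta$-heavy member contains an $\eta$-heavy $\eta$-admissible $j$-subpath for some $2 \le j \le 2k$; Lemma \ref{lem:bounds-on-paths} (its $k$ set to $2k$, using $K_{s,t}^{(2k)}(r)$-freeness) bounds the number of such $j$-paths, and each extends to a member of $\mathcal{W}$ in $O_\mu(d_M^a d_N^b)$ ways where $a+b = 2k-j$ is determined by position and orientation. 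Summing over $j$ and positions gives an $\eta$-heavy count of at most $O(\ve) |\mathcal{W}|$, hence at most $|\mathcal{W}|/2$ for $\ve$ small. Since each ordered endpoint pair in $M^2$ admits fewer than $\eta^{(2k)^2}$ $\eta$-light members, the $\eta$-light count is at most $m^2 \eta^{(2k)^2}$. Combining, $e(G)\, d_M^{k-1} d_N^k = O(m^2)$; substituting $d_M = e(G)/m$ and $d_N = e(G)/n$ yields $e(G) = O(m^{1/2 + 1/(2k)} n^{1/2})$, a contradiction.

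For the second bound I count spiders instead of paths to gain the extra factor $m^{-1/(2ks)}$. Let $\mathcal{S}$ be the family of copies of the spider $S_{s,2k}$ (with $s$ legs of length $2k$) in $G$ whose center and all $s$ leaves lie in $M$; this is consistent with the bipartition since $2k$ is even. A walk count gives $|\mathcal{S}| = \Theta(m (d_M d_N)^{sk})$. As before, discard the $O(\ve)$-fraction of $\mathcal{S}$ whose members contain an $\eta$-heavy $\eta$-admissible $j$-subpath. For a remaining member $F$ that is $\eta$-heavy in the cleaned family $\mathcal{S}'$, Lemma \ref{lem:admissible-spiders} (with its $k$ equal to $2k$) produces an $s$-legged subspider $D$ of $F$ such that $F[D]$ is $\eta$-heavy and $\eta$-admissible in $\mathcal{S}'[D]$, with the sum of any two leg lengths of $D$ exceeding $2k$ (so at most one leg of length $1$). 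Partitioning $\mathcal{S}'$ by the finitely many vertex-labeled types of $D$ and applying Lemma \ref{lem:robust-subfamily} to the most populous type extracts an $\eta$-robust subfamily; Lemma \ref{lem:spider-linkage} (with its $k$ equal to $2k$) then forces $K_{s,t}^{(2k)}$ in $G$, contradicting $K_{s,t}^{(2k)}$-freeness. Hence $\mathcal{S}'$ contains at most $O(\ve)|\mathcal{S}|$ $\eta$-heavy members, while its $\eta$-light members number at most $m^s \eta^{(2ks)^2}$ (each ordered $s$-tuple of leaves in $M$ admits fewer than $\eta^{(2ks)^2}$ light spiders). Combining, $m(d_M d_N)^{sk} = O(m^s)$, and substituting $d_M d_N = e(G)^2/(mn)$ and rearranging yields $e(G) = O(m^{1/2 + 1/(2k) - 1/(2ks)} n^{1/2})$.

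The principal obstacle is the bookkeeping at the extension step: in the path case, carefully tracking the parity cases of Lemma \ref{lem:bounds-on-paths} and all positions of the heavy $j$-subpath within a $2k$-path with both endpoints in $M$; and, in the spider case, organizing the argument by the vertex-labeled type of the extracted subspider $D$ so that Lemmas \ref{lem:robust-subfamily} and \ref{lem:spider-linkage} both apply uniformly to the resulting $\eta$-robust subfamily.
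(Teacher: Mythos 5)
Your proposal is correct and follows essentially the same strategy as the paper's proof: for the first bound, count $2k$-paths with both endpoints in $M$, discard those containing an $\eta$-heavy admissible subpath via Lemmas~\ref{lem:admissible-paths} and~\ref{lem:bounds-on-paths}, and bound the remaining $\eta$-light paths by $m^2\eta^{(2k)^2}$; for the second, count $s$-legged spiders with center and leaves in $M$, clean out those with heavy subpaths, extract a heavy subfamily via Lemma~\ref{lem:admissible-spiders} and derive the contradiction through Lemmas~\ref{lem:robust-subfamily} and~\ref{lem:spider-linkage}. The bookkeeping you flag (positions of the heavy $j$-subpath, the $D$-type case split, choice of $\eta$) is handled identically in the paper, so the proposal reproduces the argument.
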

\begin{proof}
Let $\ve=\frac{1}{4(8\mu)^{2ks}}$.
Let $\eta = 4 k^2 r s t $. 
Let $D_0$ be the constant returned by Lemma \ref{lem:bounds-on-paths}, with $2k$ in place of $k$.   Let $G$ be given as described.
We may assume that $d_M,d_N\geq \max\{4k, D_0\}$.  Let $P$ be a vertex-labeled path of length $2k$. 
Let $\F$ denote the family of copies of $P$ in $G$ that start and end in $M$. Since $d_M,d_N\geq 4k$, by a greedy argument, we have
\begin{equation}\label{eq:F-lower}
|\F|\geq |M|(d_M/2)^k(d_N/2)^k\geq \frac{1}{2^{2k}} |M|d_M^kd_N^k.
\end{equation}

Let $\F_1$ denote the subfamily of $\F$ consisting of members that contain an $\eta$-heavy (not necessarily proper) subpath. 
For each $F\in \F_1$, by Lemma \ref{lem:admissible-paths}, $F$ contains an $\eta$-heavy $\eta$-admissible $j$-path for some $2\leq j\leq 2k$.
For each $2\leq j\leq 2k$, by Lemma \ref{lem:bounds-on-paths} and the fact that $G$ is $\mu$-almost-biregular, the number of members of $\F_1$ that contain a $j$-subpath that is $\eta$-heavy $\eta$-admissible in $G$ is at most $4\mu^{2k-j+1}\ve|M|d_M^kd_N^k$.
Hence,
\begin{equation}\label{eq:F1-bound}
|\F_1|\leq\sum_{j=2}^{2k} 4\mu^{2k-j+1}\ve|M|d_M^kd_N^k\leq 8\mu^{2k - 1}\ve|M|d_M^kd_N^k.
\end{equation}
By definition, members of $\F\setminus \F_1$ are $\eta$-light in $G$. 
Since there are at most $|M|^2$ different leaf vectors, by \eqref{eq:F-lower} and \eqref{eq:F1-bound}, we have 
\[|M|^2\eta^{4k^2}\geq |\F\setminus \F_1|\geq (\frac{1}{2^{2k}}-8\mu^{2k - 1} \ve)|M|d_M^kd_N^k \geq \frac{|M|}{2^{2k+1}}d_M^{k}d_N^{k}
\geq  \frac{|M|}{2^{2k+1}\mu^{2k}}\frac{[e(G)]^{2k}}{|M|^k|N|^k}, \]
by our choice of $\ve$.  Solving for $e(G)$, we get 
$e(G)\leq 4 \mu \eta^{2k} |M|^{\frac{1}{2}+\frac{1}{2k}}|N|^{\frac{1}{2}}$.
Taking $C = \max\{4\mu \eta^{2k}, D_0\}$, this proves the first statement.

To prove the second statement, assume that $G$ is $K_{s,t}^{(2k)}$-free.  Let $C_1 = \max_{T \subseteq S_{s, 2k}} \{ C_1(\ve, \mu, \eta, T)\}$ returned by Theorem~\ref{lem:robust-subfamily}.
We may assume that $d_M,d_N\geq \max\{2ks, D_0, C_1\}$; otherwise the claim holds immediately. 
Let $T$ be a vertex-labeled spider with $s$ legs each having length $2k$. Let $(A,B)$ be an ordered bipartition of $T$ such that the center and the leaves of $T$ are in $A$.
Let $\T$ be the family of all copies of $T$ in $G$ that have $A$ embedded in $M$.  Since $d_M,d_N\geq 2ks$, we have
\[|\T|\geq |M|(d_M/2)^{ks}(d_N/2)^{ks})\geq \frac{1}{2^{2ks}} |M|d_M^{ks}d_N^{ks}.\]
Let $\T_1$ denote the subfamily of members of $\T$ that contain an $\eta$-heavy $j$-path in $G$ for some $2\leq j\leq 2k$. Since $G$ is $K_{s,t}^{(2k)}(r)$-free, as in the bound for $|\F_1|$, by Lemma \ref{lem:bounds-on-paths}
and the fact that $G$ is $\mu$-almost-biregular, 
\[|\T_1|\leq \sum_{j=2}^{2k} 4\mu^{2ks-j+1}\ve|M|d_M^{ks}d_N^{ks}\leq 8\mu^{2ks - 1}\ve|M|d_M^{ks}d_N^{ks}.\] 
Let $\T_2=\T\setminus \T_1$. Let $\T^*_2$ be the subfamily of $\T_2$ consisting of members
of $\T_2$ that are $\eta$-heavy in $\T_2$. By Lemma \ref{lem:admissible-spiders}, if $F\in\T^*_2$, then there exists an $s$-legged subspider $D$ of $T$ with at most one leg of length one such that $F[D]$ is $\eta$-heavy and $\eta$-admissible in $\T_2[D]$ . Let $\T_2^*[D]$ denote the subfamily of members of $\T_2[D]$ that are $\eta$-heavy and $\eta$-admissible in $\T_2[D]$. 
Let $a=|V(D)\cap A|, b=|V(D)\cap B|$.
If $|\T_2^*(D)|\geq 2\ve e(G) d_M^{b-1} d_N^{a-1}$, then by Lemma \ref{lem:robust-subfamily}, $\D$ contains a nonempty $\eta$-robust subfamily  and  by Lemma  \ref{lem:spider-linkage}, $G$ contains a copy of $K_{s,t}^{(2k)}$, contradicting $G$ being $K_{s,t}^{(k)}$-free. Hence, $|\T^*_2[D]|\leq 2\ve  e(G) d_M^{b-1} d_N^{a-1}$.
Since $G$ is $\mu$-almost-biregular, the number of members of $\T^*_2$  that contains a member of $\T^*_2[D]$ is thus at most
$\mu^{2ks-a-b} \ve |M| d_M^{ks}d_N^{ks}$. Clearly, there are fewer than $2^{2ks}$ different subspiders $D$ of $T$.
Hence, $|\T^*_2|\leq 2^{2ks}   \mu^{2ks-a-b} \ve |M| d_M^{ks}d_N^{ks}<(2\mu)^{2ks}\ve |M| d_M^{ks}d_N^{ks}$.

By our choice of $\ve$, the bounds on $|\T|, |\T_1|, |\T^*_2|$ above give 
\[|\T_2\setminus \T^*_2|=|(\T\setminus \T_1)\setminus \T^*_2)|\geq \frac{1}{2^{2ks+1}} |M| d_M^{ks}d_N^{ks}\geq 
\frac{1}{2^{2ks+1}\mu^{2ks}} |M|\frac{e(G)^{2ks}}{|M|^{ks}|N|^{ks}}.\]
.

But by defintion, members of $\T_2\setminus \T^*_2$ are $\eta$-light in $\T_2$. Since there are certainly at most $|M|^s$ leaf vectors of vertices in $M$, $|\T_2\setminus \T^*_2|\leq |M|^s \eta^{4k^2s^2}$. Combining the lower and upper bounds on $|\T_2\setminus \T^*_2|$, we get 
$e(G)\leq 4 \mu \eta^{2ks} |M|^{\frac{1}{2}+\frac{1}{2k}-\frac{1}{2ks}} |N|^{\frac{1}{2}}$, thus proving the statement for $C' = \max\{ 4 \mu \eta^{2ks}, D_0\}$.
\end{proof}

We are now ready to prove Theorem \ref{thm:main-kst}.

\begin{proof} [Proof of Theorem \ref{thm:main-kst}]
First, we prove that for all positive integers $k,m,n$ where $m\leq n$ and $k\geq 1$, $\ex(m,n, K_{s,t}^{(2k)}(r))=O(m^{\frac{1}{2}+\frac{1}{2k}} n^{\frac{1}{2}}+n\log m)$. Let $G$ be a bipartite graphs of part sizes $m$ and $n$ with $e(G) \geq \frac{C}{\lambda'} (m^{\frac{1}{2}+\frac{1}{2k}} n^{\frac{1}{2}}
+   n\log m)$, where $C$ is from Theorem~\ref{thm:upper-bound} and $\lambda'$ is from Corollary~\ref{cor:biregular} with $\alpha=\frac{1}{2}+\frac{1}{2k}, \beta=\frac{1}{2}$.
By Corollary \ref{cor:biregular}, $G$ contains a $16$-almost-biregular subgraph $G'$ with part sizes $m',n'$ such that $e(G') \geq C(m^{\frac{1}{2}+\frac{1}{2k}} n^{\frac{1}{2}}+   n' + m').$ Thus, by Theorem~\ref{thm:upper-bound}, $G'$ and thus $G$ has a copy of $K_{s,t}^{(2k)}(r)$, proving the first part of Theorem~\ref{thm:main-kst}. 

For the second part,we prove that for all positive integers $k,m,n$ where $m\leq n$ and $k\geq 1$, $\ex(m,n, K_{s,t}^{(2k)}(r))=O(m^{\frac{1}{2}+\frac{1}{2k} - \frac{1}{2ks}} n^{\frac{1}{2}}+n\log m)$. Let $G$ be a bipartite graphs with part sizes $m, n$ such that 
$e(G) \geq \frac{C'}{\lambda'}(m^{\frac{1}{2}+\frac{1}{2k} - \frac{1}{2k s}} n^{\frac{1}{2}}$ $+   n\log m)$, where $C'$ is from Theorem~\ref{thm:upper-bound} and $\lambda'$ is from Corollary~\ref{cor:biregular}, now with $\alpha=\frac{1}{2}+\frac{1}{2k}-\frac{1}{2ks}, \beta=\frac{1}{2}$. By Corollary \ref{cor:biregular}, $G$ contains a $16$-almost-biregular subgraph $G'$ with parts of size $m', n'$ such that $e(G) \geq C'[(m')^{\frac{1}{2}+\frac{1}{2k} - \frac{1}{2k s}} (n')^{\frac{1}{2}}+   m' + n']$. Thus, by Theorem~\ref{thm:upper-bound}, $G'$ contains a copy of $K_{s,t}^{(2k)}$, proving the second part of Theorem~\ref{thm:main-kst}. 
\end{proof}


\section{Proof of Theorem \ref{thm:main-kp}}
\label{sec:kp-regular}

Now, we address finding even subdivisions of $K_p$ in the bipartite setting.
Our overall strategy is based on that used by Janzer \cite{Janzer-longer} for the non-bipartite setting. However, there are some key differences noted as follows.
In the unbalanced bipartite setting,
 $2$-paths play a very delicate role. In the general setting, when an almost regular graph $G$ is $K_p^{(2k)}$-free, one can show that there are very few heavy $2$-paths. In the unbalanced bipartite setting, however, this can no longer be guaranteed. 
In an almost biregular bipartite graph $G$ with parts $M,N$ with $|M|\leq |N|$,
$2$-paths that start and end at $M$ (we will call these $(M,M)$-type) and $2$-paths that start and end at $N$ (we will call these $(N,N)$-type)  need to be handled differently. 
Our main novelty in the arguments in this section lies in our delicate handling of $2$-paths.
We will need two lemmas of contrasting nature. 
On the one hand, we prove that if $G$ is $K_{p,p}^{(k)}(r)$-free, where $k\geq 3$ is odd, 
then $G$ contains very few  $(N,N)$-type heavy $2$-paths.
On the other hand, we prove that if $G$ is $K_p^{(2k)}(r)$-free then it must contain a good number of $(M,M)$-type light $2$-paths. We then use this collection to build many $k$-paths in which every $(M,M)$-type $2$-path contained in them are light. From there we apply additional arguments to build $K_p^{(2k)}$.

\begin{lemma}\label{lem:2pathsodd}
Let $k, p, r$ be integers where $p \geq 2, r \geq 1$ and $k\geq 3$ is odd. Let $0 < \ve < 1$, $\mu \geq 1$ be positive reals. Let $\eta \geq 2^5 k^2p^2 r^2 \mu^3 \left( \frac{2}{\ve} \right)^{2p}$.
Then, there exists a positive constant $D_1$ depending on $k, p, \ve, \mu$ such that the following holds. 
Let $G$ be a $\mu$-almost-biregular $K_{p, p}^{(k)}(r)$-free bipartite graph with an ordered partition $(M,N)$ and minimum degree at least $D_1$.  
Let $P$ be a vertex-label path of length $2$.
Then the number of $\eta$-heavy copies of $P$ in $G$ that start and end in $N$ is at most $\ve |M|d_M^2$. 
\end{lemma}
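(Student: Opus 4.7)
The plan is to argue by contradiction: assume that more than $\ve|M|d_M^2$ copies of $P$ in $G$ with both endpoints in $N$ are $\eta$-heavy, and then exhibit a copy of $K_{p,p}^{(k)}(r)$ in $G$. Since $e(P)=2$, a $2$-path $y_1vy_2$ with $y_1,y_2\in N$ and $v\in M$ is $\eta$-heavy exactly when the ordered pair $(y_1,y_2)$ has at least $\eta^4$ common neighbours in $M$; call such $(y_1,y_2)$ a \emph{heavy pair}.

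First I would pass from heavy $2$-paths to heavy pairs. Since $G$ is $\mu$-almost-biregular, each heavy pair contributes at most $\mu d_N$ heavy $2$-paths (the codegree is bounded by $\min(d(y_1),d(y_2))\le\mu d_N$), so the auxiliary graph $H$ on $N$ whose edges are heavy pairs satisfies $e(H)\gtrsim \ve|M|d_M^2/(\mu d_N)$. Using $|M|d_M\le\mu|N|d_N$ and the fact that $D_1$ can be taken as large as desired in $k,p,r,\mu$, Lemma \ref{lem:KST} forces a copy of $K_{p,Q}$ in $H$, where $Q$ is any polynomial in $k,p,r,\mu$ we wish. This produces sets $\{y_1,\dots,y_p\}$ and $\{z_1,\dots,z_Q\}$ in $N$ such that every pair $(y_i,z_j)$ has at least $\eta^4$ common neighbours in $M$.

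Next I would extend these length-$2$ configurations to length $k$. Write $k=2l+1$ with $l\ge 1$. For each heavy pair $(y_i,z_j)$ and each of its $\eta^4$ common neighbours $v\in M$, the path $y_i v z_j$ can be prolonged by a walk of length $k-2$ starting at $z_j$; since $k-2$ is odd and $z_j\in N$, this walk ends in $M$ and gives a $y_i$-to-$M$ path of length $k$. Because $\eta^4$ is enormous compared with $k,p,r,\mu$ (by the quantitative hypothesis on $\eta$) and the minimum degree is at least $D_1$, a greedy argument produces many internally vertex-disjoint such extensions while avoiding any previously committed bounded set of vertices. A pigeonhole on the endpoints in $M$ then furnishes, for each $(y_i,z_j)$, a common target $b_{ij}\in M$ joined to $y_i$ by at least $r$ internally disjoint length-$k$ paths passing through $z_j$.

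Finally, a second pigeonhole over the large reservoir $\{z_1,\dots,z_Q\}$ lets us select $p$ indices $j_1,\dots,j_p$ and $p$ common endpoints $b_1,\dots,b_p\in M$ for which, for every $(i,s)\in[p]\times[p]$, the previous step yields $r$ internally disjoint length-$k$ paths between $y_i$ and $b_s$; doing this in sequence, with a cleanup that records and avoids the $O(p^2rk)$ vertices already committed, produces the required $K_{p,p}^{(k)}(r)$ with branching set $\{y_1,\dots,y_p\}\subseteq N$ and $\{b_1,\dots,b_p\}\subseteq M$, consistent with $k$ being odd. The main obstacle is coordinating the greedy disjointness across three layers—extending individual $2$-paths, collecting $r$ paths between a single branching pair, and gluing over the $p\times p$ grid—while also forcing the length-$k$ endpoints in $M$ to collide on a common set of $p$ vertices. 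This is precisely where the quantitative lower bound $\eta\ge 2^5 k^2 p^2 r^2\mu^3(2/\ve)^{2p}$ and the threshold $D_1$ enter: the former guarantees that the codegree reservoirs of size $\eta^4$ dwarf every bounded vertex budget appearing in the construction, and the latter gives enough fresh neighbours at each greedy step.
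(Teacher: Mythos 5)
Your proposal takes a genuinely different route from the paper, but there is a quantitative gap that I do not think can be repaired by the stated means. You build an auxiliary graph $H$ on the vertex set $N$ whose edges are heavy pairs and then invoke Lemma~\ref{lem:KST} (the K\H{o}v\'ari--S\'os--Tur\'an bound) to pull out a $K_{p,Q}$. But your own density estimate gives only $e(H)\gtrsim \ve |N| d_M/\mu$, i.e.\ linear in $|N|$ with a coefficient controlled by $d_M$. To force a $K_{p,Q}$ on $|N|$ vertices, KST requires $e(H)\gtrsim |N|^{2-1/p}$, which translates into $d_M\gtrsim |N|^{1-1/p}$. This is a polynomial condition on the density of $G$, not something you get from choosing the constant $D_1$ large as a function of $k,p,r,\mu,\ve$. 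The lemma must hold in regimes where $d_M$ is just a large constant while $|N|$ is huge, and there your global auxiliary graph is far too sparse. The paper sidesteps exactly this issue by averaging first over $v\in M$ to localize: the analogous auxiliary graph $H_v$ lives on $N_G(v)$, which has at most $\mu d_M$ vertices, so the same lower bound $|\F_v|\gtrsim \ve d_M^2$ now means $H_v$ has constant relative density and admits a subgraph of minimum degree $\ge \frac{\ve}{\mu}d_M$, which is what the downstream random-$p$-set argument needs. This localization is not cosmetic; it is load-bearing.

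A second, independent gap lies in the step where you extend each heavy $2$-path by an arbitrary length-$(k-2)$ walk and then ``pigeonhole on the endpoints in $M$'' to obtain a single target $b_{ij}$ receiving at least $r$ internally disjoint length-$k$ paths. The $\eta^4$ extensions of $y_iv z_j$ spread over as many as $\Theta(d_M^{\lfloor (k-2)/2\rfloor}d_N^{\lceil(k-2)/2\rceil})$ endpoints in $M$, which is unboundedly many, so pigeonholing does not produce $r$ paths into one fixed vertex from a reservoir of size polynomial in the fixed parameters. The subsequent ``second pigeonhole'' to align all the $b_{i,j_s}$ over $i\in[p]$ has the same problem compounded: making $b_{1,j}=\cdots=b_{p,j}$ for a single $j$ would in general require $Q$ of order $|M|^{p-1}$. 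The paper's construction avoids this entirely: the $M$-side branching vertices $w_1,\dots,w_p$ are \emph{chosen directly} as the middle vertices $w$ of $2$-paths $xwy\in\F'$ with many distinct second legs $y$, and the length-$k$ paths are assembled as a $(k-1)$-path from $x_j$ to $y$ (using $(k-1,h)$-linkage coming from the $\eta$-robust subfamily of Lemma~\ref{lem:robust-subfamily} and Lemma~\ref{lem:path-linkage}) concatenated with the single edge $yw_i$. No pigeonhole over $M$ is ever needed.

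In short: you need the localization to $N_G(v)$ and the robust-subfamily/linkage machinery; without them, the KST step and the pigeonhole steps both fail in the sparse regime that the lemma is designed to cover.
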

\begin{proof} Let $(A,B)$ be the ordered partition of $P$ with $|A|=1, |B|=2$.
Let $D_1=C_1(\ve, \mu, \eta, P)$ returned by Lemma~\ref{lem:robust-subfamily}. Let $G$ be as given.
Let $\F$ denote the family of all $\eta$-heavy copies of $P$ in $G$ with $A$ embedded into $M$ and $B$ into $N$. Suppose for contradiction that $|\F|> 2\ve |M|d_M^2=2\ve e(G) d_M$.  By Lemma \ref{lem:robust-subfamily}, $\F$ contains a $\eta$-robust subfamily $\F'$ with $|\F'|\geq (1/2)|\F|
    \geq \ve |M|d_M^2$. Let $h=kp^2r$. Since $k-1$ is even and $\eta\geq 2(k-1)h$, by Lemma \ref{lem:path-linkage},
    the two endpoints of each member of $\F'$ are $(k-1,h)$-linked in $G$.
    By averaging there exists a vertex $v\in M$ that the subfamily $\F_v$ of $\F'$ consisting of members that map the middle vertex of $P$ to $v$ satisfies 
    $|\F_v|\geq |\F'|/|M|\geq \ve d_M^2$. Fix such a $v$. Let $H_v$ denote an auxiliary bipartite graph with parts $X,Y\subseteq N_G(v)$ such that $xy\in E(H_v)$ if and only if $xvy\in \F_v$.
    We have $e(H_v)=|\F_v|\geq \ve d_M^2$. 
    Since $v(H)\leq d_G(v)\leq \mu d_M$, $H$ contains a subgraph $H'$ with minimum degree at least $e(H)/v(H)\geq\frac{\ve}{\mu} d_M$.
    Let $X'=X\cap V(H'), Y'=Y\cap V(H')$. We first argue that we can find many vertices in $G$ that have many neighbors in $Y'$. Let $x$ be any vertex in $X'$. For each $y\in N_{H'}(x)$,
     since $xvy\in \F'$ and $\F'$ is $\eta$-robust, there are at least $\eta$ members of $\F'$ that have leaf vector $\langle x,y\rangle$. So there are at least $\frac{\eta \ve}{\mu} d_M$ members of $\F'$ that have $x$ as the first vertex. Let $\C=\{(w,y): xwy\in \F'\}$.
     Then $|\C|\geq \frac{\eta\ve}{\mu} d_M$. For each $w$, let $N_\C(w)=\{y\in Y': (w,y)\in \C\}$.
     Let $m=\ceil{2pr(\frac{2}{\ve})^p}$.
 
     Let $W$ denote the set of vertices $w$ satisfying $|N_\C(w)|\geq m^2$.  Since $|N_G(x)|\leq \mu d_N\leq \mu^2 d_M$,  the number of pairs in $\C$ incident to $W$ is at least $\frac{\eta\ve}{\mu}d_M-\mu^2 d_M m^2\geq \frac{\eta\ve}{2\mu}d_M$, by our definitions of $m$ and $\eta$. But for each $w\in W$, $|N_\C(w)|\leq\mu d_M$. Hence, $|W|\geq \frac{\eta \ve}{2\mu^2}\geq m$. Let $W_0$ be a set of $m$ vertices in $W$. Suppose $W_0=\{w_1,\dots, w_m\}$. Since $|N_\C(w_i)|\geq m^2$ for each $i\in [m]$, we can greedily find disjoint subsets $S_1,\dots, S_m$ of size $m$ such that for each $i\in [m]$ $S_i\subseteq N_\C(w_i)$. Let $S=\bigcup_{i=1}^m S_i$.
Let $U$ be a random $p$-subset of $X'$. Let $S'$ be the subset of vertices in $S$ that are adjacent to all of $U$ in $H'$.
For each $y\in S$, since $d_{H'}(y)\geq \ve|X'|$, the probability that $y$ is adjacent to all of $U$ in $H'$ is at least $\binom{|\ve|X'|}{p}/\binom{|X'|}{p}\geq (\ve/2)^p$,
where we used $\ve|X'|\geq 2p$. Hence, $\mathbb{E}(S')\geq (\ve/2)^pm^2$.
Fix such a $p$-set $U$ for which $|S'|\geq (\ve/2)^p m^2$.
For each $i\in [m]$, we call $i$ {\it good} 
if $|S'\cap S_i|\geq (\ve/2)^p m/2$; otherwise call it {\it bad}.
Let $\I_1$ denote the set of good $i$, and $\I_2$ the set of bad $i$.
By definition, $|\bigcup_{i\in \I_2} S'\cap S_i|<m(\ve/2)^p m/2$. So 
$|\bigcup_{i\in\I_1} (S'\cap S_i)|\geq (\ve/2)^pm^2/2$. But $|S'\cap S_i|\leq m$ for each $i$. So $|\I_1|\geq (\ve/2)^pm/2\geq p$. Without loss of generality, suppose $\I_1=[p]$.
For each $i\in [p]$, since $|S'\cap S_i|\geq (\ve/2)^p m/2\geq pr$, we can find $p$ disjoint subsets
$S_{i,1},\dots, S_{i,p}$ of $S'_i$ of $S'\cap S_i$, each of size $r$. For each $x_i\in U$ and $y\in \bigcup_{i=1}^p\bigcup_{j=1}^p S_{i,j}$, since $x_iy\in E(H_v)$, $x_i,y$ are $(k-1, h)$-linked in $G$.
Since $h=2kp^2r$, we can find internally disjoint $(k-1)$-paths joining each $x_i\in U$ to each $y\in \bigcup_{i=1}^p\bigcup_{j=p} S_{i,j}$, so that they also avoid $\{w_1,\dots, w_p\}$.
So, the union of these paths together with edges between $w_i$ to $\bigcup_{j=1}^p S_{i,j}$ for $i=1,\dots, p$ contains a copy of $K_{p,p}^{(k)}(r)$, a contradiction.
    \end{proof}

The method used in the above proof is not adaptable to $2$-paths that start and end in $M$ in the $k$ odd case. The issue is that $d_M$ can potentially be much larger than $d_N$.
For $2$-paths that start and end in $M$, we use a different strategy and instead prove that there should be a good amount of light $2$-paths.

\begin{lemma}\label{goodlightpaths}
Let $k, p, r$ be integers where $k \geq 2, p \geq 3$, and $r \geq 1$. Let $\mu \geq 1$ be a positive real. Let $\eta \geq p^4k^2r^2$. Then there exist positive constants $\gamma = \gamma(k, \mu, p, r)$, and $D_2 = D_2(k,\mu, p, r)$ such that the following hold. Let $G$ be $K_p^{(2k)}(r)$-free bipartite graph with an ordered bipartition $(M,N)$ and minimum degree at least $D_2$.
There is a collection of at least $\gamma |M|d_M^{\ceil{\frac{k}{2}}}d_N^{\floor{\frac{k}{2}}}$ many $k$-paths starting at $M$ such that every $2$-path starting at $M$ contained in it is $\eta$-light. 
\end{lemma}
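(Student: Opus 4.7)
The plan is to count all $k$-paths starting at $M$ and subtract those containing an $\eta$-heavy $(M,M)$-type 2-subpath, so that the ``good'' ones form a constant fraction of the total.

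A standard greedy argument using $\delta_M \geq d_M \geq D_2$, $\delta_N \geq d_N \geq D_2$, and $D_2$ much larger than $k$, shows the total number of $k$-paths starting at $M$ is at least $c_0 |M| d_M^{\lceil k/2 \rceil} d_N^{\lfloor k/2 \rfloor}$ for some $c_0 = c_0(k) > 0$. A bad $k$-path contains some 2-subpath $v_{2i}v_{2i+1}v_{2i+2}$ that is $\eta$-heavy; for each of the $\lfloor k/2 \rfloor$ possible positions, the number of $k$-paths with a heavy 2-subpath there is at most $|\mathcal{T}_h^{(M)}| \cdot \mu^{k-2} d_M^{\lceil k/2 \rceil - 1} d_N^{\lfloor k/2 \rfloor - 1}$, where $\mathcal{T}_h^{(M)}$ is the family of $\eta$-heavy $(M,M)$-type 2-paths in $G$ and the second factor bounds the number of extensions of a 2-path to a $k$-path in the $\mu$-almost-biregular host. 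Hence it suffices to show $|\mathcal{T}_h^{(M)}| \leq c_1 |M| d_M d_N$ for a sufficiently small $c_1 = c_1(k, \mu, p, r)$.

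To prove this bound, suppose for contradiction that $|\mathcal{T}_h^{(M)}| \geq c_1 |M| d_M d_N$, and introduce the codegree graph $H^\star$ on $M$ with $uw \in E(H^\star)$ iff $|N_G(u) \cap N_G(w)| \geq \eta^4$. The identity $|\mathcal{T}_h^{(M)}| = \sum_{uw \in E(H^\star)} |N_G(u) \cap N_G(w)| \leq \mu d_M \cdot e(H^\star)$ gives $e(H^\star) \geq (c_1/\mu)|M| d_N$; iteratively deleting vertices of below-half-average degree produces a subgraph $H'$ of $H^\star$ with minimum degree at least $c_1 d_N / \mu \geq c_1 D_2 / \mu$. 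By choosing $D_2$ sufficiently large, this minimum degree can be made as large as we wish, so a Bollob\'as--Thomason-type topological-minor argument (refined to produce paths of length exactly $k$) yields a copy of $K_p^{(k)}(r)$ in $H'$. The crucial structural observation is that any walk $u_0 u_1 \cdots u_k$ in $H^\star$ lifts to a $2k$-path $u_0 w_1 u_1 w_2 \cdots w_k u_k$ in $G$ by selecting $w_i \in N_G(u_{i-1}) \cap N_G(u_i)$, with at least $\eta^4 \geq p^{16} k^8 r^8$ independent choices at each step. This slack dwarfs the $O(pkr)$ vertices used along a greedy construction, so a $K_p^{(k)}(r)$ in $H^\star$ lifts to a $K_p^{(2k)}(r)$ in $G$, contradicting the hypothesis.

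Combining everything, the number of bad $k$-paths is at most $O(c_1 \mu^{k-2} k) |M| d_M^{\lceil k/2 \rceil} d_N^{\lfloor k/2 \rfloor}$, and choosing $c_1$ small enough (e.g.\ $c_1 = c_0 / (2k \mu^{k-2})$) makes this at most half of the total. Therefore at least $\gamma |M| d_M^{\lceil k/2 \rceil} d_N^{\lfloor k/2 \rfloor}$ good $k$-paths exist for $\gamma = c_0/2 > 0$. The main obstacle is the codegree-to-paths lifting: one must ensure the $\binom{p}{2} r$ paths of length $2k$ in $G$ are pairwise internally disjoint, by carefully tracking previously used vertices and exploiting the slack $\eta^4 \geq p^{16} k^8 r^8$ to always find a fresh common neighbor. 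A secondary subtlety is extracting $K_p^{(k)}(r)$ from a graph of large minimum degree with paths of length exactly $k$ (rather than paths of variable lengths as in classical topological-minor results), which requires a direct greedy argument exploiting the high min degree rather than a direct appeal to Bollob\'as--Thomason.
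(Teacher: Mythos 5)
Your subtraction strategy has a fatal gap at the very step you treat as routine: the bound $|\mathcal{T}_h^{(M)}| \leq c_1\,|M|\,d_M\,d_N$ for a \emph{small} constant $c_1$ is simply not provable and is, in general, false. The total number of $(M,M)$-type $2$-paths is $\Theta(|M|\,d_M\,d_N)$, and nothing in the hypotheses prevents a constant fraction of these (say a $1-\tfrac{1}{q}$ fraction, $q=p^2kr$) from being $\eta$-heavy, even in a $\mu$-almost-biregular $K_p^{(2k)}(r)$-free graph. The Tur\'an argument one can run on the auxiliary codegree graphs shows that at least a $\tfrac{1}{q}$ fraction of the $2$-paths through each $v\in N$ are \emph{light}; it gives no upper bound on the heavy fraction beyond $1-\tfrac{1}{q}$. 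Once heavy $2$-paths are a constant fraction, a typical $k$-path contains $\lfloor k/2\rfloor$ $(M,M)$-type $2$-subpaths and will usually contain at least one heavy one, so the ``bad'' $k$-paths can be the overwhelming majority of all $k$-paths and the subtraction yields nothing. The paper's proof avoids this entirely by \emph{constructing} the good $k$-paths directly: it observes $K_p^{(2k)}(r)\subseteq K_q^{(2)}$, applies Tur\'an to the per-vertex codegree graph $H_v$ on $N_G(v)$ to extract $\Omega\bigl((1/q)\binom{d_G(v)}{2}\bigr)$ light $2$-paths through each $v$, assembles these into an auxiliary graph $\mathcal{M}$ on $M$, passes to a high-min-degree subgraph, and greedily grows $\lfloor k/2\rfloor$-paths in $\mathcal{M}$ which lift to $2\lfloor k/2\rfloor$-walks in $G$ with all relevant $2$-subpaths light (then corrects for walks versus paths). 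That route is the one you should take.

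Your fallback for bounding $|\mathcal{T}_h^{(M)}|$ is also broken on its own terms. A graph $H'$ on a subset of $M$ with minimum degree at least a fixed constant $c_1 D_2/\mu$ does not contain $K_p^{(k)}(r)$ in general: for odd $k$, a bipartite $H'$ of arbitrarily large constant degree contains no $K_p^{(k)}(r)$ at all (it would contain an odd cycle), and more generally constant min degree is far below any density threshold forcing a $k$-multi-subdivision of $K_p$. Relatedly, Bollob\'as--Thomason-type results produce topological minors whose branch paths have \emph{unconstrained} lengths, not exactly $k$, and the ``direct greedy argument'' you wave at in place of it is precisely where the content would have to live; it cannot exist with only a constant min-degree hypothesis. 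If you want to use a single global codegree graph $H^\star$ on $M$, what you can legitimately extract is that $H^\star$ is $K_q$-free (greedily choosing distinct middle vertices, using $\eta^4>\binom{q}{2}$), hence Tur\'an gives $e(H^\star)\leq(1-1/q)\binom{|M|}{2}$, but this upper bound is $\Theta(|M|^2)$ and translates to a bound on $|\mathcal{T}_h^{(M)}|$ that is far too weak. You really need the per-vertex version of the Tur\'an argument and the direct construction of light $k$-paths, as in the paper.
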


\begin{proof}
Note that  if a path is $h$-light, it is $\eta$-light for every $\eta \geq h$. Thus, it suffices to construct $k$-paths starting at $M$ such that every $2$-path starting at $M$ is $h$-light for $h = p^4k^2r^2$. 
First, observe that $K_p^{(2k)}(r)$ is a bipartite graph in which vertices in one part have degree two and the other part has at most $p^2kr$ vertices. So it is a subgraph of $K^{(2)}_q$, where $q=p^2kr$.
Since $G$ is $K_p^{(2k)}(r)$-free, it must be $K_q^{(2)}$-free.
For each $v \in N$, form the graph $H_v$ with vertex set $N_G(v)$ and include $xy$ in $E(H_v)$ if and only if $xvy$ is an $\eta$-heavy copy of $P$ in $G$. If $H_v$ contains a copy $F$ of $K_q$ say on vertices $x_1,\dots, x_q$
then since each pair $\{x_i,x_j\}$ is joined by at least $h\geq q^2$ paths of length $2$ in $G$, we can greedily build a copy of $K_q^{(2)}$ in $G$ from $F$, contradicting our earlier claim. So $H_v$ is $K_q$-free. By Tur\'an's theorem, there must exist at least $(1/q)\binom{d_G(v)}{2}$ ordered pairs
$(x,y)$ such that $x,y\in N_G(v)$ and that $xvy$ is an $h$-light copy of $P$ in $G$. Hence, the number of $h$-light copies of $P$ in $G$ is at least
\[\sum_{v\in N} (1/q)\binom{d(v)}{2}\geq (1/q) |N| \binom{d_N}{2}\geq \frac{1}{4q}Nd_N^2\geq \frac{1}{4q\mu} |M|d_Md_N.\]

Now, form an auxiliary graph $\mathcal{M}$ on $M$ by joining two vertices if there is an $h$-light $2$-path between them. By definition of lightness and the previous count, this graph has at least $\frac{1}{8h q \mu}|M|d_Md_N$ edges. By successively deleting vertices of degree smaller than $\frac{1}{32 h q \mu} d_M d_N$, we can find a subgraph $\mathcal{M}'$ with minimum degree $\frac{1}{32  h q \mu } d_M d_N$ and at least  $\frac{1}{16h q \mu }|M|d_Md_N$ edges. Then to find many $\floor{\frac{k}{2}}$ paths in $\mathcal{M}'$, pick any edge and then greedily grow a $\floor{\frac{k}{2}}$-path from that edge. Since $\frac{1}{32 h q \mu}d_Md_N\geq 2 k$, we can find at least $(\frac{1}{64 h q \mu } d_Md_N)^{\floor{\frac{k}{2}}}|M|$ many  $\floor{\frac{k}{2}}$ paths in $\mathcal{M}'$. Since $G$ is $\mu$-almost-biregular, the total number of $2\floor{\frac{k}{2}}$-walks which are not paths because of a repeated $N$-vertex is no more than $\frac{1}{4}k^2\mu^{k - 1}|M|d_M^{\floor{\frac{k}{2}} - 1}d_N^{\floor{\frac{k}{2}}}$, since there are no more than $N$ many $N$-vertices, no more than $\floor{\frac{k}{2}}^2$ pairs of places on the path for an $N$-vertex to go, and no more than $\mu^{k - 1}d_M^{\floor{\frac{k}{2}} }d_N^{\floor{\frac{k}{2}} - 1}$ many such paths containing the given $N$-vertex in these two specific spots.   Since $d_M, d_N\geq D_2$, provided that $D_2$ is sufficiently large in terms of $\mu, h, q$, and $k$, we have that at least half of the paths we found in $\mathcal{M}'$ will translate to $2 \floor{\frac{k}{2}}$-paths in $G$. Let $\gamma = \frac{1}{4} (\frac{1}{64 h q \mu})^{\floor{\frac{k}{2}}}$. If $k$ is even, this is a collection of at least $2 \gamma |M| d_M^{\frac{k}{2}}d_N^{\frac{k}{2}}$ many $k$-paths starting at $M$ which contain only $h$-light $2$-paths starting at $M$.  If $k$ is odd, then using the minimum degree condition of $G$, we can then greedily extend this collection to a collection of at least $\gamma |M| d_M^{\ceil{\frac{k}{2}}}d_N^{\floor{\frac{k}{2}}}$ many $k$-paths starting at $M$ which contain only $h$-light $2$-paths starting at $M$. 
\end{proof}

Next, we need a useful structural lemma.

\begin{lemma}\label{lem:linkingpathskp}
    Let $G$ be a graph. Let $k,j$ be positive integers, where $k+1\leq j\leq 2k$. Let $\eta > 8p^2k^2r$.
    Let $P$ be a vertex-labeled path of length $j$ with leaf vector $\langle v_1,v_2\rangle$. 
    Let $\F$ be a $\eta$-robust family of copies of $P$ in $G$. 
    Then
    \begin{enumerate}
        \item For any $Q\in \F$, the endpoints of any subpath of $Q$ of length $2j-2k$ are $(2k,2p^2kr)$-linked in $G$. 
        \item Suppose $v$ is a vertex in $G$ such that there is a collection of $p$ paths $Q_1, Q_2, \dots Q_p$ of length $k$, each having $v$ as an endpoint such that for every $i,\ell\in [p]$, $i\neq \ell$, $Q_i\cup Q_\ell$ is a $2k$-path and contains a member of $\F$. Then $G$ contains a copy of $K_p^{(2k)}(r)$.
        \end{enumerate}
\end{lemma}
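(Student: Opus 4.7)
The plan is to prove Part 1 first, which produces pairwise $(2k, 2p^2kr)$-linkages inside any $Q \in \F$, and then derive Part 2 by localizing Part 1 to each of the $p$ paths through $v$ and applying a standard greedy construction.

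For Part 1, fix $Q = u_0 u_1 \cdots u_j \in \F$ and a subpath $Q' = u_a \cdots u_b$ with $b - a = 2j - 2k$, and set $h = 2p^2 k r$. The decisive observation is the length identity
\[
a + j + (j - b) = j + (2k - j) = 2k,
\]
so that concatenating the two outside segments of $Q$ on either side of $Q'$ (of total length $2k - j$) with any $j$-path from $u_0$ to $u_j$ in $G$ produces a $(u_a, u_b)$-path of length exactly $2k$. By the standard greedy reduction, it suffices to show that for every $W \subseteq V(G)\setminus\{u_a, u_b\}$ with $|W| \leq 2kh$, such a $2k$-path exists avoiding $W$. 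The $\eta$-robustness of $\F$ combined with $\eta > 8p^2k^2r$ furnishes an alternate $F^* \in \F$ of leaf vector $\langle u_0, u_j \rangle$ whose interior avoids $W$ and the two outside segments. To produce $h$ internally disjoint copies rather than a single one, I would iterate, in each round also invoking Lemma \ref{lem:robust-structure} to replace $u_0$ and $u_j$ by fresh vertices through short even-length extensions so that the $h$ resulting paths have pairwise disjoint interiors while all still have length $2k$.

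For Part 2, define $p_i$ to be the vertex on $Q_i$ at distance $2k - j$ from $w_i$; since $k + 1 \leq j \leq 2k$, this places $p_i$ between $w_i$ and $v$ (with $p_i = w_i$ when $j = 2k$), so the $p_i$'s are distinct. For any pair $i \neq \ell$, the segment from $p_i$ through $v$ to $p_\ell$ inside $Q_i \cup Q_\ell$ has length $2(j - k) = 2j - 2k$. This segment lies entirely within $F_{i, \ell}$: since $F_{i, \ell} \subseteq Q_i \cup Q_\ell$ has length $j > k$ it must contain $v$, and if $s_i, s_\ell$ denote its edge counts on the $Q_i$ and $Q_\ell$ sides of $v$, then $s_i + s_\ell = j$ with $s_i, s_\ell \leq k$ forces $s_i, s_\ell \geq j - k$, covering both $p_i$ and $p_\ell$. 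Applying Part 1 to $F_{i, \ell}$ with this subpath yields that $(p_i, p_\ell)$ is $(2k, 2p^2 k r)$-linked in $G$.

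Finally, I would build $K_p^{(2k)}(r)$ with branching set $\{p_1, \ldots, p_p\}$ by the standard greedy construction: at every stage the set of previously used internal vertices has size at most $\binom{p}{2} r (2k - 1) < p^2 k r$, which is strictly less than half the $2p^2 k r$ internally disjoint witnesses for each pairwise linkage, so a new $2k$-path is always available. The main obstacle is the internal-disjointness argument in Part 1: a naive construction reuses $Q$'s outside segments and thereby shares interior vertices across the $h$ paths; handling this through iterated applications of Lemma \ref{lem:robust-structure} at both ends while keeping the total length at exactly $2k$ is the delicate step, after which Part 2 is largely bookkeeping.
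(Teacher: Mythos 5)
Your Part 2 matches the paper's argument: your vertex $p_i$ at distance $2k-j$ from the far endpoint of $Q_i$ coincides with the paper's $v_i$ at distance $j-k$ from $v$, and the verification that the $p_i$-through-$v$-to-$p_\ell$ segment of length $2j-2k$ lies inside $F_{i,\ell}$ (since $F_{i,\ell}$ has length $j>k$ and so straddles $v$) is exactly the paper's. The final greedy count is also the same.

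Your Part 1, however, has a genuine gap in the one place you yourself flag as delicate, and the fix you propose does not work. You suggest producing the $h$ internally disjoint $(u_a,u_b)$-paths of length $2k$ by iterating Lemma~\ref{lem:robust-structure} to ``replace $u_0$ and $u_j$ by fresh vertices through short even-length extensions.'' But Lemma~\ref{lem:robust-structure} with parameter $t\ge 1$ hands you a $y_j,y_j'$-path of length $2t$; grafting this onto the existing outside segment from $u_a$ to $u_0$ lengthens that segment from $a$ to $a+2t$, so the resulting $(u_a,u_b)$-path has length $2k+2t$, not $2k$. Taking $t=0$ keeps the length but gives $y_j'=y_j$, i.e.\ nothing fresh. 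So the extension device cannot simultaneously freshen the endpoints and preserve the length $2k$, and your $h$ paths would still share the interior vertices of $Q$'s two outside segments. (The ``avoid $W$'' reduction has the same problem: if $W$ meets the outside segments of $Q$, reusing them is not permitted, and a fresh member $F^*$ of $\F$ with leaf vector $\langle u_0,u_j\rangle$ does nothing to repair that.)

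The paper avoids this entirely by regrowing the outside segments rather than extending the endpoints. Using the \emph{second} clause of $\eta$-robustness (for $T_1\subseteq T_2$ with $|T_2|=|T_1|+1$, there are $\ge\eta$ distinct $T_2$-projections among members sharing a given $T_1$-projection), one starts from $Q[P_0]$ and grows it one vertex at a time, choosing at each step a vertex not yet used; since only $2k-j$ vertices are added per copy and $\eta>2kh$, one obtains $h$ members $Q_1,\dots,Q_h\in\F$ with $Q_i[P_0]=Q[P_0]$ that are pairwise vertex-disjoint outside $Q[P_0]$ --- in particular with pairwise disjoint, length-preserving outside segments. Then for each $i$ the \emph{first} robustness clause (the leaf vector of $Q_i$ is $(P,\eta)$-linked) gives a member $F_i$ with the same leaf vector $\langle x_i,y_i\rangle$ whose interior avoids $\bigcup_\ell V(Q_\ell)$ and the earlier $F_\ell$'s, and $\{(Q_i\cup F_i)\setminus Q[P_0]\}$ is the desired family of $h$ internally disjoint $(u,v)$-paths of length $(2k-j)+j=2k$. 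This is the missing mechanism in your proposal; Lemma~\ref{lem:robust-structure} is not the right tool here because it is built to move a leaf a fixed even or odd distance, not to reproduce an initial segment at its original length.
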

\begin{proof} Let $P_0$ denote any subpath of $P$ of length $2j-2k$. Let $h = 2p^2kr$. Let $u,v$ be the two endpoints of
$Q[P_0]$. Since $\F$ is $\eta$-robust and $\eta>2kh$, we can greedily find
members $Q_1,\dots, Q_h\in \F$ such that $Q_i[P_0]=Q[P_0]$ for each $i\in [h]$, and that they are pairwise vertex-disjoint outside $Q[P_0]$. (If $j=2k$, then $Q_i=Q$ for each $i\in [h]$.) For each $i\in [h]$, suppose
$Q_i$ has leaf vector $\langle x_i, y_i\rangle$. Since $\F$ is $\eta$-robust, for each $i\in [h]$,
there are $\eta$ internally disjoint members of $\F$ with leaf vector $\langle x_i,y_i\rangle$. Since $\eta>4kh$, we can
greedily find vertex disjoint members $F_1,\dots F_h$ of $\F$ such that for each $i\in [h]$, $F_i$ has leaf vector $\langle x_i,y_i\rangle$ and that the internal vertices of the $F_i$'s are outside
$\bigcup_{i=1}^h V(Q_i)$. Now $\{(Q_i\cup F_i)\setminus Q[P_0]: i\in [h]\}$ forms a collection of internally disjoint $u,v$-paths of length $2k$ in $G$. Hence, $u,v$ are $(2k,h)$-linked in $G$. 

Next, we prove the second statement. Let $Q_1,\dots, Q_p$ satisfy the given condition.
For each $i\in [p]$, let $v_i$ be the vertex of distance $j - k$ away from $v$ along $Q_i$. Fix any $i,\ell\in [p],i\neq \ell$. By our assumption, $Q_i\cup Q_\ell$ is a $2k$-path and contains a member $F$ of $\F$. Since $F$ has length $j$ and $Q_i$ has length $k$, $F$ must contain the portion of $Q_\ell$ from $v$ to $v_\ell$. Similarly, $F$ must also contain the portion of $Q_i$ from $v$ to $v_i$.
So $v_i,v_\ell$ are the endpoints of a subpath of $F$ of length $2j-2k$.
By statement one of the lemma, $v_i$ and $v_\ell$ are $(2k, 2p^2kr)$-linked in $G$.
We can thus greedily construct a copy of $K_p^{(2k)}(r)$ in $G$ with the $v_1, \dots v_p$ being the branching vertices.
\end{proof}

Now, we are ready to prove the second main theorem of the section.

\begin{theorem}\label{thm:kp-regular}
Let $\mu, k, r,p$ be  positive integers. There exist  positive constants $C''$ and $L''$ depending on $\mu, k, p, r$ such that the following holds.
Let $G$ be a $\mu$-almost-biregular $K_p^{(2k)}(r)$-free bipartite graph with an ordered partition $(M,N)$ where $|M|\leq |N|$. We have $e(G)\leq C''|M|^{\frac{1}{2}+\frac{1}{2k}} |N|^{\frac{1}{2}}+L''|N|$.
\end{theorem}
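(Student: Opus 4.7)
The plan is to follow the counting scheme from the proof of Theorem~\ref{thm:upper-bound}, but to replace the uniform control on heavy subpaths supplied there by Lemma~\ref{lem:bounds-on-paths} (which relies on $K_{s,t}^{(2k)}(r)$-freeness) with a more delicate combination of Lemma~\ref{goodlightpaths}, Lemma~\ref{lem:2pathsodd}, and Lemma~\ref{lem:linkingpathskp} tailored to the $K_p^{(2k)}(r)$-free setting. Let $P$ be a vertex-labeled path of length $2k$ and let $\F$ denote the family of copies of $P$ in $G$ starting and ending in $M$. A greedy argument, assuming $d_M,d_N$ are large enough (the degenerate regime being absorbed into the $L''|N|$ term), gives $|\F|\geq \frac{1}{2^{2k}}|M|d_M^{k}d_N^{k}$. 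If all but a constant fraction of $\F$ consists of $\eta$-light copies for a suitable $\eta=\eta(k,p,r)$, then since the leaf vector of an $\eta$-light copy ranges over $M\times M$ and supports at most $\eta^{(2k)^2}$ such copies, we obtain $|M|d_M^{k}d_N^{k}\lesssim |M|^{2}\eta^{(2k)^2}$; combined with $e(G)=|M|d_M=|N|d_N$, this rearranges to the desired bound $e(G)\leq C''|M|^{\frac{1}{2}+\frac{1}{2k}}|N|^{\frac{1}{2}}$.

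The bad copies are those containing an $\eta$-heavy $j$-subpath for some $2\leq j\leq 2k$, and by Lemma~\ref{lem:admissible-paths} it suffices to bound copies containing an $\eta$-heavy and $\eta$-admissible $j$-subpath. I split by length $j$ and, when $j=2$, by whether the 2-path is of $(M,M)$-type or $(N,N)$-type. For $(M,M)$-type heavy 2-subpaths, I would restrict attention at the outset to $2k$-paths whose $(M,M)$-type $2$-subpaths are light: Lemma~\ref{goodlightpaths} supplies a collection of $k$-paths with this property, and I would extend them (or concatenate them pairwise at a common endpoint) to form a subfamily of $\F$ still of size $\Omega(|M|d_M^{k}d_N^{k})$. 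For $(N,N)$-type heavy 2-subpaths, I would apply Lemma~\ref{lem:2pathsodd} after first establishing the combinatorial reduction that $K_p^{(2k)}(r)$-freeness of $G$ implies $K_{p',p'}^{(k)}(1)$-freeness for $p'=r\binom{p}{2}$; this is because inside any copy of $K_{p',p'}^{(k)}(1)$ one can build a copy of $K_p^{(2k)}(r)$ by pairing up $k$-paths through $r\binom{p}{2}$ pairwise distinct middle vertices on one side of the bipartition. This reduction requires $k$ to be odd and $k\geq 3$; when $k$ is even (or $k$ is small), I would instead mimic the proof of Lemma~\ref{lem:2pathsodd} directly, assembling the contradictory structure as a copy of $K_p^{(2k)}(r)$ rather than of $K_{p,p}^{(k)}(r)$.

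For heavy admissible $j$-subpaths with $3\leq j\leq 2k$, I would extract an $\eta$-robust subfamily using Lemma~\ref{lem:robust-subfamily} and then appeal to Lemma~\ref{lem:linkingpathskp}. When $k+1\leq j\leq 2k$, part~(2) of that lemma directly produces a copy of $K_p^{(2k)}(r)$ at a suitable vertex, contradicting the hypothesis. When $3\leq j\leq k$, I would first use Lemma~\ref{lem:path-linkage} to show that endpoints of $j$-paths in the robust family are $(2k,h)$-linked for a suitable $h$, then use the linkage together with a greedy extension to produce a robust family of longer paths of lengths in $[k+1,2k]$, and apply Lemma~\ref{lem:linkingpathskp}(2) as before.

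The main obstacle will be the combinatorial reduction from $K_p^{(2k)}(r)$-freeness to the auxiliary freeness hypotheses needed by Lemma~\ref{lem:2pathsodd}, especially when $k$ is even and a clean reduction to a $K_{p,p}^{(k')}(\cdot)$-free condition with $k'$ odd is not available, so that an analogous argument has to be run by hand in the $K_p^{(2k)}(r)$-free setting; this is exactly the ``delicate role of paths of length two'' flagged in the introduction. A secondary but nontrivial challenge is the bookkeeping: after discarding bad copies across the $O(k)$ different categories (one per length and, for $j=2$, per type), a constant fraction of $|\F|$ must remain so that the final leaf-vector comparison yields the sought bound.
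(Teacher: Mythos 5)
Your proposal correctly identifies the right ingredients (Lemmas~\ref{goodlightpaths}, \ref{lem:2pathsodd}, \ref{lem:linkingpathskp}, \ref{lem:robust-subfamily}, \ref{lem:bounds-on-paths}) and the right skeleton (bound the light $2k$-paths by $|M|^2\eta^{(2k)^2}$, bound them below by $\Omega(|M|d_M^k d_N^k)$, solve for $e(G)$), but it omits the central mechanism of the paper's proof: the Ramsey--Tur\'an coloring argument, and without it two of the removal steps you describe cannot be carried out.

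First, your plan for the $(M,M)$-type heavy $2$-paths does not work as stated. You restrict to $2k$-walks obtained by concatenating two $k$-paths from the collection supplied by Lemma~\ref{goodlightpaths}, each of whose internal $(M,M)$-type $2$-subpaths is light. But when $k$ is odd the two $k$-paths share an endpoint $v\in N$, and the concatenation creates a brand-new $(M,M)$-type $2$-path $xvy$ (between the two second-to-last vertices) that is a $2$-subpath of neither $k$-path and has no reason to be light. There is no way to discard these $2k$-walks by a counting argument alone, because $\eta$-heaviness at the concatenation vertex is not a rare event in the a priori count. The paper's fix is the auxiliary edge-coloured graph $F_v$ on the $k$-paths of $\Pa$ ending at $v$: pairs with a heavy $2$-path at $v$ receive colour $2$, and a monochromatic clique of size $p^2kr$ in colour $2$ supplies the $K_q$ needed to build $K_q^{(2)}\supseteq K_p^{(2k)}(r)$, a contradiction. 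Second, and similarly, your plan to handle heavy admissible $j$-paths for $k+1\le j\le 2k$ by "extracting an $\eta$-robust subfamily and directly applying Lemma~\ref{lem:linkingpathskp}(2)" cannot succeed: Lemma~\ref{lem:linkingpathskp}(2) does not take a robust family as input; it requires a specific vertex $v$ and $p$ paths of length $k$ through $v$ whose pairwise unions each contain a member of the robust family. A robust family by itself gives no such vertex. Again, the paper produces exactly this input from a monochromatic clique of size $2p^2kr$ in colour $j$ in $F_v$, and the existence of such a clique is forced by Ramsey's theorem if $F_v$ has too many edges. Tur\'an's theorem then converts "$F_v$ has few cliques" into "$F_v$ has many non-edges", which gives the lower bound on the family $\cL$ of $2k$-walks containing neither a heavy $2$-path at the midpoint nor a member of any $\F'_{j,i}$; this is the only place where a lower bound of order $|M|d_M^k d_N^k$ on the good walks can actually be established. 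Your "bookkeeping" concern is a symptom of the missing idea: categories (i) (non-paths), (iii) (short heavy paths, directly bounded by Lemma~\ref{lem:bounds-on-paths} rather than your more elaborate linkage plan), (ii) (heavy $(N,N)$-type $2$-paths via Lemma~\ref{lem:2pathsodd}), and (iv) ($\F_{j,i}\setminus\F'_{j,i}$) really can be discarded by counting alone, but colour $2$ and colours $k+1,\dots,2k$ cannot; they must go through Ramsey--Tur\'an.

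A smaller remark: your reduction from $K_p^{(2k)}(r)$-freeness to $K_{p',p'}^{(k)}(1)$-freeness with $p'=r\binom{p}{2}$ is a perfectly sound way to justify invoking Lemma~\ref{lem:2pathsodd}, and it is arguably cleaner than the reduction the paper states; but it only helps with the $(N,N)$-type $2$-paths and is orthogonal to the gap above.
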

\begin{proof}  Throughhout this proof, $\eta$-heavy and $\eta$-admissible are all relative to $G$. We will specify $C'', L''$ later. Assume on the contrary that $e(G) \geq C'' |M|^{\frac{1}{2}+\frac{1}{2k}} |N|^{\frac{1}{2}} + L'' |N|$.  By choosing $L''$ sufficiently large, we may assume $\delta(G) \geq D_2(k, \mu, p, r)$ returned by Lemma~\ref{goodlightpaths}. Applying Lemma~\ref{goodlightpaths}, there exists a $\gamma = \gamma(k, \mu, p, r)$ such that $G$ contains a collection $\Pa$ of at least $\gamma |M|d_M^{\ceil{\frac{k}{2}}}d_N^{\floor{\frac{k}{2}}}$ labeled $k$-paths starting at $M$ such that each internal $2$-path starting at $M$ is $\eta$-light.    Recall that the Ramsey number $R_{s}(t)$ is the least number $q$ such that any $K_q$ edge-colored with $s$ colors will be guaranteed to have a monochromatic clique on $t$ vertices. Let 
    \begin{equation} \label{eq:epsilon-definition}
    \ve = \frac{\gamma^2}{2^{11}k^2\mu^{2k}R_{k + 1}(2p^2kr)}, \mbox{ and }
\eta = 2^5 k^2p^8 r^2 \mu^3 \left( \frac{2}{\ve} \right)^{2p^2}.
\end{equation}

    For each $k+ 1 \leq j \leq 2k$,
    let $\F_{j, 1}$ be the family of $\eta$-heavy and $\eta$-admissible labeled $j$-paths starting at $M$ and let $\F_{j, 2}$ be the family of $\eta$-heavy and $\eta$-admissible labeled $j$-paths starting $N$. By picking $L'' \geq 2 \mu C_1(\eta, \ve, \mu, P_{j+1})$ from Lemma~\ref{lem:robust-subfamily} for every $j$,  we may apply  Lemma~\ref{lem:robust-subfamily} to ensure that

{\bf Claim 1.} For each $j=k+1,\dots 2k$, $i=1,2$, $\F_{j, i}$  contains an $\eta$-robust subfamily $\F_{j, i}'$ such that $|\F_{j, i}' \setminus \F_{j, i}|$ is less than $\ve e(G)d_M^{\floor{\frac{j}{2}} - 1}d_N^{\ceil{\frac{j}{2}} - 1}$ if $i = 1$ and $\ve e(G)d_M^{\ceil{\frac{j}{2}} - 1}d_N^{\floor{\frac{j}{2}} - 1}$ if $i = 2$.

 Now,
Let $\F'=\bigcup_{j=k+1}^{2k}\bigcup_{i=1}^2 \F'_{j,i}$. Also, let $\F_N$ denote the family of all $\eta$-heavy $2$-paths in $G$ that start in $M$ (hence centered in $N$).

    For any $v \in V(G)$, we define an edge-colored graph $F_v$ with $V(F_v) = \{ P \in \Pa: v \text{ is the last vertex on } P\}$ as follows.  For any $P,Q\in V(F_v)$ with $P \cup Q$ a path, if the $2$-subpath of $P\cup Q$ centered at $v$ is in $\F_N$, we add $PQ$ to $E(F_v)$ and color it with color $2$; otherwise, if $P\cup Q$ contains an $\eta$-heavy $\eta$-admissible path in $\F'_{j, i}$ for some $j \geq k + 1$ and $i \in \{1, 2\}$ then we fix a $j$, add $PQ$ to $E(F_v)$ and color $PQ$ with color $j$. Observe if $k$ is odd, $F_v$ is nonempty only for $v \in N$, likewise if $k$ is even $F_v$ is nonempty only for $v \in M$. 
    

{\bf Claim 2.} $F_v$ has no clique on $R_{k +1}(2p^2kr)$ vertices. 

{\bf Proof of Claim 2.} Suppose for contradiction that $F_v$ has a clique on $R_{k + 1}(2p^2kr)$ vertices, then it would contain a monochromatic clique $K$ of size $2p^2kr$ in one of the colors from $\{2\}\cup\{k+1,\dots, 2k\}$. If $K$ is in color $2$,
then using the proof of Lemma~\ref{goodlightpaths} and the fact that $\eta\geq p^4k^2r^2$, we can find a copy of $K_p^{(2k)}{(r)}$ in $G$, a contradiction. So, suppose $K$ is colored $j$, for some $k+1\leq j\leq 2k$.
Then, by Lemma~\ref{lem:linkingpathskp} part (2) and $\eta > 4p^2k^2r$, $G$ would contain a copy of $K_p^{(2k)}(r)$, a contradiction. \hfill $\Box$

Let $\cL$ be the family of  $2k$-walks in $G$ of the form $P\cup Q$, where $P,Q\in \Pa$ such that $P\cup Q$ does not contain any member of $\F'$ or $\F_N$. Thus, we have that $|\cL|$ is exactly the sum of the number of nonedges in each $F_v$.  We give a lower bound on $|\cL|$ as follows. First, consider any $F_v$. If $|V(F_v)|\geq 2R_{k+1}(2p^2kr)$, then
    since $F_v$ has no clique on $R_{k + 1}(2p^2kr)$ vertices, it must have at least $\frac{1}{2 R_{k + 1}(2p^2kr) }\binom{|V(F_v)|}{2}$ nonedges by Tur\'an's Theorem. 
    Hence, for any $v$, $F_v$ contains at least $\frac{1}{2 R_{k + 1}(2p^2kr) }\binom{|V(F_v)|}{2}-2R^2_{k+1}(2p^2kr)$ nonedges. Hence,
    $|\cL|\geq \sum_{v\in V(G)} [\frac{1}{2 R_{k + 1}(2p^2kr) }\binom{|V(F_v)|}{2}-2R^2_{k+1}(2p^2kr)]$.  
    Note that $\sum_{v\in V(G)} |V(F_v)| \geq |\Pa|\geq \gamma|M|d_M^{\ceil{\frac{k}{2}}}d_N^{\floor{\frac{k}{2}}}$. If $k$ is odd, then $\sum_{v\in V(G)} |V(F_v)|=\sum_{v\in N} |V(F_v)|$ and
    \[ \sum_{v\in N} \binom{|V(F_v)|}{2} \geq \frac{1}{4|N|}\left( \sum_{v\in N} |V(F_v)| \right)^2 \geq \frac{\gamma^2 |M|^2}{4 |N|}d_M^{2\ceil{\frac{k}{2}}}d_N^{2\floor{\frac{k}{2}}} = \frac{\gamma^2 }{4 }|M|d_M^{k}d_N^{k}.\]
If $k$ is even, then $\sum_{v\in V(G)} |V(F_v)|=\sum_{v\in M} |V(F_v)|$ and
\[\sum_{v\in M} \binom{|V(F_v)|}{2} \geq \frac{1}{4|M|}\left( \sum_{v\in M} |V(F_v)| \right)^2 \geq \frac{\gamma^2 }{4}|M|d_M^{k}d_N^{k}.\]

Choose $L''$ to be sufficiently large so that $\frac{\gamma^2 }{8 R_{k + 1}(2p^2kr)}L''\geq 4R^2_{k+1}(2p^2kr)$. This ensures that
\[|\cL|\geq \frac{\gamma^2 }{8 R_{k + 1}(2p^2kr)}|M|d_M^{k}d_N^{k}-2R^2_{k+1}(2p^2kr)|N|\geq \frac{\gamma^2 }{16 R_{k + 1}(2p^2kr)}|M|d_M^{k}d_N^{k},\]
where we used $|M|d_M^kd_N^k\geq e(G)\geq L''|N|$. By the definitions of $\Pa, \F_N$, and $\cL$, each member of $\cL$ contains no $\eta$-heavy $2$-path that starts in $M$.

We also choose $L'' \geq 2 \mu D_1(k, p^2, \ve, \mu)$ returned by Lemma~\ref{lem:2pathsodd} and $L'' \geq 2 \mu D_0(k, p, p^2, \ve, \mu, \eta) $ returned by Lemma~\ref{lem:bounds-on-paths} when forbidding $K_{p, p^2}^{(k)}(r)$. Note that as $G$ is  $K_p^{(2k)}(r)$-free, $G$ is $K_{p, p^2}^{(k)}(r)$-free. 

In what follows, we will implicitly use the fact that for any $j$-path in $G$ that has $a$ vertices in $M$ and $b$ vertices in $N$, the number of ways to extend it to a $2k$-path that has $a'$ vertices in $M$ and $b'$ vertices in $N$ is at most $2k (\mu d_M)^{b'-b} (\mu d_N)^{a'-a}$.
    We observe the following: 
    \begin{enumerate}
        \item [(i)] The number of members of $\cL$ that are non-paths is no more than $4 k^2\mu^{2k - 1}\max\{ |M|d_M^kd_N^{k - 1}, |M|d_M^{k - 1}d_N^{k} \} =k^2\mu^{2k - 1}|M|d_M^kd_N^{k - 1} $. 
        \item [(ii)] The number of members in $\cL$ containing a $\eta$-heavy and $\eta$-admissible $2$-path starting at $N$ is at most $2 \ve k \mu^{2k - 2}|M|d_M^kd_N^k$ by Lemma~\ref{lem:2pathsodd} and the implicit fact mentioned earlier.
        \item [(iii)] The number of members in $\cL$ containing an $\eta$-heavy and $\eta$-admissible $j$-path for some $3 \leq j \leq k $ is at most $ 8 \ve k^2 \mu^{2k - 2}|M|d_M^kd_N^k $ by Lemma~\ref{lem:bounds-on-paths} and the implicit fact mentioned earlier.
        \item[(iv)] The number of members in $\cL$ containing a member of $ \F_{j, i} \setminus \F_{j, i}'$ for any $j\in [k+1, 2k], i\in [2]$ is at most $4\ve k^2 \mu^{k - 1} |M|d_M^kd_N^{k}$ by Claim 1 and the implicit fact mentioned earlier. 
    \end{enumerate}

Picking $L'' \geq \frac{\mu}{\ve}$, the number of members of $\cL$ counted in i-iv is no more than $32 \ve k^2 \mu^{2k - 2}|M| d_M^k d_N^k$. Thus by our choice of $\ve$ given in \eqref{eq:epsilon-definition}, we can remove these members of $\cL$ and be left with a subcollection $\cL'$ of $\cL$  of size $\frac{1}{32 R_{k + 1}(2p^2kr) }\gamma^2 |M|d_M^k d_N^k$ such that its members  contain no $\eta$-heavy and $\eta$-admissible path. Indeed, $\eta$-heavy $\eta$-admissible $2$-paths starting at $M$ are excluded since we established earlier that they are not contained in any member of $\cL$. The $\eta$-heavy $\eta$-admissible $2$-paths starting at $N$ are excluded since we removed those members of $\cL$ satisfying ii. Paths containing an $\eta$-heavy and $\eta$-admissible $j$-path for $3 \leq j \leq k$ are removed by item iii. 
By the definition of $\cL$, members of $\cL$ do not contain any member of $\F'$. Hence,
any member of $\cL$ that  contain an $\eta$-heavy and $\eta$-admissible $j$-path for $ k + 1\leq j \leq 2k$ must contain a member of $\bigcup_{j=1}^k \bigcup_{i=1}^2 (\F_{j,i}\setminus \F'_{j,i})$, which are removed by item iv.

    Since any $\eta$-heavy path contains an $\eta$-heavy and $\eta$-admissible path, we have that every member in $\cL'$ is $\eta$-light. But since $e(G)\geq C'' M^{\frac{1}{2}+\frac{1}{2k}}N^{\frac{1}{2}}$, we have that: 
    \begin{align*}
        |\cL'| &\geq \frac{1}{32 R_{k + 1}(2p^2kr) }\gamma^2 |M|d_M^k d_N^k\\
        &\geq \frac{1}{32 R_{k + 1}(2p^2kr) }\gamma^2  |M|^{1 - k}|N|^{-k} e(G)^{2k}\\
        &\geq( C'')^{2k}  \frac{\gamma^2}{32 R_{k + 1}(2p^2kr) } |M|^2.
    \end{align*}
    Selecting $C'' \geq \eta^{2k}(32 R_{k + 1}(2p^2kr))^{1/2k} \gamma^{-1/k}$, we have that $|\cL'| \geq \eta^{(2k)^2} |M|^2$, and thus by averaging there is a pair of vertices hosting at least $\eta^{(2k)^2}$ members of $\cL'$, a contradiction to every member of $\cL'$ being $\eta$-light.
\end{proof}

Now, we are ready to prove Theorem \ref{thm:main-kp}.

\begin{proof}[Proof of Theorem~\ref{thm:main-kp}]
Let $p, k, r$ be positive integers. Let $G$ be a bipartite graph with parts $(M, N)$ and $e(G) \geq \frac{A}{\lambda}(|M|^{\frac{1}{2} + \frac{1}{2k}}|N|^{\frac{1}{2}} + |N|\log|M|)$, where $A = \max\{ C'', L''\}$ with $C'', L''$ from Theorem~\ref{thm:kp-regular} and $\lambda$ is from Corollary \ref{cor:biregular} with $\alpha=\frac{1}{2}+\frac{1}{2k}, \beta=\frac{1}{2}$ and $|M| \leq |N|$. We will show $G$ has a copy of $K_p^{(2k)}(r)$. By Corollary \ref{cor:biregular}, $G$ contains a bipartite graph $G'$ with parts $M', N'$ such that $e(G') \geq A(|M'|^{\frac{1}{2}  + \frac{1}{2k}}|N'|^{\frac{1}{2}} + |M'| +|N'|)$. 
By Theorem~\ref{thm:kp-regular}, $G'$ and thus $G$ contains a copy of $K_p^{(2k)}(r)$.  Therefore, 
$\ex(m, n, K_{p}^{(2k)}(r) \leq O(m^{\frac{1}{2} + \frac{1}{2k}}n^{\frac{1}{2}} + n \log m)$. 
\end{proof}

\section{Lower bounds} \label{sec:lower}

In this section, we aim to show that for infinitely many pairs of $m, n$ the upper bound we established for $K_{s, t}^{(2k)}$ is asymptotically tight if $t$ is sufficiently large in terms of $s, k$, and also for fixed $s,t$ and sufficiently large $r$, the upper bound on for $K_{s,t}^{2k}(r)$ is asymptotically tight.

To do this, we make the following definition, adapted from Bukh and Conlon \cite{BC}. 
Let $T$ be a tree with an ordered bipartition $(A, B)$. Let $R$ be the set of leaves of  $T$. For a subset $S \subseteq V(T) \setminus R$, let $e(S) = |\{e \in E(T): e \cap S \neq \emptyset\}|$. 
Bukh and Conlon \cite{BC} defined  $T$ to be {\it balanced} if for all $S \subseteq V(T) \setminus R$, $$e(S) \geq \frac{|S|}{|V(T) \setminus R|} e(T). $$

Here, we define $T$ to be \textit{$\alpha$-balanced} relative to $(A,B)$ for some $\alpha \in \RR^+$, if for every $S \subseteq V(T) \setminus R$, 
\begin{equation} \label{eq:alpha-balance}
    e(S) \geq \frac{\alpha| S \cap A| + |S\cap B|}{\alpha |A \setminus R| +  |B \setminus R|}e(T).
\end{equation}
Observe that the function $g: \alpha \rightarrow \frac{\alpha| S \cap A| + |S\cap B|}{\alpha |A \setminus R| +  |B \setminus R|}e(T) $ is monotone in $\alpha$ on $(0,\infty)$. Thus, if $T$ is $\alpha_1$-balanced and $\alpha_2$-balanced, then $T$ is $\alpha$-balanced for every $\alpha \in [\alpha_1, \alpha_2]$.
Note that a balanced tree $T$ is  $1$-balanced relative to either of the two ordered bipartitions of $T$. However, a balanced tree $T$ may not be $\alpha$-balanced  relative to some bipartition $(A,B)$ for some $\alpha$. For instance, examine the path on four vertices with endpoints $u,v$ and add to each vertex  a leaf neighbor, let the resulting tree be $T$ and fix $(A,B)$ the bipartition of it in which $v\in B$. One can check $T$ is balanced but does not satisfy  \eqref{eq:alpha-balance} for $\alpha=\frac{2}{5}$ and $S = \{v\}$.  
 
The following is a useful lemma for $\alpha$-balanced trees.

\begin{lemma}\label{lem:e(H)}
Let $s$ be a positive integer and $\alpha$ a positive real. Let $T$ be a vertex-labeled tree with an ordered bipartition $(A,B)$ and $R$ the set of leaves. Suppose that $T$ is $\alpha$-balanced relative to $(A,B)$. Let $H\in \T^s(A,B)$. Then
 \[e(H) \geq \frac{ \alpha |A_H\setminus R| + |B_H \setminus R|}{\alpha |A \setminus R| + |B \setminus R| } e(T).\]
\end{lemma}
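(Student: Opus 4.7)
The plan is to reduce the inequality to a direct application of the $\alpha$-balance condition \eqref{eq:alpha-balance} via an additive decomposition of $H$. Although the notation $\T^s(A,B)$ refers to a family of trees that are assembled from $s$ (sub)copies of $T$ glued only at leaves (so the internal vertices of the different copies are distinct), the key structural property one needs from this family is the following: every $H \in \T^s(A,B)$ admits a decomposition into subtrees $H_1, \dots, H_s$, each being a subtree of (a labeled copy of) $T$, such that the decomposition partitions $E(H)$ and such that the non-leaf vertices of $H$ are partitioned across the $H_i$.

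First, I would formalize this decomposition by unwinding the construction of $\T^s(A,B)$, writing $S_i := V(H_i)\setminus R$, viewed as a subset of $V(T)\setminus R$ via the canonical labeling. The fact that leaves (and only leaves) can be identified across copies gives the two crucial identities
\[
e(H) \;=\; \sum_{i=1}^{s} e_T(S_i), \qquad
\sum_{i=1}^{s} |S_i \cap A| \;=\; |A_H \setminus R|, \qquad
\sum_{i=1}^{s} |S_i \cap B| \;=\; |B_H \setminus R|.
\]

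Next, I would apply the hypothesis that $T$ is $\alpha$-balanced relative to $(A,B)$ to each $S_i$, obtaining
\[
e_T(S_i) \;\geq\; \frac{\alpha\,|S_i \cap A| + |S_i \cap B|}{\alpha\,|A\setminus R| + |B\setminus R|}\,e(T).
\]
Summing these inequalities over $i\in [s]$ and invoking the identities above yields
\[
e(H) \;\geq\; \frac{\alpha\bigl(\sum_i |S_i \cap A|\bigr) + \bigl(\sum_i |S_i \cap B|\bigr)}{\alpha\,|A\setminus R| + |B\setminus R|}\,e(T)
\;=\; \frac{\alpha\,|A_H\setminus R| + |B_H\setminus R|}{\alpha\,|A\setminus R| + |B\setminus R|}\,e(T),
\]
which is the desired bound.

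The only real difficulty in executing this plan is the bookkeeping step: one must verify from the definition of $\T^s(A,B)$ that the gluing of the $s$ copies indeed takes place only at leaves (so no non-leaf vertex is shared across two copies and no edge is double-counted) and that the induced labeling sends $S_i$ into $V(T)\setminus R$ consistently with the bipartition $(A,B)$. Once that is pinned down, the rest of the proof is a one-line combination of the $\alpha$-balance condition with the additivity identities, so the mathematical content is fully carried by the hypothesis \eqref{eq:alpha-balance}.
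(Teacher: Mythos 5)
Your plan is the right one, and in fact it is essentially the argument in the paper (the paper packages it as an induction, adding one copy of $T$ at a time; you unroll the induction into a single summation). However, there is a misconception about $\T^s(A,B)$ that you must correct before the argument can be carried out. By the paper's definition, $\T^s(A,B)$ is the family of \emph{all} graphs obtained as a union of $s$ distinct labeled copies of $T$ sharing the same leaf vector $R$; the copies are \emph{not} required to be internally disjoint (that special case is the single graph denoted $T^s(A,B)$). So your assertion that ``no non-leaf vertex is shared across two copies and no edge is double-counted'' is false in general, and the proposed decomposition with $\{E(H_i)\}$ partitioning $E(H)$ and $\{V(H_i)\setminus R\}$ partitioning $V(H)\setminus R$ need not exist. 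For example, let $T$ be the path $r_1\,a\,b\,r_2$ and take the three copies $r_1 a_1 b_1 r_2$, $r_1 a_2 b_2 r_2$, $r_1 a_1 b_2 r_2$: every non-leaf of the union already appears in one of the first two copies, yet the third copy contributes the genuinely new edge $a_1 b_2$.

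The fix is small and brings you exactly to what the paper does. Order the copies $T_1,\dots,T_s$ and set $S_i := V(T_i)\setminus\bigl(R\cup V(T_1)\cup\cdots\cup V(T_{i-1})\bigr)$. These sets partition $V(H)\setminus R$, so your identities $\sum_i |S_i\cap A|=|A_H\setminus R|$ and $\sum_i|S_i\cap B|=|B_H\setminus R|$ still hold. The edge identity must be weakened to an inequality: the sets $\{e\in E(T_i): e\cap S_i\neq\emptyset\}$ are pairwise disjoint subsets of $E(H)$ (if $i<j$ and $e\in E(T_i)$, both endpoints of $e$ lie in $V(T_i)$, so $e$ cannot meet $S_j$, which avoids $V(T_i)$), giving $e(H)\geq\sum_i e_T(S_i)$, and the example above shows equality can fail. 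Since your derivation only uses a lower bound on $e(H)$, the inequality is all that is needed, and after applying the $\alpha$-balance condition to each $S_i$ and summing, you recover the claimed bound. This corrected version is precisely the content of the paper's inductive step $e(H)\geq e(S)+e(H')$.
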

\begin{proof}
    We prove it by induction. The statement is trivially true for $s = 1$. Suppose now the the statement holds for $s - 1$. Then, $H$ is formed from some $H' \in \T^{s-1}$ by adding a copy $T_s$ of $T$. Let $A_s,B_s$ denote the images of $A,B$ in $T_i$, respectively.
    Let $S = V(T_s) \setminus V(H') = V(H) \setminus V(H')$. Then, $e(H) \geq e(S) + e(H')$. Since $T_s$ is $\alpha$-balanced relative to $(A_s, B_s)$ and hence relative to $(A_H,B_H)$, we have that  
    \[e(S) \geq \frac{\alpha| S \cap A_s| + |S\cap B_s|}{\alpha |A_s \setminus R| +  |B_s \setminus R|}e(T)= \frac{\alpha| S \cap A_H| + |S\cap B_H|}{\alpha |A \setminus R| +  |B \setminus R|}e(T).
    \] Combining this with the inductive bound on $e(H')$ and the fact
    that $|A_{H'}\setminus R|+|S\cap A_H|=|A_H\setminus R|$ and 
    $|B_{H'}\setminus R|+| S\cap B_H|=|B_H\setminus R|$, the result follows. 
\end{proof}

Before we prove the our lower bound we need the following lemmas from Bukh and Conlon \cite{BC}. Let $\mathbb{F}_q$ be the finite field with $q$ elements, and let $\bar{\mathbb{F}}_q$ be the algebraic closure. A variety of complexity at most $M$ is a set of zeroes of at most $M$ polynomials in at most $M$ variables, each with degree at most $M$. 
\begin{lemma}[Lemma 2.7 in \cite{BC}] \label{lem:complexity}
    Suppose $W$ and $D$ are varieties over $\bar{\mathbb{F}}_q$ of complexity at most $M$ which are defined over $\FF_q$. Then, one of the following holds for all $q$ sufficiently large in terms of $M$: 
    \begin{enumerate}
        \item $ |W(\FF_q) \setminus D(\FF_q)| \geq q / 2$, or 
        \item  $|W(\FF_q) \setminus D(\FF_q)| < c$, where $c = c_M$ depends only on $M$. 
    \end{enumerate}
\end{lemma}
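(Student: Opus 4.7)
The plan is to prove this dichotomy by combining the Lang--Weil theorem with an effective bound on the irreducible decomposition of $W$. The rough picture is that $|W(\FF_q) \setminus D(\FF_q)|$ is dominated by the positive-dimensional irreducible components of $W$ that are not contained in $D$: each such component contributes roughly $q^{\dim}$ rational points, while zero-dimensional components, or components swallowed by $D$, contribute only $O_M(1)$.

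First I would decompose $W$ over $\bar{\FF}_q$ into irreducible components $W = W_1 \cup \cdots \cup W_N$. A standard effective result from elimination theory ensures that both $N$ and the complexity of each $W_i$ are bounded by some $M' = M'(M)$. Since $W$ and $D$ are defined over $\FF_q$, the Galois action on $\bar{\FF}_q$ permutes the $W_i$, and rational points live on Galois-closed unions, which is convenient for the bookkeeping.

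Next I would analyze each $W_i$ separately. If $W_i \subseteq D$ it contributes nothing to the difference. Otherwise $W_i \cap D$ is a proper closed subvariety, so $\dim(W_i \cap D) \leq \dim W_i - 1$. If $\dim W_i = 0$, then $|W_i(\FF_q)|$ is bounded by a B\'ezout-type constant depending only on $M$. If $\dim W_i \geq 1$, the Lang--Weil theorem yields
$$|W_i(\FF_q)| = q^{\dim W_i} + O_M\!\left(q^{\dim W_i - 1/2}\right),$$
with the implied constant depending only on the complexity (hence only on $M$), together with the strictly lower-order bound $|(W_i \cap D)(\FF_q)| = O_M(q^{\dim W_i - 1})$. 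Subtracting and taking $q$ large in terms of $M$ gives $|W_i(\FF_q) \setminus D(\FF_q)| \geq q/2$.

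Combining across components, either some $W_i \not\subseteq D$ has dimension at least $1$, in which case conclusion (1) holds, or every $W_i \not\subseteq D$ is zero-dimensional and the total count is at most $N \cdot c(M) =: c_M$, giving conclusion (2). The main obstacle is that all the constants --- the number of components, their individual complexities, and the Lang--Weil error term --- must be shown to depend on $M$ alone and not on $q$; establishing this uniform-in-$q$ dependence is standard in the algebraic-geometry literature but is in effect the real content of the lemma, and any clean proof has to route through it.
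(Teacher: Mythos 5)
This lemma is cited (as Lemma~2.7 of Bukh--Conlon) and not proved in the paper, so there is no internal proof to compare your sketch against; I will therefore assess the sketch on its own merits and against the standard Lang--Weil argument it references.

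Your overall strategy --- decompose $W$ into geometrically irreducible components with complexities bounded in terms of $M$, then invoke Lang--Weil on the components not absorbed by $D$ --- is indeed the route taken in the literature. However, the dichotomy in your final paragraph is stated incorrectly, and as written the argument fails. You claim that if some geometric irreducible component $W_i \not\subseteq D$ has dimension at least $1$, then conclusion (1) holds. This is false when $W_i$ is not itself defined over $\FF_q$. Concretely, take $q \equiv 3 \pmod 4$, let $W = \{(x,y) : x^2 + y^2 = 0\} \subseteq \mathbb{A}^2$, and $D = \varnothing$. Over $\bar{\FF}_q$ we have $W = L_1 \cup L_2$, two conjugate lines, each of dimension~$1$ and not contained in $D$; yet $W(\FF_q) = \{(0,0)\}$, a single point, nowhere near $q/2$. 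The Lang--Weil asymptotic $|X(\FF_q)| = q^{\dim X} + O_M(q^{\dim X - 1/2})$ is valid only for $X$ geometrically irreducible \emph{and defined over} $\FF_q$; for a component defined only over a proper extension, the $\FF_q$-points lie on the intersection of its Galois orbit, which has strictly smaller dimension.

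You do mention that Galois permutes the $W_i$ and that ``rational points live on Galois-closed unions,'' but this observation never makes it into the dichotomy, which is where it is actually needed. The correct statement is: if some geometrically irreducible component $W_i$ that is \emph{fixed by Galois} (equivalently, defined over $\FF_q$) has dimension at least $1$ and is not contained in $D$, then Lang--Weil gives $\geq q/2$ points; otherwise, the only contributions come from zero-dimensional components and from intersections $\bigcap_\sigma \sigma(W_i)$ over non-trivial Galois orbits. These intersections are proper subvarieties of the $W_i$, hence of lower dimension, and one iterates (or uses a direct effective bound) to conclude that they carry $O_M(1)$ rational points once one accounts for the fact that their complexities are again controlled by $M$. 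Restating the dichotomy with the Galois-fixedness qualifier, and adding a short argument (e.g.\ induction on dimension, or a direct B\'ezout-type bound) for the orbit-intersection contributions, would close the gap. The constants issue you raise at the end --- uniformity in $q$ --- is handled exactly as you suggest, by the standard effective bounds on the number and complexity of irreducible components.
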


\begin{lemma}[Lemma 2.3 in \cite{BC}]\label{lem:probfield}
    Suppose $q > \binom{t}{2}$ and $d \geq t - 1$. Then if $f$ is a random polynomial of degree at most $d$ in $j$ variables, and $x_1, \dots x_t$ are distinct points in $\mathbb{F}^j_q$, 
    $$\PP(f(x_i) = 0 \text{ for all } i = 1, \dots t) = \frac{1}{q^t}.$$ 
\end{lemma}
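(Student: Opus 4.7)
\medskip

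\textbf{Proof proposal.} The plan is to show that the evaluation map
\[
\phi : f \longmapsto \bigl(f(x_1),\dots,f(x_t)\bigr) \in \FF_q^t,
\]
defined on the $\FF_q$-vector space $V_d$ of polynomials in $j$ variables of degree at most $d$, is surjective. Surjectivity is the key: $\phi$ is an $\FF_q$-linear map between finite vector spaces, so if it is surjective, then every fibre $\phi^{-1}(\mathbf{c})$ has the same cardinality, namely $|V_d|/q^t$. Taking $\mathbf{c}=(0,\dots,0)$ and dividing by $|V_d|$ gives the desired probability $q^{-t}$.

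To establish surjectivity, it suffices to produce, for each $i\in[t]$, a polynomial $g_i\in V_d$ such that $g_i(x_i)=1$ and $g_i(x_\ell)=0$ for all $\ell\neq i$; then an arbitrary target $(c_1,\dots,c_t)\in \FF_q^t$ is realized by $f=\sum_i c_i g_i$. My strategy is to reduce the multivariate interpolation to the one-variable case via a linear form. Namely, I look for a linear form $L(X_1,\dots,X_j)=a_1X_1+\cdots+a_jX_j$ such that the values $L(x_1),\dots,L(x_t)$ are pairwise distinct in $\FF_q$. For each pair $\ell\neq m$, the condition $L(x_\ell)=L(x_m)$ carves out a codimension-$1$ affine subspace of the coefficient space $\FF_q^j$ (since $x_\ell-x_m\neq 0$), containing at most $q^{j-1}$ choices of $(a_1,\dots,a_j)$. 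Since $q>\binom{t}{2}$, a union bound over the $\binom{t}{2}$ pairs leaves at least $q^j-\binom{t}{2}q^{j-1}>0$ admissible linear forms, so a separating $L$ exists.

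Once $L$ is chosen, set $y_\ell:=L(x_\ell)$, which are distinct elements of $\FF_q$. I define
\[
g_i(X_1,\dots,X_j)\;:=\;\prod_{\ell\neq i}\frac{L(X_1,\dots,X_j)-y_\ell}{y_i-y_\ell},
\]
which is a polynomial of total degree $t-1\leq d$, hence lies in $V_d$. By construction $g_i(x_i)=1$ and $g_i(x_\ell)=0$ for $\ell\neq i$, giving the required interpolating system and therefore surjectivity of $\phi$.

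The main obstacle is the construction of the separating linear form $L$; that is the sole place where the hypothesis $q>\binom{t}{2}$ is used. Without it, the $t$ points might be forced to collapse to fewer values under every linear projection, and the above Lagrange-interpolation trick would fail. The second hypothesis $d\geq t-1$ is precisely what makes the interpolation polynomials $g_i$ admissible elements of $V_d$. Given these two hypotheses, the remainder of the argument is a routine linear-algebraic counting in $V_d$.
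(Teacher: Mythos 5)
Your proof is correct and is essentially the same argument that Bukh and Conlon give for their Lemma 2.3; the present paper simply cites the result from \cite{BC} without reproducing a proof. The ingredients you use --- surjectivity of the $\FF_q$-linear evaluation map (hence equal fibre sizes $|V_d|/q^t$), a separating linear form $L$ found by a union bound over the $\binom{t}{2}$ pairs (exactly where $q>\binom{t}{2}$ enters), and univariate Lagrange interpolation of degree $t-1\le d$ --- are precisely the ingredients of the Bukh--Conlon argument.
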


Given a vertex-labeled tree $T$ with an ordered bipartition $(A,B)$, let $\T^p(A,B)$ be the family of all possible bipartite graphs that are formed by taking the union of $p$ distinct labeled copies of $T$ sharing the same leaf vector $R$. Let $T^p(A,B)$ be the unique member of $\T^p$ in which the $p$ copies of $T$ sharing the same leaf vector $R$ are vertex disjoint outside $R$. For each $H \in \T^p(A,B)$, the common leaf vector $R$ of the trees joined to form $H$ will be called the {\it root vector} of $H$. Let $H\in \T^p(A,B)$. Suppose $T_1,\dots T_p$ are the $p$ copies of $T$ whose union is $H$. For each $i\in [p]$, let $A_i$ denote the image of $A$ in $T_i$ and $B_i$ the image of $B$ in $T_i$. It is easy to see that for $H$ to be bipartite, we must have $(\bigcup_{i=1}^p A_i)\cap (\bigcup_{i=1}^p B_i)=\emptyset$. We will define $A_H=\bigcup_{i=1}^p A_i$ and $B_H=\bigcup_{i=1}^p B_i$. Then $(A_H, B_H)$ is a bipartition of $H$. We will call $(A_H,B_H)$ the {\it ordered bipartition} of $H$ associated with $(A,B)$.
For the purpose of the next theorem, we need the notion of oriented trees. Given a tree $T$ together with an ordered bipartition $(A,B)$, we call the triple $(T,A,B)$ an {\it oriented tree}. 
Thus, $(T,A,B)$ and $(T,B,A)$ will be treated as two different oriented trees. 

\begin{lemma}  \label{lem:dense-construction}
    Let $\alpha$ be a positive rational number, and $\F$ a finite family of vertex labeled oriented trees $(T,A_T,B_T)$ such that $T$ is $\alpha$-balanced relative to $(A_T,B_T)$.
    Let \[\rho = \max \left \{\frac{\alpha |A_T \setminus R_T|+ |B_T \setminus R_T|}{e(T)}: (T,A_T,B_T)\in \F\right\}.\]
    Let $\ell$ be the smallest integer such that $\frac{1}{e(T)} \ell$ and $\frac{\alpha}{e(T)} \ell$ are integers for all $(T, A_T, B_T) \in \F$.  Then, there exist integers $C, q_0$ such that for every prime power $q \geq q_0$, there exists a bipartite graph $G$ with parts $M,N$ of sizes $q^{\alpha\ell}$ and $q^{ \ell}$, respectively, and at least $\frac{1}{2} q^{ \ell ( 1 + \alpha - \rho)}$ edges with the property that for each $(T,A_T,B_T)\in \F$ and each $H\in \T^C(A_T,B_T)$, $G$ does not contain a copy of $H$ with $A_H\subseteq M$ and $B_H\subseteq N$.
\end{lemma}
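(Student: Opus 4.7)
I would follow the random polynomial construction of Bukh and Conlon~\cite{BC}. Set $M=\FF_q^{\alpha\ell}$ and $N=\FF_q^\ell$; the choice of $\ell$ forces both $\alpha\ell$ and $\rho\ell$ to be integers (for the $T\in\F$ attaining the maximum in the definition of $\rho$, $\rho\ell=|A_T\setminus R|\cdot(\alpha\ell/e(T))+|B_T\setminus R|\cdot(\ell/e(T))$). Fix a large integer $d=d(\F)$ and independently sample uniformly random polynomials $f_1,\dots,f_{\rho\ell}$ of degree at most $d$ in the $\alpha\ell+\ell$ coordinates of $M\times N$. Define $G$ by declaring $xy\in E(G)$ iff $f_i(x,y)=0$ for every $i\in[\rho\ell]$. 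By Lemma~\ref{lem:probfield}, each potential edge lies in $G$ with probability exactly $q^{-\rho\ell}$, and distinct pairs are independent (lemma with $t=2$ applied to each $f_i$). Hence $\EE[e(G)]=q^{\ell(1+\alpha-\rho)}$ with variance at most $\EE[e(G)]$, so Chebyshev yields $e(G)\ge\tfrac34 q^{\ell(1+\alpha-\rho)}$ with probability $1-o(1)$ as $q\to\infty$.

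To forbid the target subgraphs, fix $(T,A_T,B_T)\in\F$ with root vector $R$ and a placement $\phi\colon R\to V(G)$ respecting the bipartition. The set of extensions of $\phi$ to a copy of $T$ in $G$ coincides with the $\FF_q$-points of a variety cut out by the $\rho\ell\cdot e(T)$ polynomial equations $f_i(\tilde\phi(u),\tilde\phi(v))=0$ for $uv\in E(T)$, $i\in[\rho\ell]$, of complexity bounded by some constant $M^*$ depending only on $\F,d,\alpha,\ell$. By Lemma~\ref{lem:complexity}, for all sufficiently large $q$ the number $N_T(\phi)$ of extensions is either at most a constant $c^*$ or at least $q/2$. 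Setting $C=\max_{T\in\F}c^*_T+1$ guarantees that every root placement with $N_T(\phi)<q/2$ hosts fewer than $C$ copies of $T$ and therefore supports no copy of any $H\in\T^C(A_T,B_T)$.

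It remains to destroy the ``high'' root placements $\phi$ with $N_T(\phi)\ge q/2$. A first moment count using the definition of $\rho$ shows the expected number of copies of $T$ in $G$ is at most $q^{\ell(\alpha|A_T|+|B_T|-\rho e(T))}\le q^{\ell(\alpha|R\cap A_T|+|R\cap B_T|)}$, so Markov bounds the number of high placements by a $(2/q)$-fraction of the $|M|^{|R\cap A_T|}|N|^{|R\cap B_T|}$ possible root placements. At each such $\phi$ I would invoke Lemma~\ref{lem:complexity} iteratively on coordinate projections of the completion variety $V_\phi$ to identify and delete a bounded number of edges of $G$ that drives $N_T(\phi)$ below $C$. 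The main technical obstacle is this last accounting: verifying that summing such deletions across all high placements and the finitely many trees in $\F$ removes only $o(q^{\ell(1+\alpha-\rho)})$ edges, so that combined with the Chebyshev bound on $e(G)$, some realisation of $(f_1,\dots,f_{\rho\ell})$ produces a $G$ with at least $\tfrac12 q^{\ell(1+\alpha-\rho)}$ edges and containing no $H\in\T^C(A_T,B_T)$ for any $T\in\F$.
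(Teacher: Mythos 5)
Your setup matches the paper's: random polynomials of degree $d$ over $\FF_q$, edge set cut out by their simultaneous vanishing, expected edge count $q^{\ell(1+\alpha-\rho)}$, and the dichotomy from Lemma~\ref{lem:complexity} to pin down the constant $C$. Where you diverge is the accounting for the ``high'' root placements, and this is where your proposal has a genuine gap.

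You bound the expected number of high placements by Markov applied to the \emph{first} moment of the number of copies of $T$, getting a $(2/q)$-fraction of the roughly $q^{\ell(\alpha|R\cap A_T|+|R\cap B_T|)}$ possible root placements. Even under the most optimistic deletion scheme of $O(1)$ edges per bad placement, this removes on the order of $q^{\ell(\alpha|R\cap A_T|+|R\cap B_T|)-1}$ edges. Comparing with the $q^{\ell(1+\alpha-\rho)}$ edges available, you would need $\ell(\alpha|R\cap A_T|+|R\cap B_T|)-1<\ell(1+\alpha-\rho)$, which after simplification (e.g.\ for a path, where $|R\cap A_T|=|R\cap B_T|=1$) reduces to $\rho\ell<1$. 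Since $\rho\ell$ is a positive integer by the choice of $\ell$, this fails, so the deletion cost swamps the whole graph. Your suggested remedy of ``iterating Lemma~\ref{lem:complexity} on coordinate projections to delete a bounded number of edges'' is not only undeveloped but conceptually off: the variety $V_\phi$ is determined by the fixed polynomials, not by the current edge set of $G$, so deleting edges of $G$ does not shrink it.

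The missing ingredient is the high-moment bound. The paper fixes $k=2\ell(1+\alpha)\max_T v(T)$ and bounds $\EE[|S(T,Z)|^k]$ by a constant depending only on $k$ and $v(T)$, using the $\alpha$-balancedness through Lemma~\ref{lem:e(H)} to show that for every $H\in\bigcup_i\T^i(A_T,B_T)$ the exponent $\alpha|A_H\setminus R|+|B_H\setminus R|-\rho\,e(H)$ is nonpositive. This yields $\PP\bigl(|S(T,Z)|\ge q/2\bigr)\le (kv(T))^{2kv(T)}2^k q^{-k}$, which is polynomially small of arbitrarily high order (since $k$ may be taken large relative to $\ell(1+\alpha)|R_T|$). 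That strength is what permits the blunt but simple deletion rule of removing \emph{all} edges incident to one vertex of each bad $Z$, at a cost of at most $q^{\ell}$ edges per bad $Z$, while keeping the expected total removal below $\frac12 q^{\ell(1+\alpha-\rho)}$. Without the $k$-th moment concentration, the construction cannot be completed, so as written your proof does not go through.
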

\begin{proof}
    Let $k = 2 \ell (1 + \alpha) \max\{v(T): (T,A_T, B_T)\in \F\}$ and $d = k \max\{v(T): (T,A_T,B_T)\in \F\}$.  Fix any prime power $q$ sufficiently large in terms of $\F, \ell$, and $\alpha$, and let $M$ be the set of $\alpha \ell$-tuples over $\FF_q$ and $N$ the set of $\ell$-tuples over $\FF_{q}$.  Note that $|M|=|N|^\alpha$. Let $\rho$ be defined as in the theorem. By our assumption on $\ell$, we have that $\rho \ell$ is a positive integer. Let $\Pa$ be a sequence of $\rho \ell$ independent random polynomials of degree at most $d$ in $(1 + \alpha) \ell$ variables, and define a bipartite graph $G_0$ with parts $M,N$ and edge set $E(G_0) =  \{(a, b)\in M\times N: \forall f \in \Pa, f(a, b) = 0\}$. By Lemma \ref{lem:probfield} and the fact that the polynomials are independently chosen, for each $(a,b)\in M\times N$, $\mathbb{P}((a,b)\in E(G_0))=q^{-\rho\ell}$.  Thus, $\mathbb{E}[|E(G_0)|| = q^{\ell ( 1 + \alpha - \rho)}$.

    Fix an oriented tree $(T, A_T, B_T)\in \F$ with root vector $R_T$. For a $|R_T|$-tuple $Z$ in $V(G_0)$, let $S(T,Z)$ be the set of copies of $T$  in $G_0$ with $R_T$ mapped to $Z$, $A_T$ mapped into $M$ and $B_T$ mapped into $N$. We wish to estimate $\mathbb{E}[S(T,Z)|]$. As in \cite{BC}, for technical reasons, it is convenient to estimate the $k$-th moment.    
    Let $\HH = \bigcup_{i = 1}^k \T^i(A_T,B_T)$. 
    For each $H \in \HH$, let $g(H,Z,k)$ be the number of ordered  collections of $k$ not necessarily distinct copies of $T$ in $M \times N$ with $R_T$ mapped to $Z$, $A_T$ mapped into $M$, $B_T$ mapped into $N$, whose union is a copy of $H$. 
    It is easy to see that $g(H,Z,k)\leq (kv(T))^{kv(T)} |M|^{|A_H \setminus R|}|N|^{|B_H \setminus R|}$.   
    Indeed, $(kv(T))^{k v(T)}$ is an upper bound for the number of labelings of some fixed set by a copy of $H$.
    Thus, by Lemma~\ref{lem:probfield} and the fact the polynomials in $\Pa$ are independently chosen, the expected size of 
    $|S(T,Z)|^k$ satisfies  
   \begin{align*}
       \EE[|S(T,Z)|^k] &= \sum_{H \in \HH} g(H,Z,k) q^{-(\ell e(H))\rho}\\
       &\leq (kv(T))^{kv(T)} \sum_{H \in \HH} |M|^{|A_H \setminus R_T|}|N|^{|B_H \setminus R_T|}q^{- \rho \ell e(H)}\\
       &\leq (kv(T))^{kv(T)} \sum_{H \in \HH} |N|^{ (\alpha |H_A \setminus R_T| + |H_B \setminus R_T| )} |N|^{- \rho e(H)},
   \end{align*}
    where we used $|M|=|N|^\alpha$ and $|N|=q^{\ell}$.    
    By our definition of $\rho$, we have 
$\rho \geq \frac{\alpha |A_T \setminus R_T| +  |B_T \setminus R_T|}{e(T)}$.
Since $T$ is $\alpha$-balanced relative to $(A_T,B_T)$, 
by Lemma \ref{lem:e(H)},
$e(H) \geq  \frac{ \alpha |H_A \setminus R_T| + |H_B \setminus R_T|}{\alpha |A_T \setminus R_T| + |B_T \setminus R_T| } e(T)$.
Therefore, the exponent on $|N|$ in any summand is no more than $0$.
 Therefore, each summand is at most one. As the number of summands is no more than $(kv(T))^{kv(T)}$, we have
 \[\EE[|S(T,Z)|^k] \leq (k v(T))^{2k v(T)}.\]

For convenience, let $p=|A_T\setminus R_T|$ and suppose $A_T\setminus R_T=\{x_1,\dots, x_p\}$.
Let $m=|B_T\setminus R_T|$ and suppose $B_T\setminus R_T=\{y_1,\dots, y_m\}$.
    Let $W(T,Z)$ be the variety defined as the set of $(u_1,\dots, u_p, v_1,\dots, v_m)$ with $u_i\in \FF_{q}^{\alpha \ell}$ for each $i\in [p]$ and $v_i\in \FF_{q}^{\ell}$ for each $i\in [m]$, satisfying
    \begin{enumerate}
        \item For every $f \in \Pa, i\in [p], j\in [m]$,  if $x_iy_j \in E(T)$, $f(u_i, v_j) = 0$.
        \item For every $f \in \Pa, i\in [p], z \in Z \cap V$, if $c \in R_T$ corresponding to $z$ is in an edge with $x_i \in A_T$, $f(x_i, z) = 0$. 
         \item For every $f \in \Pa, j\in [m]], z \in Z \cap U$, if $c \in R_T$ corresponding to $z$ is in an edge with $y_j \in B_T$, $f(z,y_j) = 0$.    
    \end{enumerate}

    Observe that $W(T,Z)$ is a variety in no more than $(1 + \alpha)\ell e(T)$ variables, defined by no more than $\rho \ell e(T)$ polynomials each of degree at most $d$. 

    Let $D(T,Z)$ be the subvariety of $W(T,Z)$ where for some $i,j\in [p], i\neq j$,  $u_i = u_j$, or  for some $i,j\in [m], i\neq j$, 
    $v_i = v_j$, or for some $z \in Z \cap M$ and $i \in [p]$, $z = u_i$, or for some $z \in Z \cap N$ and $j \in [m]$ $z = v_j$. Observe that $D(T,Z)$ is a variety in no more than $(1 + \alpha)\ell e(T)$ variables, defined by no more than $\rho \ell e(T) + v(T)^2$ polynomials each of degree at most $d$.  In particular, setting $M = \max\{(1 + \alpha)\ell e(T),  \rho \ell e(T) + v(T)^2, d\}$ and letting $C = c_M$ by Lemma~\ref{lem:complexity}, we have that either $|W(T,Z) \setminus D(T,Z)| < C$ or $|W(T,Z) \setminus D(T,Z)| \geq \frac{q}{2}$. But observe that by definition, $|W(T,Z) \setminus D(T,Z)| = |S(T,Z)|$. Therefore, 
    
    \[\PP(|S(T,Z)| \geq C ) = \PP(|S(T,Z)| \geq \frac{q}{2}) = \PP(|S(T,Z)|^k \geq 2^{-k} q^{k}) \leq \frac{(k v(T))^{2k v(T)}2^k}{q^{k}}.\]

    Let $X_T$ be the set of $|R_T|$-tuples $Z$ with $|S(T,Z)| \geq C$. Thus, since $k \geq 2 \ell(1 + \alpha) |R_T|$ and there certainly no more than $|R_T|^{|R_T|}q^{\ell(1 + \alpha)|R_T|}$ choices for possible $|R_T|$-tuples, we see that $\EE [|X_T|| \leq \frac{|R_T|^{|R_T|}(k v(T))^{2k v(T)}2^k}{q^{s\ell(1 + \alpha)}}$. For each $Z \in X_T$, we will remove all the edges adjacent to one vertex to form a subgraph $G$, which will remove all copies of $\T^C$ from $G_0$. Since $q$ is sufficiently large in terms of $\F, \alpha$ and $\ell$, we have $\sum_{T \in \F} |R_T|^{|R_T|}(k v(T))^{2k v(T)}2^k \leq \frac{1}{2} q^{\ell ( 1+ \alpha - \rho)}$.  Therefore, 
    \[\EE[E(G)]\geq \EE[|E(G_0)| - \max(q^{\ell}, q^{ \ell \alpha}) \sum_{(T,A_T,B_T) \in \F}|X_T] \geq \frac{1}{2} q^{ \ell ( 1 + \alpha - \rho)},\] as desired.
\end{proof}

We phrase this lemma in terms of families for the following reason. If we want to construct a bipartite graph $G$ with parts $U,V$ that 
does not contain any member $H$ of $\T^p(A,B)$ for a tree $T$ with an ordered bipartition $(A,B)$ we will need to forbid copies of $H$
with $A_H\subseteq U, B_H\subseteq V$ and copies of $H$ with $A_H\subseteq V, B_H\subseteq U$. To achieve that, we apply Lemma \ref{lem:dense-construction} with $\F=\T^p(A,B)\cup \T^p(B,A)$.
We next show that paths and spiders are sufficiently balanced. 

Note that in Lemma \ref{lem:dense-construction}, the construction is at least as large as the trivial lower bound $\Omega(\max\{ q^\ell, q^{\alpha \ell}\})$ only when $\rho$ is between $\alpha$ and $1$, which corresponds to $\frac{|B\setminus R|}{e(T)-|A\setminus R|}\leq \alpha\leq 
\frac{e(T)-|B\setminus R|}{|A\setminus R|}$ or equivalently, $\frac{|B\setminus R|}{|B\setminus R|+|R|-1}\leq \alpha\leq \frac{|A|+|R|-1}{|A\setminus R|}$.
For that reason, given a tree $T$ and an ordered bipartition $(A,B)$ of it, we define the {\it maximal interval} associated with $(A,B)$ as $I_T(A,B)= \left[ \frac{|B \setminus R|}{|B\setminus R|+|R|-1}, 
\frac{|A\setminus R|+|R|-1}{|A \setminus R|} \right]$. We omit the subscript $T$ when the context is clear.
Using $e(T)=|A|+|B|-1$, if we set $\alpha = \frac{|A \setminus R| +|R| - 1}{|A \setminus R|}$ in \eqref{eq:alpha-balance}, we get $e(S)\geq |S|+\frac{|R|-1}{|A\setminus R|} |S\cap A|$. If we divide the numerator and denominator of the right-hand side of \eqref{eq:alpha-balance} by $\alpha$ and set $\alpha=\frac{|B \setminus R|}{|B \setminus R| + |R| - 1}$, then we get $e(S)\geq |S|+\frac{|R|-1}{|B\setminus R|} |S\cap B|$. 
Hence, $T$ is $\alpha$-balanced relative to $(A,B)$ for every $\alpha\in I_T(A,B)$ if and only if for all $S\subseteq V(T)\setminus R$:
\begin{equation} \label{eq:balanced-conditions}
e(S)\geq |S|+\frac{|R|-1}{|A\setminus R|} |S\cap A| \mbox{ and }
e(S)\geq |S|+\frac{|R|-1}{|B\setminus R|} |S\cap B|.
\end{equation}
  
Recall that for a positive integer $m$, $P_m$ denotes the path on $m$ vertices.

\begin{lemma} \label{lem:path-balance}
Let  $P$ be a  path of length at least two and $(A,B)$ an ordered bipartition of $P$. 
Then $P$ is $\alpha$-balanced relative to $(A,B)$ for every $\alpha\in I_P(A,B)$.
Let $k$ be a positive integer. Then $P_{2k}$ is $\alpha$-balanced relative to any ordered partition of it for every $\alpha\in [\frac{k-1}{k}, \frac{k}{k-1}]$ and $P_{2k+1}$ is $\alpha$-balanced relative to both partitions of it for every $\alpha\in [\frac{k}{k+1}, \frac{k+1}{k}]$.
\end{lemma}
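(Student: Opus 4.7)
The plan is to invoke the characterization \eqref{eq:balanced-conditions} the paper has just derived: since every path has exactly two leaves, $|R|=2$, so $P$ is $\alpha$-balanced relative to $(A,B)$ for every $\alpha\in I_P(A,B)$ precisely when
\[e(S)\;\geq\; |S|+\frac{|S\cap A|}{|A\setminus R|} \quad\text{and}\quad e(S)\;\geq\; |S|+\frac{|S\cap B|}{|B\setminus R|}\]
hold for every $S\subseteq V(P)\setminus R$. So the whole task reduces to verifying these two inequalities combinatorially.

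To do that, I would compute $e(S)$ exactly by decomposing $S$ into its maximal ``runs'' of consecutive internal vertices along $P$. If $S$ splits into $m$ runs of sizes $s_1,\dots,s_m$, then a run of size $s_i$ is incident to exactly $s_i+1$ edges (its $s_i-1$ interior edges plus its two boundary edges, which exist because run vertices are internal, not leaves). Runs are separated by at least one vertex of $V(P)\setminus S$, so no edge gets double counted and the clean identity $e(S)=|S|+m$ follows. Now both right-hand sides of the two target inequalities are at most $1$, since $|S\cap A|\leq |A\setminus R|$ and $|S\cap B|\leq |B\setminus R|$. If $S=\emptyset$ every term is $0$; otherwise $m\geq 1$ and so $e(S)=|S|+m\geq |S|+1$ dominates both bounds. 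This proves the first statement in essentially one line beyond the run count.

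For the two explicit consequences, I would simply compute $I_P(A,B)$. For $P_{2k}$, any bipartition puts one leaf in each part, so $|A\setminus R|=|B\setminus R|=k-1$ and $I_P(A,B)=\bigl[(k-1)/k,\,k/(k-1)\bigr]$. For $P_{2k+1}$ both leaves sit in the same part; for one ordering $(|A\setminus R|,|B\setminus R|)=(k-1,k)$ giving $I=\bigl[k/(k+1),\,k/(k-1)\bigr]$, and for the reversed ordering $(k,k-1)$ giving $I=\bigl[(k-1)/k,\,(k+1)/k\bigr]$. Since the claim is that both orderings are $\alpha$-balanced, we intersect the two intervals and obtain exactly $\bigl[k/(k+1),\,(k+1)/k\bigr]$, as asserted.

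The only ``obstacle'' worth flagging is the degenerate case $|A\setminus R|=0$ (namely $P_3$ with $A$ consisting of both leaves), where $I_P(A,B)$ becomes unbounded above and the displayed inequality involves a $0$ denominator. Here $S\subseteq\{v_2\}$ forces $|S\cap A|=0$, so the offending term drops and only the other inequality needs to be checked; the run-counting argument handles it immediately. Apart from this minor bookkeeping, the proof is a direct unpacking of \eqref{eq:balanced-conditions} and the identity $e(S)=|S|+m$.
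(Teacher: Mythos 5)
Your proposal is correct and takes essentially the same route as the paper: both reduce to the characterization \eqref{eq:balanced-conditions} (with $|R|=2$), both use the count $e(S)=|S|+m$ for an $S$ with $m$ connected components (the paper phrases it as ``assume $P[S]$ connected, so $e(S)=|S|+1$'' and sums over components), and both finish by intersecting the two maximal intervals. Your explicit treatment of the $|A\setminus R|=0$ degeneracy for $P_3$ is a small point the paper leaves implicit, but the substance of the argument is the same.
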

\begin{proof} Let $R$ be the set of leaves of $P$. Let $(A,B)$ be an ordered bipartition of $P$. By \eqref{eq:balanced-conditions}, it suffices to show for that any $S\subseteq V(T)\setminus R$,
$e(S)\geq |S|+\frac{1}{|A\setminus R|} |S\cap A|$ and 
$e(S)\geq |S|+\frac{1}{|B\setminus R|} |S\cap B|$.
By considering each component if needed, we may assume that $P[S]$ is connected.
Then $e(S)=|S|+1$. Since $|S\cap A|\leq |A\setminus R|$ and $|S\cap B|\leq |B\setminus R|$,
the inequalities hold.

Now, let $(A,B)$ denote any ordered partition of $P_{2k}$ and $(C,D)$ any ordered partition of $P_{2k+1}$. It is straightforward to verify that $I(A,B)\cap I(B,A)=[\frac{k-1}{k}, \frac{k}{k-1}]$ and $I(C,D)\cap I(D,C)=
   [\frac{k}{k+1}, \frac{k+1}{k}]$. The second part of the lemma then follows from the first part of the lemma.
\end{proof}
Recall that the theta graph $\Theta_{s,k}$
is the graph consisting of $s$ internally disjoint paths of length $k$ sharing the same endpoints.
Note that $\Theta_{s,k}=P_{k+1}^s$. 
The next two theorems deal with bipartite graphs with parts $M,N$ with $|M|\leq |N|$ that are
$H$-free. Since $|M|\leq |N|$, the value of $\alpha$ is upper bounded by $1$. 

\begin{theorem} \label{thm:theta-lower-odd}
Let $k\geq2$ be an integer. Let $\alpha\in [\frac{k-1}{k}, 1]$ be a rational number. Let $\ell$ be a smallest
integer such that $\frac{1}{2k-1}\ell$ and $\frac{\alpha}{2k-1}\ell$ are integers. 
There exist  integers $s_0, q_0$ such
that for every integer $s\geq s_0$ and prime power $q\geq q_0$ there exists a bipartite graph $G$ with parts $M,N$, where
$|M|=q^{\alpha\ell}, |N|=q^\ell$ such that $G$ is $\Theta_{s,2k-1}$-free and 
$e(G) \geq \frac{1}{2} |M|^{\frac{k}{2k - 1}}|N|^{\frac{k }{2k - 1}} .$
\end{theorem}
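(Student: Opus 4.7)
My plan is to apply Lemma \ref{lem:dense-construction} to the path $T = P_{2k}$, viewed as a vertex-labeled tree of length $2k-1$, with both of its ordered bipartitions placed into the family $\F$. Fix any ordered bipartition $(A,B)$ of $P_{2k}$. Since $2k-1$ is odd, the two leaves of $P_{2k}$ lie in opposite parts, so $|A| = |B| = k$, $|A \cap R| = |B \cap R| = 1$, and $|A \setminus R| = |B \setminus R| = k-1$, where $R$ is the leaf set. Note that $\Theta_{s,2k-1}$ is precisely the member $T^s(A,B) \in \T^s(A,B)$ (equivalently $T^s(B,A) \in \T^s(B,A)$) obtained by gluing $s$ pairwise vertex-disjoint copies of $P_{2k}$ at their two leaves. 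Because its two hub vertices lie in opposite color classes in any bipartite embedding into $(M,N)$, a $\Theta_{s,2k-1}$ subgraph of any bipartite graph realizes either a copy of some $H \in \T^s(A,B)$ with $A_H \subseteq M, B_H \subseteq N$, or a copy of some $H \in \T^s(B,A)$ with $B_H \subseteq M, A_H \subseteq N$.

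Set $\F = \{(P_{2k}, A, B), (P_{2k}, B, A)\}$. By Lemma \ref{lem:path-balance}, $P_{2k}$ is $\alpha$-balanced relative to each of its two ordered bipartitions for every $\alpha \in [\tfrac{k-1}{k}, \tfrac{k}{k-1}]$, and the assumed range $[\tfrac{k-1}{k}, 1]$ lies inside this interval. Because $|A \setminus R| = |B \setminus R| = k-1$ and $e(T) = 2k-1$, the quantity $\rho$ is the same for both orientations:
\[
\rho = \frac{\alpha(k-1) + (k-1)}{2k-1} = \frac{(k-1)(1+\alpha)}{2k-1}.
\]
The integer $\ell$ demanded by Lemma \ref{lem:dense-construction} (the smallest positive integer making both $\ell/e(T)$ and $\alpha\ell/e(T)$ integers) is exactly the $\ell$ stated in the theorem. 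Applying Lemma \ref{lem:dense-construction} therefore yields constants $C$ and $q_0$ such that for every prime power $q \geq q_0$, there exists a bipartite graph $G$ with parts of sizes $|M| = q^{\alpha\ell}$ and $|N| = q^\ell$, with at least $\tfrac{1}{2}q^{\ell(1+\alpha-\rho)}$ edges, which contains no copy of any $H \in \T^{C}(A,B)$ with $A_H \subseteq M, B_H \subseteq N$, and no copy of any $H \in \T^{C}(B,A)$ with $B_H \subseteq M, A_H \subseteq N$.

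Take $s_0 = C$. For every $s \geq s_0$, any copy of $\Theta_{s,2k-1}$ contains a copy of $\Theta_{s_0, 2k-1} = T^{s_0}(A,B)$, which by the parity observation above would be a forbidden subgraph in one of the two orientations; hence $G$ is $\Theta_{s,2k-1}$-free. To finish, a direct calculation yields
\[
1 + \alpha - \rho = \frac{(2k-1)(1+\alpha) - (k-1)(1+\alpha)}{2k-1} = \frac{k(1+\alpha)}{2k-1},
\]
so $e(G) \geq \tfrac{1}{2} q^{\ell k(1+\alpha)/(2k-1)} = \tfrac{1}{2} |M|^{k/(2k-1)} |N|^{k/(2k-1)}$, as required. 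No substantial obstacle arises: the whole argument reduces to the balancedness input from Lemma \ref{lem:path-balance}, the bookkeeping of the two orientations to cover the two possible bipartite embeddings of $\Theta_{s,2k-1}$, and the routine computation of $\rho$ and $1 + \alpha - \rho$.
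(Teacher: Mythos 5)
Your proof is correct and follows essentially the same route as the paper: apply Lemma \ref{lem:dense-construction} to $P_{2k}$, use Lemma \ref{lem:path-balance} for the $\alpha$-balancedness, compute $\rho = \frac{(k-1)(1+\alpha)}{2k-1}$, and simplify the exponent. The only cosmetic difference is that you place both orientations $(P_{2k},A,B)$ and $(P_{2k},B,A)$ into $\F$, whereas the paper uses just one, observing that $P_{2k}$ is symmetric with respect to its two ordered bipartitions (the reversing automorphism swaps $A$ and $B$), so one orientation already covers both ways a theta graph could sit in $(M,N)$; your version is harmlessly redundant.
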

\begin{proof}
Note that $P_{2k}$ is symmetric relative to either bipartition. Let $(A,B)$ be either bipartition of it.
We apply Lemma \ref{lem:dense-construction} with $\F=\{(P_{2k},A,B)\}$ 
to find $G$ with parts $M,N$ and $e(G_0)]\geq \frac{1}{2} q^{\ell(1+\alpha-\rho)}$. Substituting in $\rho=\frac{\alpha(k-1)+(k-1)}{2k-1}$, $|M| =q^{\alpha \ell}, |N|=q^{\ell}$ and simplifying, we get $e(G)]\geq \frac{1}{2} |M|^{\frac{k}{2k - 1}}|N|^{\frac{k }{2k - 1}} $.
\end{proof}

Note that our interval for $\alpha$ is optimal since $|M|\leq |N|$ and when $\alpha=\frac{k-1}{k}$, the lower bound matches the trivial lower bound of $\Omega(|N|)$.

\begin{theorem} \label{thm:theta-lower-even}
Let $k\geq1$ be an integer. Let $\alpha\in [\frac{k}{k+1}, 1]$ be a rational number. Let $\ell$ be a smallest
integer such that $\frac{1}{2k}\ell$ and $\frac{\alpha}{2k}\ell$ are integers. 
There exist integers $s_0, q_0$ such
that for every integer $s\geq s_0$ and prime power $q\geq q_0$ there exists a bipartite graph $G$ with parts $M,N$, where
$|M|=q^{\alpha\ell}, |N|=q^\ell$ such that $G$ is $\Theta_{s,2k}$-free and 
$e(G)]\geq \frac{1}{2} |M|^{\frac{1}{2}+\frac{1}{2k}} |N|^{\frac{1}{2}}$.
\end{theorem}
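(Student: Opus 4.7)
The plan is to apply the construction in Lemma \ref{lem:dense-construction} to the tree $T=P_{2k+1}$, using both of its ordered bipartitions. Write $(X,Y)$ for the ordered bipartition of $P_{2k+1}$ in which the part of size $k+1$ (containing both leaves) is listed first, and $(Y,X)$ for the reverse ordering. Note that $e(T)=2k$, $|R|=2$, and in the two orientations we have $(|A\setminus R|,|B\setminus R|)=(k-1,k)$ and $(k,k-1)$ respectively. Since any copy of $\Theta_{s,2k}$ in a bipartite graph with parts $M,N$ must have its two degree-$s$ endpoints in a common class, every copy belongs either to some $H\in\T^s(X,Y)$ with $A_H\subseteq M$ or to some $H\in\T^s(Y,X)$ with $A_H\subseteq M$. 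Hence forbidding all members of $\T^C(X,Y)$ and $\T^C(Y,X)$ embedded with the $A$-class inside $M$ rules out $\Theta_{s,2k}$ whenever $s\geq C$.

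Next, I would check the balance hypothesis. By the second part of Lemma \ref{lem:path-balance}, $P_{2k+1}$ is $\alpha$-balanced relative to both of its ordered bipartitions for every $\alpha\in[\tfrac{k}{k+1},\tfrac{k+1}{k}]$; in particular for our range $\alpha\in[\tfrac{k}{k+1},1]$. Therefore both oriented trees $(P_{2k+1},X,Y)$ and $(P_{2k+1},Y,X)$ qualify for the family $\F$ in Lemma \ref{lem:dense-construction}. The parameter $\rho$ is
\[
\rho=\max\!\left\{\frac{\alpha(k-1)+k}{2k},\ \frac{\alpha k+(k-1)}{2k}\right\}=\frac{\alpha(k-1)+k}{2k},
\]
where the maximum is attained by $(X,Y)$ because $\alpha\leq 1$. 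The definition of $\ell$ in the theorem (smallest integer making $\ell/(2k)$ and $\alpha\ell/(2k)$ integers) matches the divisibility condition required by Lemma \ref{lem:dense-construction}.

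Applying Lemma \ref{lem:dense-construction} with $\F=\{(P_{2k+1},X,Y),(P_{2k+1},Y,X)\}$, for all sufficiently large prime powers $q$ we obtain a constant $C$ and a bipartite graph $G$ with parts $M,N$ of sizes $q^{\alpha\ell}$ and $q^{\ell}$ and at least $\tfrac{1}{2}q^{\ell(1+\alpha-\rho)}$ edges, containing no copy of any $H\in\T^C(X,Y)\cup\T^C(Y,X)$ in the corresponding orientation. A short algebraic check gives
\[
1+\alpha-\rho=\frac{k+\alpha(k+1)}{2k}=\frac{\alpha(k+1)}{2k}+\frac{1}{2},
\]
so that $q^{\ell(1+\alpha-\rho)}=|M|^{\frac{1}{2}+\frac{1}{2k}}|N|^{\frac{1}{2}}$, matching the claimed bound. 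Taking $s_{0}=C$ and $q_{0}$ as supplied by Lemma \ref{lem:dense-construction}, the graph $G$ is $\Theta_{s,2k}$-free for every $s\geq s_{0}$, and the lower bound on $e(G)$ follows.

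There is no real obstacle: the arithmetic of $\rho$ and the exponent computation is the only nontrivial piece, and the balance condition is handed to us by Lemma \ref{lem:path-balance}. The one subtlety worth flagging is the bookkeeping that places both orientations of $P_{2k+1}$ in $\F$, which is needed because $|M|\leq|N|$ forces us to consider $\Theta_{s,2k}$ copies whose degree-$s$ endpoints could lie in either part; without this, only the case where those endpoints lie in $M$ would be excluded.
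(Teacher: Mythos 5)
Your proposal is correct and follows essentially the same route as the paper: apply Lemma \ref{lem:dense-construction} to the family consisting of $P_{2k+1}$ in both of its orientations, use Lemma \ref{lem:path-balance} to verify $\alpha$-balancedness on the required interval, compute $\rho=\frac{\alpha(k-1)+k}{2k}$ (with the maximum attained on the orientation listing the leaf-containing part first, since $\alpha\leq 1$), and simplify the exponent. Your writeup is slightly more explicit than the paper's on two points worth noting: you spell out why both orientations are needed (to kill $\Theta_{s,2k}$ copies whose degree-$s$ endpoints land in $M$ as well as those landing in $N$), and you silently correct what appears to be a typo in the paper's definition of $\F$, where $(P_{2k},A,B)$ should read $(P_{2k+1},A,B)$.
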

\begin{proof}
Let $(A,B)$ be an ordered bipartition of $P_{2k+1}$. 
Apply Lemma \ref{lem:dense-construction} with $\F=\{ (P_{2k}, A,B), (P_{2k+1},B,A)\}$
to find $G$ with parts $M,N$ and $e(G_0)]\geq \frac{1}{2} q^{\ell(1+\alpha-\rho)}$. 
Here, $\rho=\max\{\frac{\alpha(k-1)+k}{2k}, \frac{\alpha k+k-1}{2k}\}=\frac{\alpha(k-1)+k}{2k}$, since $\alpha\leq 1$.
Substituting in $\rho=\frac{\alpha(k-1)+k}{2k}$, $|M| =q^{\alpha \ell}, |N|=q^{\ell}$ and simplifying, we get $e(G_0)]\geq \frac{1}{2} |M|^{\frac{1}{2}+\frac{1}{2k}} |N|^{\frac{1}{2}}$.
\end{proof}
Note that our interval for $\alpha$ is optimal since $|M|\leq |N|$ and when $\alpha=\frac{k}{k+1}$, the lower bound matches the trivial lower bound of $\Omega(|N|)$. Since $K_{s,t}^{(2k)}(r)$ contains $\Theta_{r,2k}$,  Theorem \ref{thm:theta-lower-even} shows that the upper bound in Theorem \ref{thm:main-kst} for $K_{s,t}^{(2k)}(r)$ and the upper bound in Theorem \ref{thm:main-kp} for $K_p^{(2k)}(r)$ are asymptotically tight when $r$ is sufficiently large. In \cite{exponent}, it is shown that a $\Theta_{s,p}$-free bipartite graph with part sizes $m,n$, where $m\leq n$,
has at most $O((mn)^{\frac{1}{2}+\frac{1}{2p}}+m+n)$ edges if $k$ is odd and at most $O(m^{\frac{1}{2}+\frac{1}{p}}n^{\frac{1}{2}}+m+n)$ edges if $k$ is even.
Theorem \ref{thm:theta-lower-odd} and Theorem \ref{thm:theta-lower-even} show that these upper bounds are asymptotically tight for sufficient large $s$.

\begin{lemma}\label{lem:spider-balance}
Let $T$ be a spider of $s$ legs, each with length $k$. Let $(A,B)$ be an ordered bipartition of $T$ such that $A$ contains all the leaves of $T$.
Then $T$ is $\alpha$-balanced relative to $(A,B)$ for every $\alpha\in I_T(A,B)$
and $T$ is $\alpha$-balanced relative to $(B,A)$ for every $\alpha\in I_T(B,A)$.
\end{lemma}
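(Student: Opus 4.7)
The plan is to reduce $\alpha$-balance on $I_T(A,B)$ to verifying the two endpoint inequalities from \eqref{eq:balanced-conditions}: for every $S\subseteq V(T)\setminus R$,
\[
e(S)\;\geq\;|S|+\tfrac{|R|-1}{|A\setminus R|}\,|S\cap A|\quad\text{and}\quad e(S)\;\geq\;|S|+\tfrac{|R|-1}{|B\setminus R|}\,|S\cap B|.
\]
The monotonicity of $g(\alpha)$ noted before the lemma then yields $\alpha$-balance throughout $I_T(A,B)$; since these two inequalities are symmetric in the roles of $A$ and $B$, they also give $\alpha$-balance relative to $(B,A)$ on $I_T(B,A)$, handling both conclusions at once. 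The case $s=1$ reduces to Lemma~\ref{lem:path-balance}, so I assume $s\geq 2$ and hence $|R|=s$.

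Let $c$ denote the center, put $\chi=\mathbbm{1}[c\in S]$, and label the vertices of leg $i$ as $v_0^i,v_1^i,\ldots,v_k^i=c$ with $v_0^i$ the leaf. Set $S_i=S\cap\{v_1^i,\ldots,v_{k-1}^i\}$, let $\sigma_i$ be the number of components of $S_i$ inside the internal path of the leg, and set $J=\{i:v_{k-1}^i\in S_i\}$. Counting via $e(S)=\sum_{v\in S}\deg_T(v)-e(T[S])$ and enumerating the components of $T[S]$ (when $\chi=1$, the center merges with one component from each leg in $J$) yields the key identity
\[
e(S)-|S|\;=\;\chi(s-1-|J|)+\sum_{i=1}^{s}\sigma_i.
\]

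The workhorse is the per-leg bound $\sigma_i\geq\frac{s-1}{|A\setminus R|}|S_i\cap A|$ (and the analogue for $B$). If $S_i=\emptyset$ the bound is trivial; otherwise $\sigma_i\geq 1$, while the right-hand side is at most $\frac{(s-1)\alpha}{s\alpha}=\frac{s-1}{s}<1$, where $\alpha$ is the number of internal $A$-vertices per leg (we use $|S_i\cap A|\leq\alpha$ and $|A\setminus R|\geq s\alpha$). Summing these per-leg bounds over $i\in[s]$ settles both endpoint inequalities in the case $\chi=0$.

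For $\chi=1$, the extra term $\chi(s-1-|J|)$ carries the argument. In the subcase $c\in A$, the $A$-inequality rearranges to
\[
\sum_{i=1}^s\bigl(\sigma_i-\mathbbm{1}[i\in J]\bigr)\;\geq\;\tfrac{s-1}{|A\setminus R|}\Bigl(\sum_{i=1}^s|S_i\cap A|\,-\,(|A\setminus R|-1)\Bigr),
\]
whose left side is non-negative (each $i\in J$ forces $\sigma_i\geq 1$) and whose right side is non-positive (the center is the sole $A$-vertex outside $\bigcup_i S_i$, so $\sum_i|S_i\cap A|\leq|A\setminus R|-1$). The $B$-inequality in this subcase reduces to the easier estimate $e(S)-|S|\geq s-1\geq\frac{s-1}{|B\setminus R|}|S\cap B|$ since $|S\cap B|\leq|B\setminus R|$. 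The subcase $c\in B$ is symmetric. The main technical subtlety is tracking the center through the $\chi(s-1-|J|)$ term; once the edge-count identity is in hand, everything else is routine.
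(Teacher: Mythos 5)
Your proof is correct, and the key identity
\[
e(S)-|S|\;=\;\chi(s-1-|J|)+\sum_{i=1}^s\sigma_i
\]
checks out (via $e(S)=\sum_{v\in S}\deg_T(v)-e(T[S])$ with $\sum_{v\in S}\deg_T(v)=2(|S|-\chi)+s\chi$ and $e(T[S])=\sum_i(|S_i|-\sigma_i)+\chi|J|$). You take a genuinely different route from the paper, though. The paper first observes that $e(S)$ and the cardinalities $|S\cap A|$, $|S\cap B|$ are additive over connected components of $T[S]$, so it suffices to treat connected $T[S]$; this collapses the argument into two trivial cases: if $S$ contains the center then $e(S)=|S|+s-1$ and both inequalities in \eqref{eq:balanced-conditions} hold since $|S\cap A|\leq|A\setminus R|$ and $|S\cap B|\leq|B\setminus R|$; if $S$ misses the center then $S$ lies in a single leg, $e(S)=|S|+1$, and a short computation bounds the right-hand side below $1$. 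You instead avoid the connectivity reduction and derive a global per-leg identity valid for arbitrary $S$, then handle the center's contribution via the $\chi(s-1-|J|)$ term and the observation that $i\in J$ forces $\sigma_i\geq 1$. Your approach is more work but also more uniform, and the per-leg bound $\sigma_i\geq\frac{s-1}{|A\setminus R|}|S_i\cap A|$ packages the leg-level estimate in a reusable form. Two small stylistic points: you overload the symbol $\alpha$ (it is already the balance parameter, and you also use it for the per-leg internal $A$-count); and your justification of the per-leg bound divides by the per-leg internal $A$-count, which is zero when $k=2$ (legs of length two) --- the inequality is trivially $0\leq\sigma_i$ there, but the written justification has a $0/0$. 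Neither affects correctness.
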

\begin{proof}
Let $R$ denote the set of leaves of $T$. Let $S\subseteq V(T)\setminus R$. It suffices to verify \eqref{eq:balanced-conditions}, which is equivalent to
\begin{equation} \label{eq:spider-balance}
e(S)\geq |S|+(s-1)\cdot \max\left\{ \frac{|S\cap A|}{|A\setminus R|} , \frac{|S\cap B|}{|B\setminus R|} \right\}.
\end{equation}
As before, we may assume that $T[S]$ is connected.
We consider two cases. In the first case, assume that $S$ contains the central vertex $w$ of $T$. In this case, we have  $e(S)=|S|+s-1$.
Since $|S\cap A|\leq |A\setminus R|$ and $|S\cap B|\leq |B\setminus R|$, we see that \eqref{eq:spider-balance} holds.

In the second case, assume that $w\notin S$. Since we assume $T[S]$ is connected, $S$ is contained in one leg of $T$ and  $e(S) = |S| + 1$.
 If $k$ is odd, then $|S \cap A| ,  |S \cap B| \leq  \frac{k - 1}{2}$ and  $|B \setminus R| = \frac{k - 1}{2}s + 1$ and $|A \setminus R| = \frac{k -1 }{2}s$. 
 It is straightforward to check that the right-hand side of \eqref{eq:spider-balance} is less than $|S|+1$. Hence, \eqref{eq:spider-balance} holds.
  If $k$ is even, then $|S \cap A|\leq \frac{k}{2}-1, |S \cap B|  \leq \frac{k}{2}, |A \setminus R| = \frac{k -2 }{2}s + 1$, and  $|B \setminus R| = \frac{k}{2}s$.  
Again,  it is straightforward to check that the right-hand side of \eqref{eq:spider-balance} is less than $|S|+1$. Hence, \eqref{eq:spider-balance} holds.
\end{proof}

\begin{theorem} \label{thm:kst-lower-odd}
Let $k\geq2$ be an integer. Let $\alpha\in [\frac{ks+1}{ks+s},1]$ be a rational number. Let $\ell$ be a smallest
integer such that $\frac{1}{(2k+1)s}\ell$ and $\frac{\alpha}{(2k+1)s}\ell$ are integers. 
There exist  integers $t_0, q_0$ such
that for every integer $t\geq t_0$ and prime power $q\geq q_0$ there exists a bipartite graph $G$ with parts $M,N$, where
$|M|=q^{\alpha\ell}, |N|=q^\ell$ such that $G$ is $K_{s,t}^{(2k+1)}$-free and 
$e(G)\geq \frac{1}{2}|M|^{1-\frac{k}{2k+1}} |N|^{1-\frac{k}{2k+1}-\frac{1}{(2k+1)s}}$.
\end{theorem}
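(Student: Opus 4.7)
The plan is to apply Lemma \ref{lem:dense-construction} to the family $\F = \{(T, A, B), (T, B, A)\}$, where $T = S_{s, 2k+1}$ is the vertex-labeled spider with $s$ legs, each of length $2k+1$, and $(A, B)$ is the bipartition of $T$ in which $A$ contains all $s$ leaves. The relevant parameters are $e(T) = (2k+1)s$, $|R| = s$, $|A \setminus R| = ks$, and $|B \setminus R| = ks + 1$, so in particular the choice of $\ell$ in the theorem matches the one demanded by the lemma.

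First I will confirm that both orientations of $T$ are $\alpha$-balanced throughout the stated range. By Lemma \ref{lem:spider-balance}, $T$ is $\alpha$-balanced relative to $(A, B)$ for every $\alpha \in I_T(A, B) = [\frac{ks+1}{ks+s}, \frac{ks+s-1}{ks}]$ and relative to $(B, A)$ for every $\alpha \in I_T(B, A) = [\frac{ks}{ks+s-1}, \frac{ks+s}{ks+1}]$. A direct comparison of endpoints gives $I_T(A, B) \cap I_T(B, A) = [\frac{ks+1}{ks+s}, \frac{ks+s}{ks+1}]$, which contains $[\frac{ks+1}{ks+s}, 1]$. Next I compute
\[
\rho_{(A,B)} = \frac{(1+\alpha)ks + 1}{(2k+1)s}, \qquad \rho_{(B,A)} = \frac{(1+\alpha)ks + \alpha}{(2k+1)s},
\]
and since $\alpha \leq 1$ the maximum is $\rho = \rho_{(A,B)}$. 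A routine simplification then yields
\[
\ell(1+\alpha-\rho) \;=\; \alpha\ell \cdot \tfrac{k+1}{2k+1} + \ell\bigl(\tfrac{k+1}{2k+1} - \tfrac{1}{(2k+1)s}\bigr),
\]
so the edge lower bound $\tfrac{1}{2} q^{\ell(1+\alpha-\rho)}$ delivered by Lemma \ref{lem:dense-construction} equals $\tfrac{1}{2} |M|^{1-\frac{k}{2k+1}} |N|^{1-\frac{k}{2k+1}-\frac{1}{(2k+1)s}}$ after substituting $|M| = q^{\alpha\ell}$ and $|N| = q^\ell$.

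Finally, to translate the avoidance of members of $\T^C(A, B) \cup \T^C(B, A)$ into $K_{s,t}^{(2k+1)}$-freeness, I will observe that $T^C(A, B)$ is precisely $K_{s, C}^{(2k+1)}$ with the $s$ shared leaves on the $A$-side and the $C$ spider centers on the $B$-side, and $T^C(B, A)$ is the same graph with the two sides swapped. For any $t \geq C$, any embedding of $K_{s,t}^{(2k+1)}$ into $G$ places the $s$-branching side either in $M$ or in $N$, and in either case one can select $C$ of the $t$ branching vertices on the opposite side to witness a copy of the appropriate orientation of $T^C$, so taking $t_0 = C$ forces $G$ to be $K_{s,t}^{(2k+1)}$-free. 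I do not anticipate a serious obstacle here. The one subtlety worth highlighting is that the lower endpoint $\frac{ks+1}{ks+s}$ on $\alpha$ is forced precisely by the $\alpha$-balance requirement for the orientation $(A, B)$, which is why the range of $\alpha$ in the theorem cannot be extended further to the left, and one must also verify that $\rho_{(A, B)} \geq \rho_{(B, A)}$ so that enlarging $\F$ to both orientations does not weaken the edge bound.
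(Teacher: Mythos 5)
Your proposal is correct and follows the same route as the paper: apply Lemma \ref{lem:dense-construction} to $\F=\{(T,A,B),(T,B,A)\}$ with $T=S_{s,2k+1}=K_{1,s}^{(2k+1)}$, use Lemma \ref{lem:spider-balance} to verify $\alpha$-balance on $[\frac{ks+1}{ks+s},1]$, take $\rho=\rho_{(A,B)}$ since $\alpha\le 1$, and simplify the exponent. The only difference is that you spell out the verification that forbidding $\T^C(A,B)\cup\T^C(B,A)$ (in both orientations of the host bipartition) yields $K_{s,t}^{(2k+1)}$-freeness for $t\ge C$, a point the paper delegates to the remark following Lemma \ref{lem:dense-construction}.
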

\begin{proof}
Let $T=K_{1,s}^{(2k+1)}$ and $(A,B)$ the ordered bipartition. It is easy to check that
$I_T(A,B)\cap I_T(B,A)\supseteq [\frac{ks+1}{ks+s},1]$. By Lemma \ref{lem:spider-balance},
$T$ is $\alpha$-balanced relative to $(A,B)$ and to $(B,A)$ for every $\alpha\in  [\frac{ks+1}{ks+s},1]$.
Applying Lemma \ref{lem:dense-construction} with $\F=\{ (K_{1,s}^{(2k+1)}, A,B),$
$ (K_{1,s}^{(2k+1)},B,A)\}$ and $\alpha\in  [\frac{ks+1}{ks+s},1]$, we
obtain a bipartite $G$ with parts $M,N$, where  $|M| =q^{\alpha \ell}, |N|=q^{\ell}$,
and $e(G)\geq \frac{1}{2} q^{\ell(1+\alpha-\rho)}$. 
Here, 
\[\rho=\max\{\frac{\alpha ks +ks+1}{(2k+1)s}, \frac{\alpha(ks+1)+ks}{(2k+1)s}\}=\frac{\alpha ks+ks+1}{(2k+1)s},\]
since $\alpha\leq 1$.
Substituting in $\rho$, $|M|, |N|$ and simplifying, we get 
$e(G)\geq \frac{1}{2}|M|^{1-\frac{k}{2k+1}} |N|^{1-\frac{k}{2k+1}-\frac{1}{(2k+1)s}}$.
\end{proof}
Note that our interval for $\alpha$ is optimal since $|M|\leq |N|$ and when $\alpha=\frac{ks+1}{ks+s}$, the lower bound matches the trivial lower bound of $\Omega(|N|)$.
\begin{theorem} \label{thm:kst-lower-even}
Let $k\geq1$ be an integer. Let $\alpha\in [\frac{ks}{ks+s-1},1]$ be a rational number. Let $\ell$ be a smallest
integer such that $\frac{1}{2ks}\ell$ and $\frac{\alpha}{2ks}\ell$ are integers. 
There exist  integers $t_0, q_0$ such
that for every integer $t\geq t_0$ and prime power $q\geq q_0$ there exists a bipartite graph $G$ with parts $M,N$, where
$|M|=q^{\alpha\ell}, |N|=q^\ell$ such that $G$ is $K_{s,t}^{2k}$-free and 
$e(G)\geq \frac{1}{2}|M|^{\frac{1}{2}+\frac{1}{2k}-\frac{1}{2ks}} |N|^{\frac{1}{2}}$.
\end{theorem}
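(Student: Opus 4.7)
The plan is to apply Lemma \ref{lem:dense-construction} to the family $\F = \{(T, A, B), (T, B, A)\}$, where $T = K_{1,s}^{(2k)}$ (a spider with $s$ legs of length $2k$) and $(A,B)$ is the ordered bipartition with $A$ containing the center of $T$. Since every leg has even length $2k$, both endpoints of every leg sit in $A$; hence the leaf set satisfies $R \subseteq A$ with $|A \setminus R| = s(k-1)+1$, $|B \setminus R| = sk$, and $e(T) = 2ks$.

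The first step is to verify that $T$ is $\alpha$-balanced relative to both orderings for $\alpha$ in the stated range. By Lemma \ref{lem:spider-balance}, it suffices to check $\alpha \in I_T(A,B) \cap I_T(B,A)$. A direct computation gives
\[
I_T(A,B) = \left[\tfrac{sk}{sk+s-1},\ \tfrac{sk}{sk-s+1}\right], \qquad I_T(B,A) = \left[\tfrac{sk-s+1}{sk},\ \tfrac{sk+s-1}{sk}\right],
\]
and one verifies $\tfrac{sk}{sk+s-1} \geq \tfrac{sk-s+1}{sk}$ (by cross-multiplying, $(sk)^2 \geq (sk-s+1)(sk+s-1)$), so the intersection contains $\bigl[\tfrac{sk}{sk+s-1}, 1\bigr]$, matching the hypothesis on $\alpha$.

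Next, I would compute the exponent $\rho$ from Lemma \ref{lem:dense-construction}. For the two oriented trees in $\F$, the ratios are
\[
\tfrac{\alpha(s(k-1)+1) + sk}{2ks} \quad \text{and} \quad \tfrac{\alpha sk + s(k-1)+1}{2ks}.
\]
Taking the difference shows the first is larger when $\alpha \leq 1$, so $\rho = \tfrac{\alpha(sk-s+1) + sk}{2ks}$. Then Lemma \ref{lem:dense-construction} yields, for some constant $C = C(k,s)$ and all prime powers $q \geq q_0$, a bipartite graph $G$ with $|M| = q^{\alpha\ell}$, $|N| = q^\ell$, at least $\tfrac{1}{2} q^{\ell(1+\alpha-\rho)}$ edges, and containing no member of $\T^C(A,B)$ with $A_H \subseteq M, B_H \subseteq N$, nor of $\T^C(B,A)$ with the reversed assignment. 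A short arithmetic check confirms
\[
1 + \alpha - \rho = \tfrac{1}{2} + \alpha\bigl(\tfrac{1}{2} + \tfrac{1}{2k} - \tfrac{1}{2ks}\bigr),
\]
which translates precisely to $\tfrac{1}{2}|M|^{\frac{1}{2}+\frac{1}{2k}-\frac{1}{2ks}}|N|^{\frac{1}{2}}$.

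Finally, I would argue that $G$ is $K_{s,t}^{(2k)}$-free for $t \geq t_0 := C$. The key observation is that $K_{s,t}^{(2k)}$ is exactly the union of $t$ copies of $K_{1,s}^{(2k)}$ sharing a common leaf vector of size $s$ (the $t$ branching vertices on one side serve as the centers, and the $s$ branching vertices on the other side as the common leaves). Hence any copy of $K_{s,t}^{(2k)}$ in $G$ with centers in $M$ and shared leaves in $N$ belongs to $\T^t(A,B)$ mapped accordingly (since the centers of $K_{1,s}^{(2k)}$ lie in $A$), and any copy with centers in $N$ and shared leaves in $M$ belongs to $\T^t(B,A)$; both are forbidden for $t \geq C$. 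I do not expect any substantial obstacle: the verification of $\alpha$-balancedness is already handled by Lemma \ref{lem:spider-balance}, and the rest is bookkeeping to match the stated exponent and to identify $K_{s,t}^{(2k)}$ as a member of $\T^t$.
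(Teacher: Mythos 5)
Your proposal is correct and follows the paper's argument essentially verbatim: identify $T=K_{1,s}^{(2k)}$ with the center-and-leaves side as $A$, verify $\alpha$-balancedness via Lemma~\ref{lem:spider-balance} and the computed maximal intervals, compute $\rho$ from the two orientations, apply Lemma~\ref{lem:dense-construction}, and simplify the exponent. One small slip in the final paragraph: since the legs have even length $2k$, the $t$ centers and the $s$ shared leaves of $K_{s,t}^{(2k)}$ lie in the \emph{same} part of its bipartition (all in $A$, as you yourself noted earlier), so the two cases to rule out are ``all branching vertices in $M$'' (handled by $(T,A,B)$) and ``all branching vertices in $N$'' (handled by $(T,B,A)$), not ``centers in $M$ and shared leaves in $N$'' and vice versa, which are vacuous; with that correction, the freeness argument is exactly right.
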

\begin{proof}
Let $T=K_{1,s}^{(2k)}$ and $(A,B)$ the ordered bipartition. It is easy to check that
$I_T(A,B)\cap I_T(B,A)\supseteq [\frac{ks}{ks+s-1},1]$. By Lemma \ref{lem:spider-balance},
$T$ is $\alpha$-balanced relative to $(A,B)$ and to $(B,A)$ for every $\alpha\in  [\frac{ks}{ks+s-1},1]$.
Applying Lemma \ref{lem:dense-construction} with $\F=\{ (K_{1,s}^{(2k)}, A,B),$
$ (K_{1,s}^{(2k)},B,A)\}$ and $\alpha\in [\frac{ks}{ks+s-1},1]$, we obtain a bipartite
$G$ with parts $M,N$, where $|M| =q^{\alpha \ell}, |N|=q^{\ell}$, and $e(G)\geq \frac{1}{2} q^{\ell(1+\alpha-\rho)}$. 
Here, 
\[\rho=\max\{\frac{\alpha [(k-1)s+1] +ks}{2k)s}, \frac{\alpha ks +(k-1)s+1}{2ks}\}=\frac{\alpha [(k-1)s+1]+ks}{2ks},\]
since $\alpha\leq 1$.
Substituting in $\rho, |M|, |N|$, and simplifying, we get 
$e(G)\geq \frac{1}{2}|M|^{\frac{1}{2}+\frac{1}{2k}-\frac{1}{2ks}} |N|^{\frac{1}{2}}$.
\end{proof}
Note that our interval for $\alpha$ is optimal since $|M|\leq |N|$ and when $\alpha=\frac{ks}{ks+s-1}$, the lower bound matches the trivial lower bound of $\Omega(|N|)$.
Theorem \ref{thm:kst-lower-even} shows that the upper bound on $K_{s,t}^{(2k)}$ in Theorem \ref{thm:main-kst} is asymptotically tight. 

\section{Concluding Remarks} \label{sec:conclusion}

\subsection{Weak regularization and error term}


Motivated by applications to incidence geometry, K\"odm\"on, Li, and Zeng \cite{KLZ} defined the {\it linear threshold} of a bipartite graph $H$ to be 
the smallest real $\sigma$ satisfying for any $\ve>0$ there exists $L$ such that every bipartite graph $G$ with an ordered bipartition $(M,N)$ with $|N|\geq \ve |M|^\sigma$ and $e(G)\geq L|N|$ must contain a copy of $H$. 
They showed that the linear threshold for $K_{s,t}^{(2)}$ is at most $2-1/s$, which is tight in the sense that for any $\sigma<2-1/s$, for sufficiently large $t$ the linear threshold of $K_{s,t}^{(2)}$ is larger than $\sigma$. Using the following lemma on a weaker notion of almost bi-regularity, their arguments can be used to prove the more general statement that  $\ex(m,n,K_{s,t}^{(2)})\leq O(m^{1-\frac{1}{2s}}n^{\frac{1}{2}}+n)$ for all $m\leq n$.
So this gives a better error term for $\ex(m,n,K_{s,t}^{(2)})$ than Theorem \ref{thm:main-kst} in the case of $K_{s,t}^{(2)}$.

  \begin{proposition} [Weak biregularization]\label{prop:weak-regularity}
       Let $c,\alpha, \beta, \ve$ be positive reals where $0\leq \alpha,\beta\leq 1$ and $\alpha + \beta > 1$.
       There exist constants $\lambda=\lambda(\alpha,\beta,\ve)$ and $\mu=\mu(\alpha,\beta,\ve)$ such that for every $L' \geq 16^{\frac{2}{\ve}}$ there exists an $L$ such that the following holds. 
       Let $G$ be a bipartite graph with an ordered bipartition $(M, N)$ with $|M| \leq |N|$ such that $e(G) \geq  c|M|^{\alpha}|N|^{\beta}$ and $d(G)\geq L$.
       Then $G$ contains a subgraph $G'$ with parts $M'\subseteq M, N'\subseteq N$ such that $e(G')\geq \lambda c 
       |M'|^\alpha |N'|^\beta$, $d(G')\geq  L'$ and that for any two vertices $x,y$ on the larger side of $(M,N)$, $d(x)\leq \mu d(y)$, and for any two vertices $x,y$ on the smaller side of $(M,N)$, $d(x)\leq d(y)^{1+\ve}$.
   \end{proposition}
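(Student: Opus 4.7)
The plan is to prove Proposition~\ref{prop:weak-regularity} in three stages: first, a minimum-degree reduction; second, a one-sided half-regularization via Lemma~\ref{lem:onesideregular}; third, a thick/thin analysis on polynomial-ratio buckets on the smaller side. Throughout, I would choose $L$ large enough (as a function of $L'$, $\alpha, \beta, \varepsilon$) so that the cumulative losses in $d(\cdot)$ still leave $d(G') \ge L'$ at the end.

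First, given $d(G) \ge L$, iteratively removing vertices of degree below $L/4$ preserves at least half the edges and leaves a subgraph $G_1$ with $\delta(G_1) \ge L/4$. Second, I would apply Lemma~\ref{lem:onesideregular} to $G_1$ to obtain $G_2$ with parts $M_2 \subseteq M, N_2 \subseteq N$ that is half-regular on the larger side; choosing $L$ large enough ensures the larger side is still $N_2$, so every $v \in N_2$ has a common degree $d_N$, which gives condition on the larger side automatically (even with $\mu = 1$ at this stage). Moreover $e(G_2) \ge C_1(\alpha,\beta)\, c\,|M_2|^\alpha |N_2|^\beta$.

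The bulk of the work lies in the third stage. Set $s_0 = L'$ and $s_{k+1} = s_k^{1+\varepsilon/2}$, and let $M_k = \{v \in M_2 : d_{G_2}(v) \in [s_k, s_{k+1})\}$. The hypothesis $L' \ge 16^{2/\varepsilon}$ forces $s_0^{\varepsilon/2} \ge 16$, so inside each bucket $M_k$ the polynomial regularity condition $d(x) \le d(y)^{1+\varepsilon}$ holds with slack. I would then carry out a thick/thin analysis in the spirit of Pyber's argument used in Theorem~\ref{thm:enhanced-regularization}: declare a bucket $M_k$ thin with respect to a threshold $\theta_k$ that decays super-polynomially in $s_k$ (a natural choice, obtained by plugging $|M_k|\le e_k/s_k$ into the target inequality, is $\theta_k \asymp (\lambda c)^{1/(1-\alpha)} s_k^{-\alpha/(1-\alpha)}|N_2|^{\beta/(1-\alpha)}$), chosen so that $e_k < \theta_k$ would already force the target density bound to fail on bucket $k$. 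Because $s_k$ grows doubly exponentially, the series $\sum_k \theta_k$ converges rapidly and is dominated by its first term up to a constant factor depending only on $\alpha, \beta, \varepsilon$. A judicious choice of $\lambda = \lambda(\alpha, \beta, \varepsilon)$ then makes $\sum_k \theta_k \le e(G_2)/2$, so not every bucket can be thin; a thick bucket $M_{k^*}$ witnesses $e_{k^*} \ge \lambda c |M_{k^*}|^\alpha |N_2|^\beta$ and yields polynomial regularity on $M' = M_{k^*}$ with $\delta_{M'} \ge s_0 \ge L'$. A short cleanup on the $N_2$-side (removing any vertex whose degree after restriction drops below a constant multiple of the average) restores $\mu$-almost-regularity on the larger side with $\mu$ depending only on $\alpha, \beta, \varepsilon$, at the cost of only a constant factor in edges.

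The main obstacle I anticipate is the calibration in the thick/thin step: one must choose the decay in $\theta_k$ so that the partial sums telescope geometrically and then extract from $\sum_k \theta_k \le e(G_2)/2$ a bound on $\lambda$ that depends only on $\alpha, \beta, \varepsilon$. This should be possible precisely because $\alpha + \beta > 1$ (letting the resulting inequality on $\lambda$ absorb the size-dependent factors via $|M_2| \le |N_2|$) and because $L' \ge 16^{2/\varepsilon}$ is large enough to render the leading term of the series negligible against the lower bound on $e(G_2)$ from Lemma~\ref{lem:onesideregular}. This is directly analogous to how the exponent $\varepsilon$ enables the geometric sum in Pyber's proof of Theorem~\ref{thm:enhanced-regularization}.
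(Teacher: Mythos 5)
There is a genuine gap in the proposed thick/thin step, and it stems from not making the case distinction that drives the paper's proof.

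Concretely, you propose bucketing the smaller side by $s_{k+1}=s_k^{1+\ve/2}$ with $s_0=L'$ and declaring $M_k$ thin when $e_k<\theta_k$ with $\theta_k \asymp (\lambda c)^{1/(1-\alpha)}s_k^{-\alpha/(1-\alpha)}|N_2|^{\beta/(1-\alpha)}$. The series is indeed dominated by $\theta_0\asymp(\lambda c)^{1/(1-\alpha)}(L')^{-\alpha/(1-\alpha)}|N_2|^{\beta/(1-\alpha)}$. But you only have $e(G_2)\geq \Omega\bigl(c|M_2|^\alpha|N_2|^\beta\bigr)$ available, so the desired bound $\sum_k\theta_k\leq e(G_2)/2$ requires roughly $|N_2|^{\beta\alpha/(1-\alpha)}\lesssim |M_2|^\alpha$, i.e.\ $|M_2|\gtrsim|N_2|^{\beta/(1-\alpha)}$. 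Since $\alpha+\beta>1$ forces $\beta/(1-\alpha)>1$, this contradicts $|M_2|\leq|N_2|$; the inequality fails for every sufficiently unbalanced instance, no matter how small $\lambda$ is chosen. Put differently, because your buckets start at the absolute constant $s_0=L'$, the number of buckets is $\Theta(\log\log n)$, and neither a direct pigeonhole nor your thick/thin calibration can absorb that when $|M_2|$ is polynomially smaller than $|N_2|$. The sentence claiming $\alpha+\beta>1$ ``lets the inequality on $\lambda$ absorb the size-dependent factors via $|M_2|\leq|N_2|$'' has the sign backwards: that hypothesis makes the offending exponent positive, not zero.

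The paper circumvents this with a case split that your proposal never makes. If $e(G_0)\gtrsim|N_0|^{1+\gamma}$ with $\gamma=(\alpha+\beta-1)/2$, then $d(G_0)$ is already a positive power of $|N_0|$, so one can afford the $\log|M_0|$ loss of the full biregularization theorem (\Cref{thm:biregular}): the output is $16$-almost-biregular with $d(G')\geq L'$, and $L'\geq 16^{2/\ve}$ upgrades $16$-almost-regularity on each side to the polynomial condition $d(x)\leq d(y)^{1+\ve}$. In the complementary case, one shows $|M_0|\leq O(|N_0|^{(1+\alpha-\beta)/(2\alpha)})$, hence $d_{M_0}\geq\Omega(|N_0|^{(\alpha+\beta-1)/(2\alpha)})$; after deleting vertices of degree below $d_{M_0}/4$ (a threshold that is \emph{polynomial} in $|N_0|$, not constant), the degree ratio on the $M$-side is bounded by a fixed power of $|N_0|$, so only $p=\bigl\lceil\log_{1+\ve/2}\bigl(2\alpha/(\alpha+\beta-1)\bigr)\bigr\rceil=O(1)$ polynomial-ratio buckets are needed and a plain pigeonhole suffices, with no thick/thin argument. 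Your cleanup to constant minimum degree $L/4$ misses this, and the thick/thin analysis cannot substitute for it.

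Two smaller issues: the exponent $1/(1-\alpha)$ is undefined at $\alpha=1$ (allowed by the hypotheses), and the final ``short cleanup on the $N_2$-side'' to restore almost-regularity on the large side is unnecessary because Lemma~\ref{lem:onesideregular} already leaves the large side exactly half-regular, so the $N$-side degree ratio after passing to the selected $M$-bucket is controlled directly by the bucket cleanup.
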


   \begin{proof}  
     Let $\lambda_0=  \frac{1}{2^{2+\frac{1}{\alpha}+\frac{1}{\beta}}}$.
   By Lemma \ref{lem:onesideregular}, $G$  contains a subgraph $G_0$ with parts $M_0\subseteq M, N_0\subseteq N$ such that $e(G_0)\geq \lambda_0 c|M_0|^\alpha|N_0|^\beta$, $G_0$ is half regular at the larger side of $(M_0,N_0)$, and $d(G_0)\geq \frac{d(G)}{8}$.  Without loss of generality, suppose $N_0$ is the larger side. (The case of $M_0$ being the larger side is essentially identical.) 
   
   Let $\gamma=\frac{1}{2}(\alpha+\beta-1)$. First, suppose that $\lambda_0 c|M_0|^\alpha |N_0|^\beta> |N_0|^{1+\gamma}$.
   Then $d(G_0)\geq \frac{|N_0|^\gamma}{\lambda_0 c}$. Let $\lambda_1 =\lambda(\alpha,\beta)$ be given as in Theorem \ref{thm:biregular} for the given $\alpha,\beta$. By Theorem \ref{thm:biregular}, $G_0$ contains a $16$-almost-biregular subgraph $G'$ with parts $M'\subseteq M_0, N'\subseteq N_0$ such that $e(G')\geq \lambda \lambda_0 c|M'|^\alpha |N'|^\beta$
  and $d(G')\geq \frac{d(G_0)}{128 \log |M_0|}\geq \frac{|N_0|^\gamma}{\lambda_0 c (128\log |M_0|)}\geq L'$, if $|N_0|$ is sufficiently large, which can be guaranteed by making $L$ sufficiently large as a function of $\alpha,\beta, c,L'$. Now, since $d(G')\geq 16^{\frac{2}{\ve}}$, we have $\delta(G')\geq 16^{\frac{1}{\ve}}$, and hence $d(y)\leq 16 d(x)$ implies $d(y)\leq d(x)^{1+\ve}$. So the claim holds
  with $\lambda=\lambda_0 \lambda_1$ and $\mu=16$.

  Hence, we may assume that $\lambda_0 c|M_0|^\alpha|N_0|^\beta \leq |N_0|^{1+\gamma}$. 
  Recall that $d(G_0)\geq \frac{d(G)}{8}\geq \frac{L}{8}$, which implies $e(G_0)\geq \frac{L}{16}|N_0|$. This implies that $|M_0|\leq \left(\frac{1}{\lambda_0c}\right)^{\frac{1}{\alpha}}|N_0|^{\frac{1+\gamma-\beta}{\alpha}}$. 
   Let $d_{M_0}=\frac{e(G_0)}{|M_0|}$ and $d_{N_0}=\frac{e(G_0)}{|N_0|}$. By our assumption, $G_0$ is $d_{N_0}$-half regular at $N_0$. Also, $d_{M_0}\geq \frac{e(G_0)}{|M_0|}\geq \frac{L|N_0|}{16|M_0|}\geq \frac{1}{16}L(\lambda_0 c)^{\frac{1}{\alpha}}|N_0|^{1 - \frac{\alpha + 1 - \beta}{2\alpha }} \geq 4 |N_0|^{\frac{\alpha + \beta -  1 }{2\alpha }}$ by choosing $L$ large enough. Let $G_1$ be obtained from $G_0$ by iteratively deleting vertices in $M_0$ whose vertex degrees become less than 
   $\frac{1}{4}d_{M_0}$ and vertices in $N_0$ whose vertex degrees become less than $\frac{1}{4} d_{N_0}$. Then $e(G_1)\geq \frac{1}{2} e(G_0 )$. Furthermore, each vertex in $M_1:=V(G_1)\cap M$ has degree at least $\frac{1}{4} d_{M_0}\geq |N_0|^{\frac{\alpha + \beta - 1}{2 \alpha}}$ in $G_1$ and each vertex in $N_1:=V(G_1)\cap N$ has degree at least $\frac{1}{4} d_{N_0}$ in $G_1$. Let $D=|N_0|^{\frac{\alpha + \beta - 1}{2 \alpha}}$ and $p=\ceil{\log_{1 + \frac{\ve}{2}}(\frac{2\alpha}{\alpha + \beta - 1})}$. For each $i\in [p]$, let $A_i=\{x\in M_1: D^{(1+\frac{\ve}{2})^{i-1}}\leq  d_{G_1}(x) < D^{(1+\frac{\ve}{2})^i}\}$.
   Then $A_1,\dots A_p$ paritition $M_1$. By the pigeonhole principle, for some $i\in [p]$, the subgraph $G_2$ of $G_1$ induced by $A_i\cup N_1$ has at least $\frac{1}{p} e(G_1)\geq \frac{1}{2p}e(G_0)$ edges. Starting with $G_2$, by iteratively deleting any vertex in $A_i$ whose degree becomes less than $\frac{1}{4} \frac{e(G_2)}{|A_i|}$ and any vertex in $N_1$ whose degree becomes less than $\frac{1}{4} \frac{e(G_2)}{|N_1|}$, we obtain a subgraph $G'$ with parts $M'\subseteq A_i$ and $N'\subseteq N_1$ such that $e(G')\geq \frac{1}{2}e(G_2)\geq 
   \frac{1}{4p} e(G_0)\geq \frac{\lambda_0}{4p} c |M_0|^\alpha |N_0|^\beta\geq \frac{\lambda_0}{4p} c |M'|^\alpha |N'|^\beta$.
   Also, by our deletion rule, $\delta(G') \geq \frac{1}{8}d(G_2)\geq \frac{1}{16p} d(G_0)\geq \frac{1}{128p}L$, and so by choosing $L$ large enough, it will be larger than $L'$. Since $L' \geq 16^{\frac{2}{\ve}}$,  for any $x,y\in M'$, $d_{G'}(y)\leq 4d_{G'}^{1+\frac{\ve}{2}}(x) \leq d_{G'}^{1+\ve}(x)$, and for any $x,y\in N'$, $d_{G'}(x)\leq 4p d_{G'} (y)$.
   So the claim holds with $\lambda=\frac{\lambda_0}{4p}$ and $\mu=4p=4 \ceil{\log_{1 + \frac{\ve}{2}}(\frac{2\alpha}{\alpha + \beta - 1})}$.
\end{proof}

For our proofs of Theorem~\ref{thm:main-kst} and Theorem~\ref{thm:main-kp} to go through for $k \geq 2$, we needed the stronger version of biregularity.  
One natural question is whether the 
error term of $O(n \log m)$ in Theorem \ref{thm:main-kst} and Theorem \ref{thm:main-kp} can be improved to $O(n)$, which, if true, would be best possible.

\subsection{Odd subdivisions of $K_{s,t}$}

Our method does not quite allow us to establish good bounds on $\ex(m,n,K_{s,t}^{(k)}(r))$ when $k$ is odd. The main obstacle is that we are only able to prove Lemma \ref{lem:2pathsodd} for $2$-paths that start and end in $N$. It does not appear that $2$-paths that start and end in $M$ can be effectively used to build $K_{s,t}^{(k)}$ for odd $k$ when $|M|$ is much smaller than $|N|$. It is nevertheless natural to ask 

\begin{question}
    Let $k\geq 3$ be an odd integer. Is it true that for all positive integers $m\leq n$,  $\ex(m,n,K_{s,t}^{(k)}(r))\leq O(m^{\frac{1}{2}+\frac{1}{2k}} n^{\frac{1}{2} + \frac{1}{2k}}+n\log m)$ and 
    $\ex(m,n,K_{s,t}^{(k)})\leq O(m^{\frac{1}{2}+\frac{1}{2k}} n^{\frac{1}{2} + \frac{1}{2k}-\frac{1}{ks}}+n\log m)$?   
\end{question}

As shown in Theorem~\ref{thm:theta-lower-odd} and Theorem~\ref{thm:kst-lower-odd}, such bounds would be tight up to a constant for infinitely many pairs $m, n$.

\end{document}